\documentclass[12pt]{amsart}

\usepackage[
text={440pt,575pt},
headheight=9pt,
centering
]{geometry}

\usepackage{amsmath, amssymb, amsthm, enumerate, bm, enumitem}
\allowdisplaybreaks % Exactly what is says on the tin.
\usepackage{mathtools}

\usepackage{graphicx}
\usepackage{subcaption}
\usepackage[normalem]{ulem}

\usepackage[all]{xy}

\usepackage{multirow}

\usepackage{mathptmx}

\usepackage{xcolor}
\usepackage{hyperref}
\definecolor{darkblue}{RGB}{0,0,160}
\hypersetup{
	colorlinks,%
	citecolor=darkblue,%
	filecolor=black,%
	linkcolor=darkblue,%
	urlcolor=darkblue
}

\usepackage[capitalise]{cleveref}
\crefname{claim}{claim}{claims}

\usepackage{verbatim}

\makeatletter
\newcommand{\nolisttopbreak}{\vspace{\topsep}\nobreak\@afterheading}
\makeatother

\theoremstyle{definition}
\newtheorem{definition}{Definition}[section]

\newtheorem{theorem}[definition]{Theorem}
\newtheorem{proposition}[definition]{Proposition}
\newtheorem{lemma}[definition]{Lemma}
\newtheorem{corollary}[definition]{Corollary}

\newtheorem*{fact*}{Fact}
\newtheorem{remark}[definition]{Remark}
\newtheorem{example}[definition]{Example}

\newtheorem{problem}[definition]{Problem}

\newcommand{\C}{\mathcal{C}}

\newcommand{\E}{\mathcal{E}}

\newcommand{\Hc}{\mathcal{H}}

\newcommand{\M}{\mathcal{M}}

\newcommand{\NN}{\mathbb{N}}
\newcommand{\Nk}{\mathfrak{N}}

\newcommand{\OO}{\mathbb{O}}
\newcommand{\RR}{\mathbb{R}}

\newcommand{\Pk}{\mathfrak{P}}

\newcommand{\ZZ}{\mathbb{Z}}

\newcommand{\Zk}{\mathfrak{Z}}

\newcommand\nub{{\boldsymbol 0}}
\newcommand\ab{{\boldsymbol a}}
\newcommand\bb{{\boldsymbol b}}

\newcommand\eb{{\boldsymbol e}}
\newcommand\epb{{\bm \epsilon}}

\newcommand\ub{{\boldsymbol u}}
\newcommand\vb{{\boldsymbol v}}
\newcommand\wb{{\boldsymbol w}}
\newcommand\xb{{\boldsymbol x}}
\newcommand\yb{{\boldsymbol y}}
\newcommand\zb{{\boldsymbol z}}

\DeclareMathOperator{\mndcl}{mn}

\DeclareMathOperator{\Sym}{Sym}

\DeclareMathOperator{\ind}{ind}

\DeclareMathOperator{\irr}{irr}

\DeclareMathOperator{\supp}{supp}

\DeclareMathOperator{\cone}{cone}

\DeclareMathOperator{\gp}{gp}

\DeclareMathOperator{\lin}{lin}

\DeclareMathOperator{\re}{red}

\DeclareMathOperator{\Hom}{Hom}
\DeclareMathOperator{\unit}{U}

\newcommand\defas{\coloneqq}

\makeatletter
\@namedef{subjclassname@2020}{%
	\textup{2020} Mathematics Subject Classification}
\makeatother

\title{
	Duality of monoids up to symmetry
}
\author[D. V. Le]{Dinh Van Le}
\address{Department of Mathematics, FPT University, Hanoi, Vietnam}
\email{dinhlv2@fe.edu.vn}

\author[T. R\"{o}mer]{Tim R\"{o}mer}
\address{Institut f\"ur Mathematik, Universit\"at Osnabr\"uck, 49069 Osnabr\"uck, Germany}
\email{troemer@uos.de}

\author[N. T. Vien]{Nguyen Thi Vien}
\address{Institut f\"ur Mathematik, Universit\"at Osnabr\"uck, 49069 Osnabr\"uck, Germany}
\email{thi.vien.nguyen@uni-osnabrueck.de}
\subjclass[2020]{Primary: 05E18; Secondary: 20B30, 20M30, 52B99}
\keywords{monoid, cone, equivariant, symmetric group}

\begin{document}
	
	\begin{abstract}
		We study duality for monoids in $\ZZ^{(\NN)}$ that are invariant under the action of the infinite symmetric group $\Sym$. Our main result is an equivariant Minkowski--Weyl theorem for monoids. More precisely, we analyze the evolution of dual monoids along stabilizing $\Sym$-invariant chains and describe the eventual behavior of their equivariant Hilbert bases. 
		In addition, we develop a systematic study of structural properties of dual symmetric monoids, including a characterization of the duals of positive and non-positive monoids.
	\end{abstract}
	\maketitle

	\section{Introduction}
	
	The classical Minkowski--Weyl theorem is a cornerstone of convex geometry. It provides a dual characterization of polyhedral cones, identifying them both as conical hulls of finite sets and as intersections of finitely many linear halfspaces. A fundamental consequence is that the dual of a finitely generated cone is again finitely generated. While this theory is well understood in finite dimensions, its extension to objects \emph{up to symmetry}, that is, invariant under the infinite symmetric group $\Sym$, has only recently begun to take shape.
	
	A general framework for studying symmetric cones and monoids was introduced in \cite{KLR22}. Within this setting, equivariant analogues of the Minkowski--Weyl theorem were established for nonnegative cones in \cite{LR23}, and later in full generality in \cite{L25}. These developments are motivated by and connected to several active areas of research, including representation theory \cite{CEF15,CF13,NR19,SS17}, algebraic statistics \cite{AHT12,AH07,HM13,HS12}, and the theory of symmetric ideals in infinite-dimensional polynomial rings (see, e.g., \cite{D14,D22,JLR20,LNNR20,LNNR21,LR24,NR17}).
	
	Building on the systematic study of symmetric monoids in \cite{LRV26}, the present paper develops a duality theory for monoids up to symmetry. Our aim is to extend the duality theory for symmetric cones developed in \cite{L25,LR23} to the discrete setting of monoids, and in particular to establish equivariant analogues of the Minkowski--Weyl theorem in this context. This leads to the following guiding problem:
	
	\begin{problem}
		\label{pb:duality}
		Let $\ZZ^{(\NN)}$ denote the free abelian group with basis $\NN$, and let $\Theta$ be a subgroup of $\Sym$. Given a $\Theta$-invariant monoid $M \subseteq \ZZ^{(\NN)}$, describe its dual monoid $M^*$.
	\end{problem}
	
	We focus on two natural instances of this problem: the \emph{local} case, where $\Theta=\Sym(n)$ and $M\subseteq \ZZ^n$, and the \emph{global} case, where $\Theta=\Sym$ and $M\subseteq \ZZ^{(\NN)}$.

	We begin by exploring structural properties of $M^*$, such as normality and positivity. We show that $M^*$ is intimately related to the dual of the cone generated by $M$, and in particular is always normal (\Cref{prop:local-dualproperties,prop:global-dualproperties}). We also give an explicit description of $M^*$ when $M$ is non-positive (\Cref{duality: local non-positive,duality: global non-positive}). In the positive case, although the positivity of $M^*$ can be characterized (\Cref{duality: local positive,duality: global positive}), obtaining explicit generating sets becomes substantially more challenging.
	
	A key difficulty arises in the global setting, where the dual monoid $M^*$ is typically too large to handle directly (\Cref{M-W:global}). To circumvent this, we employ the local--global principles developed in \cite{KLR22, L25, LRV26}. Specifically, we associate to a $\Sym$-invariant monoid $M \subseteq \ZZ^{(\NN)}$ a chain of local monoids $\M=(M_n)_{n\ge 1}$, where $M_n = M \cap \ZZ^n$ is $\Sym(n)$-invariant. This reduces the study of $M^*$ to the asymptotic behavior of the sequence $(M_n^*)_{n\ge 1}$, leading to the central problem of the paper:
	
	\begin{problem}
		Let $\M=(M_n)_{n\ge1}$ be a $\Sym$-invariant chain of monoids. Describe the evolution of generating sets, and in particular Hilbert bases, of $M_n^*$ as $n$ increases.
	\end{problem}

	Our main results provide a solution to this problem. More precisely, we introduce an insertion operator that propagates a generating set of $M_t^*$ to generating sets of $M_n^*$ for all $n>t$, provided that $t$ is sufficiently large (\Cref{M-W:local general}). This yields an equivariant analogue of the Minkowski--Weyl theorem for symmetric monoids. Although analogous in spirit to the cone case \cite{L25, LR23}, the proof requires new ideas, reflecting the more intricate discrete structure of monoids.
	
	When the chain $\M$ consists of positive monoids, we show that the sequence of  dual monoids $\M^*=(M_n^*)_{n\ge 1}$ is eventually positive (\Cref{duality:chain of positive monoids}). Crucially, the equivariant Hilbert bases of $M_n^*$ admit an explicit description for all sufficiently large $n$ (\Cref{M-W:local 2r}). A key ingredient is that the insertion operator eventually preserves both irreducibility and reducibility of elements (\Cref{lem:irreducibility-presevation,lem:reducibility-presevation}).
	
	This work contributes to the broader program of developing a comprehensive theory of polyhedral geometry up to symmetry, extending classical finite-dimensional results to infinite-dimensional and equivariant settings.
	
	Let us now outline the structure of the paper. \Cref{sec:preliminaries} reviews the necessary background on symmetric groups and $\Sym$-invariant chains of monoids. In \Cref{sec:dual}, we introduce dual monoids and establish their basic properties. \Cref{sec:dualmonoids} studies duals of symmetric monoids, including positivity and the classification of non-positive cases. The equivariant Minkowski--Weyl theorem is proved in \Cref{sec:M-W}. Finally, \Cref{sec:equi-Hilbert-bases} provides explicit descriptions of equivariant Hilbert bases for dual sequences of chains of positive monoids.
	%-------------------------------------------------------
	\section*{Acknowledgements}
	The first author was supported by the Vietnam National Foundation for Science and Technology Development (NAFOSTED) under the grant number 101.04-2025.49. The second and third author were supported by the SPP 2458 “Combinatorial Synergies”, funded by the German Research Foundation under the grant number GZ: RO 2504/6-1; AOBJ: 704082. 
	
	%-------------------------------------------------------
	\section{Symmetric monoids}\label{sec:preliminaries}
	
	In this section we recall basic notions concerning symmetric groups and invariant chains of monoids. Our terminology follows \cite{KLR22,LR23}.
	
	Let $\NN$ denote the set of positive integers. Consider the \emph{Baer--Specker group} $\ZZ^\NN=\prod_\NN\ZZ$, which is the Cartesian power of $\ZZ$ indexed by $\NN$. As shown by Baer \cite{B37}, this abelian group is not free. It has the same cardinality as $\RR$ and is thus uncountable. Let
	\[
	\ZZ^{(\NN)}=\bigoplus_\NN\ZZ
	\]
	be the subgroup of $\ZZ^\NN$ consisting of elements with finite support, where the \emph{support} of an element $\ub=(u_i)_{i\in \NN}\in\ZZ^{\NN}$ is defined as
	\[
	\supp(\ub)=\{i\in\NN\mid u_i\ne 0\}.
	\]
	The group $\ZZ^{(\NN)}$ is free with basis $\{\eb_i\}_{i\in\NN}$, where $\eb_i$ denotes the $i$-th standard unit vector. Every element $\ub\in\ZZ^{(\NN)}$ therefore admits a unique representation $\ub=\sum_{i\in \NN}u_i\eb_i$ with $u_i\in\ZZ$ and $u_i=0$ for all but finitely many $i$. For such $\ub$, we define its \emph{coordinate sum}
	\[
	s(\ub)=\sum_{i\in\NN}u_i.
	\]
	
	For each $n\in\NN$, we identify $\ZZ^n$ with the subgroup of $\ZZ^{(\NN)}$ generated by $\{\eb_1,\dots,\eb_n\}$ via the inclusion  $(u_1,\dots,u_n)\mapsto(u_1,\dots,u_n,0,0,\dots)$. This identification yields an ascending chain of subgroups  
	\[
	\ZZ\subset \ZZ^2\subset\cdots\subset\ZZ^n\subset\cdots,
	\]
	such that $\ZZ^{(\NN)}=\bigcup_{n\ge1}\ZZ^n$.
	
	The real vector spaces $\RR^\NN$ and $\RR^{(\NN)}$ are defined analogously. In particular, the standard unit vectors $\{\eb_i\}_{i\in\NN}$ form a basis of $\RR^{(\NN)}$.
	
	\subsection{Symmetric group actions}
	
	For each $n\in\NN$, denote by $\Sym(n)$ the symmetric group on $[n]=\{1,\dots,n\}$. Viewing $\Sym(n)$ as the stabilizer of $n+1$ within $\Sym(n+1)$, we obtain an ascending chain
	\[
	\Sym(1)\subset \Sym(2)\subset \cdots \subset \Sym(n)\subset\cdots .
	\]
	Its union
	\[
	\Sym=\bigcup_{n\ge1}\Sym(n)
	\]
	consists precisely of those permutations of $\NN$ that move only finitely many elements.
	
	The group $\Sym$ acts naturally on $\RR^\NN$ via coordinate permutation:
	\[
	\sigma(\ub)=\big(u_{\sigma^{-1}(1)},u_{\sigma^{-1}(2)},\dots\big)
	\quad\text{for  }\ \sigma\in \Sym\ \text{ and }\ 
	\ub=(u_i)_{i\in \NN}\in\RR^\NN.
	\]
	In particular, $\sigma(\eb_i)=\eb_{\sigma(i)}$ for all $i\in\NN$. This action restricts to the subgroups $\RR^{(\NN)}$, $\ZZ^{\NN}$, and $\ZZ^{(\NN)}$, and induces the standard action of $\Sym(n)$ on $\RR^n$ and $\ZZ^n$ for each $n\ge1$.
	
	Let $\Theta$ be a subgroup of $\Sym$. A subset $A\subseteq\RR^{\NN}$ is called \emph{$\Theta$-invariant} if $\Theta(A)\subseteq A$, where
	\[
	\Theta(A)=\{\sigma(\ub) \mid \sigma \in \Theta,\; \ub \in A\}.
	\]
	denotes the $\Theta$-orbit of $A$. In the present work, we primarily focus on the cases $\Theta=\Sym$ and $\Theta=\Sym(n)$ for some $n\ge1$. 
	
	An ascending chain 
	$$A_1 \subset A_2 \subset \cdots \subset A_n \subset \cdots$$ 
	with $A_n \subseteq \RR^n$ is called $\Sym$-\emph{invariant} if each $A_n$ is $\Sym(n)$-invariant. The union $A = \bigcup_{n \ge 1} A_n$ of such a chain is evidently $\Sym$-invariant. Conversely, given a  $\Sym$-invariant set $A \subseteq \RR^{\NN}$, its truncations
	$$A \cap \RR \subset A \cap \RR^2 \subset \cdots \subset A \cap \RR^n \subset \cdots$$
	forms a $\Sym$-invariant chain. 
	
	As in \cite{KLR22,LR23}, we employ the term \emph{global} to refer to subsets of $\RR^{\NN},$ $\RR^{(\NN)},$ $\ZZ^{\NN},$ or $\ZZ^{(\NN)}$, whereas subsets of $\RR^n$ or $\ZZ^n$ are referred to as \emph{local}.
	
	\subsection{Monoids}
	Throughout the paper, a \emph{monoid} is a submonoid of $\ZZ^{\NN}$, that is, a subset closed under addition and containing the zero element $\nub$. Given a subset $A\subseteq \ZZ^{\NN}$, the monoid \emph{generated} by $A$ is defined as
	\[
	\mndcl(A)=\ZZ_{\geq 0} A\defas\left\{\sum_{i=1}^k m_i \ab_i \mid \ab_i \in A, m_i \in \ZZ_{\geq 0}, k \in \NN\right\}.
	\]
	A monoid admitting a finite generating set is called \emph{affine} or \emph{finitely generated}.

	Let $M\subseteq N$ be monoids. The \emph{saturation} of $M$ in $N$ is the following submonoid of $N$:
	$$\widehat{M}_N=\left\{u \in N \mid k \ub \in M \text { for some } k \in \NN\right\}.$$ 
	We say that $M$ is \emph{saturated} in $N$ if $M=\widehat{M}_N.$ Evidently, if $M$ is saturated in $N$, then it is also saturated in any monoid $N'$ with $M\subseteq N'\subseteq N$. The \emph{normalization} $\widehat{M}$ of $M$ is the saturation of $M$ in 
	\[
	\gp(M)=\ZZ M=\{\ub-\vb\mid \ub,\vb \in M\},
	\]
	the group generated by $M$, and $M$ is called \emph{normal} if $\widehat{M}=M$. The terminology stems from commutative algebra (see \cite[Section 4.E]{BG09}) and admits a geometric interpretation via cones. Recall that the \emph{cone} generated by a subset $A\subseteq \RR^{\NN}$
	is defined as
	\[
	\cone(A) =
	\RR_{\geq 0}A
	\defas
	\Big\{\sum_{i=1}^k\lambda_i\ab_i\mid\ab_i\in A,\
	\lambda_i\in\RR_{\geq 0}, \  k\in\NN\Big\}
	\subseteq\RR^{\NN}.
	\]
	
	\begin{lemma}
		\label{lem:Gordan}
		Let $M\subseteq N$ be monoids in $\ZZ^{\NN}$. Then
		\[
		\widehat{M}_N=\cone(M)\cap N.
		\]
		Moreover, if $N$ is an affine monoid in $\ZZ^n$ for some $n\ge 1$ and $\cone(M)$ is a finitely generated cone, then $\widehat{M}_N$ is also an affine monoid.
	\end{lemma}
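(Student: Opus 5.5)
The plan is to prove the two inclusions of the equality directly, and then to obtain the finiteness statement from Gordan's lemma by pulling back along a presentation of $N$.

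\textbf{The inclusion $\widehat{M}_N\subseteq\cone(M)\cap N$.} Let $\ub\in\widehat{M}_N$. Then $\ub\in N$ and $k\ub\in M$ for some $k\in\NN$. Hence $\ub=\tfrac1k(k\ub)\in\cone(M)$.

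\textbf{The inclusion $\cone(M)\cap N\subseteq\widehat{M}_N$.} Let $\ub\in N$ with $\ub=\sum_{i=1}^k\lambda_i\ab_i$, where $\ab_i\in M$ and $\lambda_i\in\RR_{\ge0}$. The key step is to replace the $\lambda_i$ by nonnegative rationals.
\begin{itemize}
\item The vectors $\ab_1,\dots,\ab_k,\ub$ lie in $\ZZ^{\NN}$ and may have infinite support. However, they span a finite-dimensional subspace $V$ of $\RR^{\NN}$.
\item We can therefore choose finitely many coordinates $j_1,\dots,j_m$ such that the projection $\pi\colon V\to\RR^m$ onto these coordinates is injective. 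Concretely, a basis of $V$ has linearly independent images under a suitable finite set of coordinates.
\item For $\lambda\in\RR^k$ we have $\sum_i\lambda_i\ab_i\in V$, so this vector equals $\ub$ if and only if $\sum\lambda_i\pi(\ab_i)=\pi(\ub)$.
\item Thus the set of admissible $\lambda$ is the polyhedron $P=\{\lambda\in\RR^k_{\ge0}\mid A\lambda=\pi(\ub)\}$, where $A$ is an integer matrix. It is nonempty because the given $\lambda$ lies in it.
\item $P$ is a pointed polyhedron defined over $\QQ$ (here $\QQ$ is the rationals), so it has a vertex, and that vertex is rational. Alternatively, rational points are dense in a nonempty rational polyhedron.
\end{itemize}
We thus obtain $\lambda\in\QQ^k_{\ge0}$ with $\ub=\sum_i\lambda_i\ab_i$. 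Let $k'$ be a common denominator. Then $k'\ub=\sum_i(k'\lambda_i)\ab_i\in M$, with $k'\lambda_i\in\ZZ_{\ge0}$. Since $\ub\in N$, it follows that $\ub\in\widehat{M}_N$. The only delicate point in the whole proof is this reduction to finitely many coordinates.

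\textbf{The finiteness statement.} Now assume $N\subseteq\ZZ^n$ is affine and $C=\cone(M)$ is finitely generated.
\begin{enumerate}
\item First, $C$ is rational. Each of its finitely many generators is a finite nonnegative combination of elements of $M$. Collecting the elements of $M$ that occur gives a finite set $B\subseteq M\subseteq\ZZ^n$ with $\cone(B)=C$. So $C$ is a rational polyhedral cone.
\item Let $\bm n_1,\dots,\bm n_r$ generate $N$, and let $\varphi\colon\RR^r\to\RR^n$ be the linear map $x\mapsto\sum_jx_j\bm n_j$. Then $\varphi^{-1}(C)\cap\RR^r_{\ge0}$ is a rational polyhedral cone.
\item By Gordan's lemma, its set of lattice points $Q=\varphi^{-1}(C)\cap\ZZ^r_{\ge0}$ is an affine monoid.
\item Finally, $\varphi(Q)=C\cap N$. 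Indeed, every element of $N$ has the form $\varphi(x)$ with $x\in\ZZ^r_{\ge0}$, and such an element lies in $C$ exactly when $x\in Q$.
\end{enumerate}
Hence $\widehat{M}_N=C\cap N$ is generated by the images of the finitely many generators of $Q$, and so it is affine.
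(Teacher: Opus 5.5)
Your proof is correct. The paper gives no argument of its own here: it cites \cite{BG09} for the case $N\subseteq\ZZ^n$ and asserts that the same argument extends verbatim to $\ZZ^{\NN}$. Your proposal is a self-contained version of the standard argument, using rational coefficients for the equality and, for affinity, pulling back along $\ZZ_{\ge 0}^r\to N$ and applying Gordan's lemma. Two of your steps make explicit what the paper leaves implicit. First, the projection onto finitely many coordinates is exactly what justifies the ``verbatim'' extension to vectors of infinite support. Second, your observation that a finitely generated $\cone(M)$ is already generated by finitely many elements of $M$ supplies the rationality that Gordan's lemma needs.
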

	
	\begin{proof}
		The second statement is a slight generalization of the classical Gordan's lemma; see \cite[Lemma~2.9 and Proposition~2.22]{BG09}. The equality $\widehat{M}_N=\cone(M)\cap N$ is proved in \cite[Proposition~2.22]{BG09} when $N\subseteq\ZZ^n$. The same argument extends verbatim to $N\subseteq\ZZ^{\NN}$, and we omit the details.
	\end{proof}
	
	An element of a monoid $M$ is called a \emph{unit} if it has an additive inverse in $M$. The set of units of $M$ forms a group, denoted by $\unit(M)$. A non-unit element $\ub\in M$ is \emph{irreducible} if every decomposition $\ub = \vb + \wb$ with $\vb, \wb \in M$ forces either $\vb \in \unit(M)$ or $\wb \in \unit(M)$. We write $\irr(M)$ for the set of irreducible elements of $M$, and define
	$$\re(M)=M\setminus(\unit(M)\cup\irr(M))$$
	to be the set of \emph{reducible elements}.
	
	If $\unit(M)=\{\nub\}$, then $M$ is called \emph{positive}. This is the case, for instance, when $M \subseteq \ZZ_{\ge 0}^{\NN}$. When $M$ is positive, it is clear that $\irr(M)$ is contained in every generating set of $M$. If, in addition, $M$ is an affine monoid in $\ZZ^n$ for some $n\in \NN$, then it is generated by its irreducible elements (see \cite[Proposition 2.14]{BG09}), and hence $\irr(M)$ is the unique minimal generating set of $M$. In this case, $\irr(M)$ is referred to as the \emph{Hilbert basis} of $M$, denoted by $\Hc_M$.  
	
	We now turn to symmetric monoids. Let $\Theta$ be a subgroup of $\Sym$ and let $M$ be a $\Theta$-invariant monoid. To account for the symmetry of $M$, we consider generating sets up to the action of $\Theta$.
	
	\begin{definition}
		A subset $A\subseteq M$ is called a \emph{$\Theta$-equivariant generating set} of $M$ if the set of orbits $\Theta(A)$ generates $M$. We say that $M$ is \emph{$\Theta$-equivariantly finitely generated} if it has a finite $\Theta$-equivariant generating set.
	\end{definition}
	
	For example, while the monoid $\ZZ_{\ge 0}^{(\NN)}$ is not finitely generated in the classical sense, it is $\Sym$-equivariantly generated by the single unit vector $\eb_1$. 
	
	By exploiting symmetry, the notion of Hilbert bases can be extended to positive symmetric monoids in $\ZZ^{(\NN)}$, even those that are not affine. This relies on the next result. For the sake of brevity, when we state that a monoid $M\subseteq\ZZ^{(\NN)}$ is $\Theta$-invariant with $\Theta=\Sym$ or $\Theta=\Sym(n)$ for some $n\in \NN$, we refer to the global case where $M\subseteq\ZZ^{(\NN)}$ and $\Theta=\Sym$, or the local case where $M\subseteq\ZZ^{n}$ and $\Theta=\Sym(n)$.
	
	\begin{proposition}
		\label{prop:irreducible}
		Let $\Theta$ be a subgroup of $\Sym$ and $M$ a $\Theta$-invariant monoid. Then the set of irreducible elements $\irr(M)$ is $\Theta$-invariant. Moreover, if $M$ is a positive monoid in $\ZZ^{(\NN)}$ and $\Theta$ is either $\Sym$ or $\Sym(n)$ for some $n \in \NN$, then $\irr(M)$ is the unique minimal generating set of $M$.
	\end{proposition}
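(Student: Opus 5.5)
The first assertion is formal. For $\sigma\in\Theta$, the map $\ub\mapsto\sigma(\ub)$ is an additive automorphism of $\ZZ^{\NN}$, and since $\Theta$ is a group it restricts to a monoid automorphism of $M$ with inverse $\sigma^{-1}$. Monoid automorphisms preserve units, so $\sigma(\unit(M))=\unit(M)$. Now let $\ub\in\irr(M)$ and suppose $\sigma(\ub)=\vb+\wb$ with $\vb,\wb\in M$. Applying $\sigma^{-1}$ gives $\ub=\sigma^{-1}(\vb)+\sigma^{-1}(\wb)$, so one of the two summands is a unit, and hence so is $\vb$ or $\wb$. Also $\sigma(\ub)$ is not a unit, since $\ub$ is not. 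Therefore $\sigma(\ub)\in\irr(M)$.

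For the second assertion, recall that when $M$ is positive, $\irr(M)$ lies in every generating set. It therefore suffices to show that $\irr(M)$ generates $M$; uniqueness and minimality then follow. I will use a descending induction that needs a well-founded measure on $M$. This is the main obstacle, because $M$ need not be affine, so the classical \cite[Proposition 2.14]{BG09} does not apply directly. The plan is to reduce to affine pieces.

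Fix $\ub\in M\setminus\{\nub\}$ and choose $n$ with $\ub\in\ZZ^n$; in the local case take $n$ to be the given $n$. Consider the decompositions $\ub=\vb_1+\cdots+\vb_k$ with all $\vb_i\in M\setminus\{\nub\}$. I claim $k$ is bounded. Suppose not. The supports of the $\vb_i$ are finite, but a priori they need not lie in $[n]$, so I handle this as follows. Since $M$ is positive, $\cone(M)$ contains no line through a nonzero point of $M$; more precisely, for a fixed finite set of indices the relevant part of $\cone(M)$ should be pointed. To make this work, I restrict to $M_m=M\cap\ZZ^m$, where $m$ is large enough to contain the supports in a given decomposition. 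The group $\gp(M_m)$ has finite rank. If there were arbitrarily long decompositions, then by compactness/averaging we could produce nonzero elements of $M$ whose sum is zero, that is, a nonzero unit, contradicting positivity. To make this rigorous with unbounded supports, I would use symmetry: permute any coordinates outside $[n]$ of the summands into a window of bounded size. Because the summands sum to $\ub$, coordinates outside $[n]$ cancel, so only $2\|\ub\|_1$-type bounds are available, and the bound must come from pointedness instead.

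A cleaner route, which I would try first, is to find a linear functional $\ell$ on $\ZZ^{(\NN)}$, symmetric or at least positive on $M\setminus\{\nub\}$, taking values in $\ZZ_{\ge0}$. Then $\ell(\ub)$ bounds $k$ directly, and induction on $\ell(\ub)$ gives $\ub\in\mndcl(\irr(M))$: either $\ub$ is irreducible, or $\ub=\vb+\wb$ with $\ell(\vb),\ell(\wb)<\ell(\ub)$. For a $\Sym(n)$- or $\Sym$-invariant positive monoid, the natural candidates are $\ell=s$, the coordinate sum, or $\ell=-s$. The key lemma is that positivity forces $s$ to be of one strict sign on $M\setminus\{\nub\}$, except possibly on a sign-symmetric part.

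To prove this lemma, take $\vb\in M$ with $s(\vb)=0$ and $\vb\ne\nub$. Summing over suitable transpositions $\tau$, one expects $\sum_\tau\tau(\vb)$ to telescope to a multiple of $\eb_i-\eb_j$. Using symmetry, both $\eb_i-\eb_j$ and $\eb_j-\eb_i$, up to positive multiples, then lie in $M$, which produces units. This is the step I expect to need the most care. If $s$ takes both signs on $M$, a similar averaging gives elements of coordinate sum zero and hence units. Granting the lemma, integrality of $s$ gives the well-founded induction and completes the proof.
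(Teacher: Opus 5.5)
Your final route is the paper's own: once $s$ is known to have a single strict sign on $M\setminus\{\nub\}$, any decomposition $\ub=\vb+\wb$ with $\vb,\wb\neq\nub$ gives $|s(\vb)|,|s(\wb)|<|s(\ub)|$, and induction on $|s(\ub)|$ shows that $\irr(M)$ generates $M$. Your argument for the $\Theta$-invariance of $\irr(M)$ is also fine. The paper does not prove the sign dichotomy here. It cites it as \Cref{thm:local positive and non-positive} (from \cite{LRV26}), stated just before the proposition, so invoking that theorem closes your proof. Your abandoned first attempt (bounding lengths of decompositions by compactness, moving supports into a window) is not needed and should be dropped.

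There are two problems with how you handle the lemma itself. First, the hedge ``except possibly on a sign-symmetric part'' must go. A single nonzero element with $s=0$, or elements of both signs, would break the induction, so you need the unconditional statement, which is what the theorem gives. Second, your sketch is not yet an argument: it never says which transpositions to sum or why the sum would telescope to a multiple of $\eb_i-\eb_j$. That detour is unnecessary, because the element itself is already a unit.

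Here is the short argument. If $\vb\in M$ and $\supp(\vb)\subseteq[n]$ (in the local case take the given $n$), then $\Sym(n)(\vb)\subseteq M$ and
\[
\sum_{\sigma\in\Sym(n)}\sigma(\vb)=(n-1)!\,s(\vb)\,\epb_n .
\]
So $s(\vb)=0$ yields $-\vb=\sum_{\sigma\neq\mathrm{id}}\sigma(\vb)\in M$, i.e.\ $\vb\in\unit(M)=\{\nub\}$. Now suppose $\vb,\wb\in M$ with $s(\vb)>0>s(\wb)$. Then $\xb=|s(\wb)|\,\vb+s(\vb)\,\wb\in M$ has $s(\xb)=0$, hence $\xb=\nub$. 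But then the nonzero element $|s(\wb)|\,\vb$ has inverse $s(\vb)\,\wb\in M$, contradicting positivity. This is the only direction of \Cref{thm:local positive and non-positive} your induction uses, so with it your proof becomes self-contained.
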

	
	To prove this result, we need the following characterization of positivity for symmetric monoids, established in \cite[Theorems 3.1 and 3.9]{LRV26}. Recall that $s(\ub)$ denotes the coordinate sum of $\ub$. 
	
	\begin{theorem}
		\label{thm:local positive and non-positive}
		Let $M\subseteq \ZZ^{(\NN)}$ be a $\Theta$-invariant monoid, where $\Theta=\Sym$ or $\Theta=\Sym(n)$ for some $n \in \NN$. Then $M$ is positive if and only if either $s(\ub)>0$ for all $\ub\in M\setminus\{\nub\}$, or $s(\ub)<0$ for all $\ub\in M\setminus\{\nub\}$.
	\end{theorem}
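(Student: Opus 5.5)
The "if" direction is immediate. Suppose $s(\ub)>0$ for all $\ub\in M\setminus\{\nub\}$, and let $\ub$ be a unit with inverse $\vb\in M$. Then $s(\ub)+s(\vb)=s(\nub)=0$. If $\ub\ne\nub$, then also $\vb\ne\nub$, so both coordinate sums are positive, which is impossible. Hence $\unit(M)=\{\nub\}$. The case $s<0$ is identical. The content is therefore in the converse, which I would prove in contrapositive form: if $M$ contains nonzero elements $\ub,\vb$ with $s(\ub)\ge 0\ge s(\vb)$, then $M$ has a nonzero unit. (If some nonzero element has $s=0$, take $\ub=\vb$.)

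\emph{Step 1.} The key construction uses symmetry. For a nonzero $\ub\in M$ with $s(\ub)=0$, I would show that $-\ub\in M$. Write $\ub$ inside $\ZZ^n$ with $n$ large enough (in the local case $n$ is fixed). Sum $\ub$ over all cyclic shifts $\sigma^k$ of an $m$-cycle $\sigma$ on $\supp(\ub)$, or on a block containing it. This gives $\sum_k\sigma^k(\ub)=c\cdot(\text{all-ones on the block})=s(\ub)/m\cdot\mathbf 1$, which equals $\nub$ here. Hence
\[
-\ub=\sum_{k=1}^{m-1}\sigma^k(\ub)\in M,
\]
using $\Theta$-invariance, so $\ub$ is a unit. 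For this I need an $m$-cycle in $\Theta$ acting on a block $B\supseteq\supp(\ub)$; this is available both for $\Sym(n)$, taking $B=[n]$, and for $\Sym$.

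\emph{Step 2.} Mixed signs. Suppose $\ub,\vb\in M$ with $s(\ub)>0>s(\vb)$. Choose positive integers $a,b$ with $a\,s(\ub)+b\,s(\vb)=0$, for instance $a=-s(\vb)$ and $b=s(\ub)$, and set $\wb=a\ub+b\vb\in M$, so that $s(\wb)=0$. If $\wb\ne\nub$, Step 1 gives a nonzero unit. If $\wb=\nub$, then $a\ub=-b\vb$, and $b\vb$ is a nonzero element with inverse $(a-1)\ub+\ub\cdot 1$ relation. Concretely, $-\ub=(a-1)\ub+b\vb\in M$, so $\ub$ itself is a nonzero unit. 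In every case $M$ fails to be positive.

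\emph{Main obstacle.} The delicate point is Step 1: making sure that the orbit sum over a cycle lands on a vector whose coordinates are constant on the block and that it lies in the right ambient space. In the local case the block must be all of $[n]$. In the global case any block $[n]\supseteq\supp(\ub)$ works, since $\Sym(n)\subseteq\Sym$. Once the cyclic orbit sum is seen to equal $(s(\ub)/n)\mathbf 1_{[n]}$, which is $\nub$ when $s(\ub)=0$, everything else is bookkeeping.
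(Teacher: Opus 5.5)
Your proof is correct. Note that the paper does not prove this statement itself. It imports it from [LRV26, Theorems 3.1 and 3.9], where it sits inside a characterization of positive and non-positive symmetric monoids. Elsewhere the paper handles positivity through \Cref{l:positive-pointed} and the classification of non-pointed symmetric cones.

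Your argument is a short, self-contained alternative. The only use of symmetry is a single cycle $\sigma\in\Theta$ on a block $B\supseteq\supp(\ub)$. Its orbit sum annihilates any zero-sum vector, so every nonzero $\ub\in M$ with $s(\ub)=0$ is a unit. The mixed-sign case then reduces to this one via $\wb=-s(\vb)\,\ub+s(\ub)\,\vb$. This is the same averaging device the paper uses in the proof of \Cref{duality: local positive}, where it appears as $\sum_{\sigma}\langle\sigma(\ub),\vb\rangle=(n-1)!\,s(\ub)s(\vb)$. It also treats the local and global cases uniformly. The cited route yields a fuller characterization of (non-)positive symmetric monoids. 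Yours proves exactly the statement needed here, by elementary means.

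Three small fixes:
\begin{enumerate}
\item The sum of the $m$ cyclic shifts of $\ub$ is $s(\ub)\,\mathbf{1}_B$, not $(s(\ub)/m)\,\mathbf{1}_B$, because each coordinate in $B$ picks up every entry of $\ub$ exactly once. This is harmless, since $s(\ub)=0$.
\item In Step~2, drop the garbled clause about ``$b\vb$ \ldots\ relation''. The line $-\ub=(a-1)\ub+b\vb\in M$, valid since $a\ge 1$, is the complete argument for the case $\wb=\nub$.
\item You correctly rely on the paper's convention that in the local case $M\subseteq\ZZ^n$, so that $\supp(\ub)\subseteq[n]$ and the cycle lies in $\Sym(n)$. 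This convention is essential: $\ZZ_{\ge 0}(\eb_{n+1}-\eb_{n+2})\subseteq\ZZ^{(\NN)}$ is $\Sym(n)$-invariant and positive, yet all its elements have coordinate sum zero.
\end{enumerate}
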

	
	\begin{proof}[Proof of \Cref{prop:irreducible}]
		The $\Theta$-invariance of $\irr(M)$ follows immediately from the fact that $\Theta$ acts by permuting coordinates. For the remaining assertion, it suffices to show that $\irr(M)$ generates $M$, because it is contained in every generating set of $M$. Let $\ub\in M\setminus\{\nub\}$. If $\ub\not\in\irr(M)$, then $\ub=\vb+\wb$ for some $\vb,\wb\in M\setminus\{\nub\}$. This yields $s(\ub)=s(\vb)+s(\wb)$. By \Cref{thm:local positive and non-positive}, we obtain $|s(\ub)|=|s(\vb)|+|s(\wb)|$, which implies $|s(\vb)|,|s(\wb)|<|s(\ub)|$. An induction on $|s(\ub)|$ shows that every $\ub\in M\setminus\{\nub\}$ is a finite sum of elements of $\irr(M)$. Therefore, $\irr(M)$ generates $M$, as desired.
	\end{proof}
	
	\Cref{prop:irreducible} allows us to make the following definition, extending the classical notion of Hilbert bases of affine monoids.
	
	\begin{definition}
		\label{def:equivariant-Hilbert-basis}
		Let $M\subseteq \ZZ^{(\NN)}$ be a positive $\Theta$-invariant monoid, where $\Theta=\Sym$ or $\Theta=\Sym(n)$. We define the \emph{Hilbert basis} of $M$ as $\Hc_M=\irr(M)$. A subset $H\subseteq M$ is called a \emph{$\Theta$-equivariant Hilbert basis} of $M$ if $H$ is a minimal set with respect to inclusion such that $\Hc_M=\Theta(H).$
	\end{definition}
	
	For instance, $\{\eb_i\mid i\in\NN\}$  is the Hilbert basis of $\ZZ_{\ge 0}^{(\NN)}$, and any set $\{\eb_i\}$ serves as a $\Sym$-equivariant Hilbert basis of this monoid.
	
	The study of global monoids $M \subseteq \ZZ^{(\NN)}$ often necessitates a transition to a chain of local monoids $\M=\left(M_n\right)_{n \geq 1}$ such that $M = \bigcup_{n \ge 1} M_n$. Various properties of $M$ can then be characterized 
	in terms of the chain $\M$. Such \emph{local-global principles} are central to the framework developed in \cite{KLR22,L25,LR23,LR24,LRV26}. Recall that an ascending chain of monoids $\M=\left(M_n\right)_{n \geq 1}$ is \emph{$\Sym$-invariant} if each $M_n$ is a $\Sym(n)$-invariant monoid in $\ZZ^{n}$. For any such chain, we always have
	$$\mndcl\left(\Sym(n)\left(M_m\right)\right) \subseteq M_n \quad \text{for all } n \geq m \geq 1.$$
	
	\begin{definition}
		A $\Sym$-invariant chain of monoids $\M=\left(M_n\right)_{n \geq 1}$ is said to \emph{stabilize} if there exists $r \in \NN$ such that
		$$M_n=\mndcl\left(\Sym(n)\left(M_m\right)\right) \quad \text {for all } n \geq m \geq r.$$
		The smallest such number $r$ is called the \emph{stability index} of $\M$, denoted by $\ind(\M)$.
	\end{definition}
	
	$\Sym$-invariant chains of cones and their stabilization are defined in an analogous manner. We refer the reader to \cite{KLR22, LR23} for a comprehensive treatment.
	
	%---------------------------------------------------------
	\section{Dual monoids}
	\label{sec:dual}
	
	This section is devoted to basic properties of dual monoids. In particular, we show that they are always normal. We also prove that the affine property is preserved under duality in the local setting.
	
	We begin by recalling the notion of dual cones. Let $C \subseteq \RR^{(\NN)}$ be a cone. The \emph{dual cone} $C^*$ is defined as the set of all $\RR$-linear forms $\alpha \in \Hom_{\RR}(\RR^{(\NN)},\RR)$ satisfying $\alpha(\ub) \geq 0$ for all $\ub \in C.$
	Identifying $\Hom_{\RR}(\RR^{(\NN)},\RR)$ with $\RR^\NN$ via the canonical pairing
	\[
	\langle \cdot , \cdot \rangle : \RR^{(\NN)} \times \RR^\NN \longrightarrow \RR, 
	\quad 
	\langle \ub, \vb \rangle = \sum_{i\ge 1} u_i v_i,
	\]
	the dual cone can be written as
	\[
	C^* = \left\{ \ub \in \RR^{\NN} \mid \langle \ub, \vb \rangle \geq 0 \ \text{ for all } \vb \in C \right\}.
	\]
	For a fixed $n\in \NN$ and a local cone $C_n \subseteq \RR^n$, we define 
	\[
	C_n^* = \left\{ \ub \in \RR^n \mid \langle \ub, \vb \rangle \geq 0 \ \text{ for all } \vb \in C_n \right\}.
	\]
	
	In the global setting, the finite generation of a cone does not necessarily pass to its dual. For instance, the dual of the cone $C=\{\nub\}\subseteq \RR^{(\NN)}$ is $C^*=\RR^{\NN}$, which is not finitely generated. However, for local cones, the situation is much better due to the classical Minkowski--Weyl theorem (see, e.g., \cite[Theorem 1.15]{BG09}):
	
	\begin{theorem}
		\label{thm:clasical-Minkowski-Weyl}
		A cone $C_n\subseteq\RR^{n}$ is finitely generated if and only if it is the intersection of finitely many closed linear halfspaces of $\RR^{n}$.
	\end{theorem}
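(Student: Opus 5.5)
The statement is the classical Minkowski--Weyl theorem. The plan is to prove both directions by passing through duality and using Fourier--Motzkin elimination as the engine.

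\emph{From halfspaces to generators.} Suppose $C_n=\{\xb\in\RR^n \mid \langle \ab_j,\xb\rangle\ge 0,\ j=1,\dots,m\}$. First split off the lineality space $L=\{\xb \mid \langle\ab_j,\xb\rangle=0 \text{ for all } j\}$. This is a linear subspace, generated as a cone by $\pm$ a basis. Then $C_n=L+(C_n\cap L^\perp)$, and $C_n\cap L^\perp$ is again cut out by finitely many halfspaces (write the equations of $L^\perp$ as pairs of inequalities); it is moreover pointed.

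For a pointed polyhedral cone $D$, I would show that $D$ is generated by its extreme rays. Each extreme ray is determined by choosing $n-1$ linearly independent inequalities to hold with equality, so there are only finitely many extreme rays. To see that they generate $D$, induct on dimension: a point $\xb\in D\setminus\{\nub\}$ that is not on an extreme ray lies in the relative interior of a face of dimension at least $2$. A line through $\xb$ inside the span of that face meets the face boundary in two points, since pointedness prevents the line from staying inside the face forever. Hence $\xb$ is a nonnegative combination of points in lower-dimensional faces, and these faces are again polyhedral cones.

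\emph{From generators to halfspaces.} Suppose $C_n=\cone(\vb_1,\dots,\vb_k)$. Apply the previous direction to the polyhedral cone
\[
C_n^*=\{\ub \mid \langle \ub,\vb_i\rangle\ge 0,\ i=1,\dots,k\},
\]
which yields $C_n^*=\cone(\wb_1,\dots,\wb_l)$. The claim is then that
\[
C_n=\{\xb \mid \langle \wb_j,\xb\rangle\ge 0 \text{ for all } j\}=C_n^{**},
\]
a finite intersection of halfspaces.

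The inclusion $C_n\subseteq C_n^{**}$ is trivial. For the reverse inclusion, use that a finitely generated cone is closed. This holds because, by Carath\'eodory, it is a finite union of cones over linearly independent subsets, and each of those is the image of an orthant under an injective linear map. Then apply the separation theorem: if $\xb\notin C_n$, there is a $\ub$ with $\langle\ub,C_n\rangle\ge 0$ and $\langle\ub,\xb\rangle<0$. Such a $\ub$ lies in $C_n^*$, so $\xb\notin C_n^{**}$.

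The main obstacle is the bipolar step, specifically the closedness of finitely generated cones, which the separation argument needs. As an alternative, one can avoid separation altogether. Fourier--Motzkin elimination shows that the projection of the polyhedral cone
\[
\{(\xb,\lambda) \mid \xb=\textstyle\sum_i\lambda_i\vb_i,\ \lambda\ge 0\}
\]
onto the $\xb$-coordinates is again polyhedral. Eliminating one variable replaces the inequalities by all nonnegative pairwise combinations that cancel that variable. This gives the generators-to-halfspaces direction directly, and I would fall back on this route if the closedness argument becomes messy.
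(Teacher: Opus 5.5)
The paper does not prove this statement. It quotes it as classical background from \cite[Theorem~1.15]{BG09} and uses it as a black box, together with the cited bipolar identity $C_n^{**}=C_n$ for closed cones, in the proof of \Cref{thm:dual-cone-fg}.

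Your proposal instead gives a self-contained classical proof, which is a genuinely different route from a citation:
\begin{enumerate}
\item Halfspaces $\Rightarrow$ generators: you split off the lineality space and generate the pointed part by its finitely many extreme rays.
\item Generators $\Rightarrow$ halfspaces: you apply the first direction to $C_n^*$ and then prove $C_n=C_n^{**}$ from closedness (Carath\'eodory) and separation. Fourier--Motzkin elimination serves as a separation-free alternative.
\end{enumerate}
The second half is essentially the same bidual mechanism the paper runs in \textup{(ii)}$\Rightarrow$\textup{(i)} of \Cref{thm:dual-cone-fg}, so your write-up would reduce that part of \Cref{sec:dual} to the separation theorem alone. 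The citation buys brevity; your route buys self-containedness. The finiteness of the extreme rays, the closedness argument, the separation step and the Fourier--Motzkin description are all fine.

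One step, however, fails as written: the line argument in the pointed case. Pointedness only forbids a whole line inside the face $F$, not a half-line. So a line through $\xb$ in $\mathrm{span}(F)$ need not leave $F$ on both sides.

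For a concrete failure, take $D=F=\RR^2_{\ge 0}$, $\xb=(1,1)$ and direction $(1,2)$. Then $\xb+t(1,2)\in F$ exactly for $t\ge -\tfrac12$, so the line meets the boundary only at $(\tfrac12,0)$. You get $\xb=(\tfrac12,0)+\tfrac12(1,2)$ with $(1,2)$ in the interior, so nothing has been reduced.

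You have to choose the direction. Put $\boldsymbol{c}=\sum_j\ab_j$.
\begin{enumerate}
\item For $\yb\in D=C_n\cap L^\perp$, every term of $\langle\boldsymbol{c},\yb\rangle=\sum_j\langle\ab_j,\yb\rangle$ is nonnegative. Hence $\langle\boldsymbol{c},\yb\rangle=0$ forces $\yb\in L\cap L^\perp=\{\nub\}$.
\item Since $\dim F\ge 2$, you can pick $\boldsymbol{d}\ne\nub$ in $\mathrm{span}(F)$ with $\langle\boldsymbol{c},\boldsymbol{d}\rangle=0$. By the previous item, $\pm\boldsymbol{d}\notin F$.
\item If $\xb+t\boldsymbol{d}\in F$ for all $t\ge 0$, then dividing by $t$ and letting $t\to\infty$ gives $\boldsymbol{d}\in F$ by closedness, a contradiction. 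The same holds for $t\le 0$.
\item Hence $\{t\mid \xb+t\boldsymbol{d}\in F\}=[-t_2,t_1]$ with $t_1,t_2>0$; positivity holds because $\xb$ lies in the relative interior of $F$.
\item Then $\xb=\tfrac{t_2}{t_1+t_2}(\xb+t_1\boldsymbol{d})+\tfrac{t_1}{t_1+t_2}(\xb-t_2\boldsymbol{d})$, and both endpoints lie in proper faces of $F$.
\end{enumerate}
With this choice of line your induction on dimension goes through, and the proof is complete.
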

	
	This result admits the following alternative formulation, which highlights the relationship between finite generation of a local cone and its dual. Since we are unaware of an appropriate reference, we sketch a proof for the convenience of the reader.
	
	\begin{theorem}
		\label{thm:dual-cone-fg}
		Let $C_n\subseteq\RR^{n}$ be a cone. Then the following are equivalent:
		\begin{enumerate}
			\item 
			$C_n$ is finitely generated;
			\item 
			$C_n^*$ is finitely generated and $C_n$ is closed in the standard topology on $\RR^{n}$.
		\end{enumerate}
	\end{theorem}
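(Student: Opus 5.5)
The plan is to use \Cref{thm:clasical-Minkowski-Weyl} to pass between finite generation of $C_n$ and of $C_n^*$, together with the bipolar identity $C_n^{**}=\overline{C_n}$ for an arbitrary cone $C_n\subseteq\RR^n$.

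\emph{(1) $\Rightarrow$ (2).} Suppose $C_n=\cone(\ab_1,\dots,\ab_k)$. Then
\[
C_n^*=\{\ub\in\RR^n\mid \langle \ub,\ab_i\rangle\ge 0 \text{ for } i=1,\dots,k\},
\]
since a linear form that is nonnegative on each generator is nonnegative on every nonnegative combination of them. So $C_n^*$ is an intersection of finitely many closed linear halfspaces. By \Cref{thm:clasical-Minkowski-Weyl} it is finitely generated. For closedness, \Cref{thm:clasical-Minkowski-Weyl} applied to $C_n$ itself writes $C_n$ as a finite intersection of closed halfspaces, so $C_n$ is closed.

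\emph{(2) $\Rightarrow$ (1).} Suppose $C_n^*=\cone(\bb_1,\dots,\bb_m)$. By the computation in the first direction, applied to $C_n^*$, the double dual $C_n^{**}$ is the finite intersection of the halfspaces $\{\langle\cdot,\bb_j\rangle\ge 0\}$. By \Cref{thm:clasical-Minkowski-Weyl} it is finitely generated. It therefore suffices to show $C_n^{**}=C_n$ when $C_n$ is closed.

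The inclusion $C_n\subseteq C_n^{**}$ is immediate from the definitions. For the reverse inclusion, take $\vb\notin C_n$. Since $C_n$ is a nonempty closed convex set, there is a nearest point $\wb\in C_n$ to $\vb$. Put $\ub=\wb-\vb$. The standard variational inequality for the nearest-point projection onto a closed convex cone gives two facts:
\begin{itemize}
\item $\langle \ub,\xb\rangle\ge 0$ for all $\xb\in C_n$, so $\ub\in C_n^*$;
\item $\langle \ub,\wb\rangle=0$, obtained by testing the scalings $t\wb$ for $t\ge 0$.
\end{itemize}
Then
\[
\langle \ub,\vb\rangle=\langle \ub,\wb\rangle-\|\ub\|^2=-\|\ub\|^2<0,
\]
so $\vb\notin C_n^{**}$. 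This proves $C_n^{**}=C_n$, and hence $C_n$ is finitely generated.

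The main obstacle is this separation step, the bipolar identity $C_n^{**}=\overline{C_n}$. Everything else is a formal consequence of \Cref{thm:clasical-Minkowski-Weyl} together with the description of the dual of a finitely generated cone by finitely many inequalities. Closedness is genuinely needed in (2): the cone $\{\nub\}\cup\{x\in\RR^2\mid x_2>0\}$ has the finitely generated dual $\cone(\eb_2)$ but is not itself finitely generated.
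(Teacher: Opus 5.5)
Your proof is correct and follows the paper's route. For (i)$\Rightarrow$(ii) you use \Cref{thm:clasical-Minkowski-Weyl}; for (ii)$\Rightarrow$(i) you use the bipolar identity $C_n=C_n^{**}$ for closed cones, together with the first implication applied to $C_n^*$. The differences are minor: you prove the bipolar identity directly by nearest-point projection where the paper cites it, and you get finite generation of $C_n^*$ from its inequality description $\{\langle\cdot,\ab_i\rangle\ge0\}$ rather than from the cited identity $C_n^*=\cone(\ub_1,\dots,\ub_s)$.
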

	
	\begin{proof}
		\textup{(i)}$\Rightarrow$\textup{(ii)}: 
		If $C_n$ is finitely generated, then by \Cref{thm:clasical-Minkowski-Weyl}, there exist $\ub_1,\dots,\ub_s\in\RR^n$ such that
		\[
		C_n=\left\{ \vb \in \RR^n \mid \langle \ub_i, \vb \rangle \geq 0 \ \text{ for all } i \in [s] \right\}.
		\]
		This implies that $C_n$ is closed. Moreover, $C_n^*=\cone(\ub_1,\dots,\ub_s)$ (see \cite[Theorem 1.16c]{BG09}), which is finitely generated. 
		
		\textup{(ii)}$\Rightarrow$\textup{(i)}: 
		Let $C_n^{**}$ denote the dual cone of $C_n^*$. Then $C_n=C_n^{**}$ since $C_n$ is closed; see, e.g., \cite[Theorem 1.16a]{BG09} and \cite[Theorem 11.5]{Ro70}. As $C_n^*$ is finitely generated, the implication \textup{(i)}$\Rightarrow$\textup{(ii)} ensures that $C_n^{**}$ is also finitely generated. 
	\end{proof}
	
	Let us now turn to dual monoids, which are defined analogously to dual cones. The pairing $\langle \cdot , \cdot \rangle$ restricts to $\ZZ^{(\NN)} \times \ZZ^\NN$. For a global monoid $M\subseteq \ZZ^{(\NN)}$, its \emph{dual monoid} is 
	$$M^* = \left\{ \ub \in \ZZ^{\NN} \mid \langle \ub, \vb \rangle \geq 0 \ \text{ for all } \vb \in M \right\},$$
	and for a local monoid $M_n \subseteq \ZZ^n$, we define
	$$M_n^* = \left\{ \ub \in \ZZ^n \mid \langle \ub, \vb \rangle \geq 0 \ \text{ for all } \vb \in M_n \right\}.$$

	The following result collects several basic properties of dual monoids. While these facts are largely standard, we include a proof for completeness.
	
	\begin{proposition}\label{prop:local-dualproperties}
		Let $M_n \subseteq \ZZ^{n}$ be a monoid, and let $\widetilde{M}_n$ denote its saturation in $\ZZ^n$. Then:
		\begin{enumerate}
			\item 
			$M_n^*=\cone(M_n^*)\cap \ZZ^n=\cone(M_n)^* \cap \ZZ^n.$
			In particular, $M_n^*$ is saturated in $\ZZ^n$, and hence normal.
			\item 
			$M_n^*=\widetilde{M}_n^*.$
			\item 
			$\widetilde{M}_n\subseteq M_n^{**}$, where $M_n^{**}$ denotes the bidual $(M_n^*)^*$. Equality holds if $M_n$ is affine.
		\end{enumerate}	
	\end{proposition}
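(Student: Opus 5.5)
The approach is to reduce everything to statements about cones and then apply \Cref{lem:Gordan}, \Cref{thm:clasical-Minkowski-Weyl} and \Cref{thm:dual-cone-fg}.

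For (i), the first step is the equality $M_n^*=\cone(M_n)^*\cap\ZZ^n$. An integer vector $\ub$ pairs nonnegatively with every element of $M_n$ if and only if it pairs nonnegatively with every nonnegative real combination of such elements, by linearity of $\langle\ub,\cdot\rangle$; so this is immediate. Next, set $D=\cone(M_n)^*$. We have $M_n^*\subseteq\cone(M_n^*)\cap\ZZ^n$ trivially. Conversely, $\cone(M_n^*)\subseteq D$ because $D$ is a cone containing $M_n^*$, so
\[
\cone(M_n^*)\cap\ZZ^n\subseteq D\cap\ZZ^n=M_n^*.
\]
Saturation in $\ZZ^n$ then follows from \Cref{lem:Gordan} applied with $N=\ZZ^n$: the saturation of $M_n^*$ in $\ZZ^n$ is $\cone(M_n^*)\cap\ZZ^n=M_n^*$. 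Since $\gp(M_n^*)\subseteq\ZZ^n$, saturation in $\ZZ^n$ implies saturation in $\gp(M_n^*)$, which is normality.

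For (ii), note that $M_n\subseteq\widetilde M_n\subseteq\cone(M_n)$; the second inclusion is \Cref{lem:Gordan}. Taking cones gives $\cone(\widetilde M_n)=\cone(M_n)$, and (i) applied to both monoids yields $M_n^*=\widetilde M_n^*$.

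For (iii), the inclusion $M_n\subseteq M_n^{**}$ is tautological. Since $M_n^{**}$ is saturated in $\ZZ^n$ by (i), it contains the saturation $\widetilde M_n$.

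The substantive part is equality when $M_n$ is affine. Let $C=\cone(M_n)$, which is finitely generated. By \Cref{thm:dual-cone-fg}, $C^*$ is finitely generated. Moreover, since $C$ has rational generators, $C^*$ is generated by finitely many rational, hence (after scaling) integer, vectors $\ub_1,\dots,\ub_s$. This uses the rational version of Minkowski--Weyl from \cite[Theorem 1.16]{BG09}: the facet normals of a rational cone can be chosen integral. These $\ub_i$ lie in $C^*\cap\ZZ^n=M_n^*$. Now take $\vb\in M_n^{**}$. Then $\langle\ub_i,\vb\rangle\ge0$ for all $i$, so $\vb\in C^{**}$. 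Since $C$ is closed (\Cref{thm:dual-cone-fg}), $C^{**}=C$. Hence $\vb\in C\cap\ZZ^n=\widetilde M_n$ by \Cref{lem:Gordan}.

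The main obstacle I expect is the rationality step: we need $\cone(M_n^*)$ to be all of $C^*$, not a smaller cone. Integer points of $C^*$ only span $C^*$ when $C^*$ is rational, and without this the bidual could be too large. I would handle it by citing the rational Minkowski--Weyl/Farkas statement for cones with integer generators.
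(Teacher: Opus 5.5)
Your proposal is correct and follows essentially the paper's route: parts (i) and (ii) reduce to cone duality plus \Cref{lem:Gordan}, and the affine case of (iii) rests on integral generators of $\cone(M_n)^*$ (the paper cites \cite[Proposition 1.69]{BG09}) together with $\cone(M_n)^{**}=\cone(M_n)$. The differences are cosmetic. You obtain $\widetilde M_n\subseteq M_n^{**}$ from the saturation of $M_n^{**}$ rather than from (ii). You also argue element-wise instead of first proving $\cone(M_n^*)=\cone(M_n)^*$ and then applying (i) to $M_n^*$.
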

	
	\begin{proof}
		(i) We first claim that
		\[
		\cone(M_n^*)\subseteq \cone(M_n)^*.
		\]
		Indeed, take $\ub =\sum_{i=1}^t \mu_i\ub_i\in \cone(M_n^*)$ and $\vb =\sum_{j=1}^s \eta_j\vb_j\in \cone(M_n)$, where $\ub_i\in M_n^*$, $\vb_j \in M_n$, and $\mu_i, \eta_j\in \RR_{\ge 0}$. 
		Since $\langle\ub_i, \vb_j \rangle \ge 0$ for all $i \in [t]$ and $j\in [s]$, we obtain
		$$\langle\ub, \vb \rangle = 
		\Big\langle \sum_{i=1}^t \mu_i\ub_i, \sum_{j=1}^s \eta_j\vb_j \Big\rangle 
		=  \sum_{i=1}^t\sum_{j=1}^s\mu_i \eta_j \langle\ub_i, \vb_j \rangle \ge 0.$$ 
		Hence, $\cone(M_n^*)\subseteq \cone(M_n)^*$, as claimed.
		It follows that
		\[
		M_n^*\subseteq \cone(M_n^*)\cap \ZZ^n \subseteq \cone(M_n)^* \cap \ZZ^n.
		\]
		For the reverse inclusion, let $\ub \in \cone(M_n)^* \cap \ZZ^n$. Then $\langle\ub, \vb \rangle \ge 0$ for all $\vb \in M_n$ because $M_n\subseteq\cone(M_n)$. As $\ub \in \ZZ^n$, this implies $\ub \in M_n^*$, establishing the first assertion.
		
		The equality $M_n^*=\cone(M_n^*)\cap \ZZ^n$ now implies, by \Cref{lem:Gordan}, that $M_n^*$ is saturated in $\ZZ^n$. In particular, it is normal since $\gp(M_n^*)\subseteq\ZZ^n$.
		
		(ii) Since $\cone(M_n)=\cone(\widetilde{M}_n)$, the assertion follows directly from (i).
		
		(iii) By (ii), $M_n^{**}=\widetilde{M}_n^{**}.$ Moreover, it is clear that $\widetilde{M}_n\subseteq \widetilde{M}_n^{**}$. Hence, $\widetilde{M}_n\subseteq M_n^{**}$. Now assume that $M_n$ is affine. Then $\cone(M_n)$ is finitely generated, and hence $\cone(M_n)^*$ is also finitely generated by \Cref{thm:dual-cone-fg}. Thus, there are elements $\ub_1,\dots,\ub_s$, which can be chosen in $\ZZ^n$ (see \cite[Proposition 1.69]{BG09}), such that $\cone(M_n)^*=\cone(\ub_1,\dots,\ub_s)$. By (i), we have
		\[
		\ub_i\in \cone(M_n)^*\cap \ZZ^n=M_n^*
		\quad\text{for all } i\in[s].
		\]
		Consequently, $\cone(M_n)^*\subseteq\cone(M_n^*)$. Together with the reverse inclusion from (i), this gives $\cone(M_n)^*=\cone(M_n^*)$. Applying (i) to $M_n^*$, we get
		\[
		M_n^{**}=\cone(M_n^*)^*\cap \ZZ^n
		=\cone(M_n)^{**}\cap \ZZ^n
		=\cone(M_n)\cap \ZZ^n
		=\widetilde{M}_n,
		\]
		where the equality $\cone(M_n)^{**}=\cone(M_n)$ follows from the fact that $\cone(M_n)$ is finitely generated (see \cite[Theorem 1.16]{BG09}).
	\end{proof}
	
	Without the affine assumption, the inclusions $\cone(M_n^*)\subseteq \cone(M_n)^*$ and $\widetilde{M}_n\subseteq M_n^{**}$ can be strict, as illustrated by the next example.
	
	\begin{example}
		\label{ex:strict-inclusion}
		Consider the monoid
		\[
		M_2=\{(u,v)\in\ZZ^2\mid u+v\sqrt{2}\ge0\}.
		\]
		We have
		\[
		\cone(M_2)=\{(x,y)\in\RR^2\mid x+y\sqrt{2}>0\}\cup\{(0,0)\}
		\]
		This cone is not closed, and its dual is
		\[
		\cone(M_2)^*=\{z(1,\sqrt{2})\mid z\ge 0\}.
		\]
		By \Cref{prop:local-dualproperties},
		\[
		M_2^*=\cone(M_2)^*\cap\ZZ^2=\{(0,0)\}.
		\]
		Thus,
		\[
		\cone(M_2^*)=\{(0,0)\}\subsetneq \cone(M_2)^*.
		\]
		Moreover, since $\widetilde{M}_2=\cone(M_2)\cap\ZZ^2=M_2$, one obtains
		\[
		\widetilde{M}_2\subsetneq\ZZ^2=M_2^{**}.
		\]
		Note that $\cone(M_2)$ is not finitely generated by \Cref{thm:dual-cone-fg}. Hence, $M_2$ is not affine. 
	\end{example}

	\Cref{prop:local-dualproperties} has the following global analogue.
	
	\begin{proposition}\label{prop:global-dualproperties}
		Let $M \subseteq \ZZ^{(\NN)}$ be a monoid, and let $\widetilde{M}$ denote its saturation in $\ZZ^{(\NN)}$. Then:
		\begin{enumerate}
			\item 
			$
			M^*=\cone(M^*)\cap \ZZ^{\NN}=\cone(M)^* \cap \ZZ^{\NN}.
			$
			In particular, $M^*$ is saturated in $\ZZ^{\NN}$, and hence normal.
			\item 
			$M^*=\widetilde{M}^*.$
			\item 
			$\widetilde{M}\subseteq M^{**}$, with equality if $M$ is affine.
		\end{enumerate}	
	\end{proposition}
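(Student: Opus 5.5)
The plan is to mirror the proof of \Cref{prop:local-dualproperties}, using the pairing $\langle\cdot,\cdot\rangle:\ZZ^{(\NN)}\times\ZZ^{\NN}\to\ZZ$. Only finite sums occur, because the first argument always has finite support. The bidual is read in the natural way:
\[
M^{**}=\{\vb\in\ZZ^{(\NN)}\mid \langle \ub,\vb\rangle\ge 0 \text{ for all } \ub\in M^*\}.
\]

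For (i), I would first show $\cone(M^*)\subseteq\cone(M)^*$ by bilinearity, exactly as in the local case. Take $\ub=\sum\mu_i\ub_i$ with $\ub_i\in M^*$ and $\vb=\sum\eta_j\vb_j$ with $\vb_j\in M$. Each $\vb_j$ has finite support, so expanding gives $\langle \ub,\vb\rangle=\sum\mu_i\eta_j\langle\ub_i,\vb_j\rangle\ge0$. This yields
\[
M^*\subseteq\cone(M^*)\cap\ZZ^{\NN}\subseteq\cone(M)^*\cap\ZZ^{\NN}.
\]
For the reverse inclusion, any integral $\ub$ that is nonnegative on $\cone(M)\supseteq M$ lies in $M^*$. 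Saturation of $M^*$ in $\ZZ^{\NN}$ then follows from \Cref{lem:Gordan}, whose first statement the paper explicitly allows for monoids in $\ZZ^{\NN}$. Normality follows since $\gp(M^*)\subseteq\ZZ^{\NN}$.

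Part (ii) is immediate from (i), since $\cone(M)=\cone(\widetilde M)$. For the inclusion in (iii), apply (ii) to get $M^{**}=\widetilde M^{**}$, and note the trivial containment $\widetilde M\subseteq\widetilde M^{**}$.

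The only step that needs a genuinely new idea is equality in (iii) for affine $M$, since the global dual lives in the huge group $\ZZ^{\NN}$. The plan is to reduce to the local statement.
\begin{enumerate}
\item An affine $M\subseteq\ZZ^{(\NN)}$ has finitely many generators, each with finite support, so $M\subseteq\ZZ^n$ for some $n$. Write $M_n=M$ viewed in $\ZZ^n$.
\item Compute the dual directly: $\ub\in\ZZ^{\NN}$ lies in $M^*$ if and only if its truncation $(u_1,\dots,u_n)$ lies in $M_n^*$, with the coordinates $u_i$ for $i>n$ arbitrary. Hence $M^*=M_n^*\times\ZZ^{\NN\setminus[n]}$.
\item Show that any $\vb\in M^{**}$ is supported in $[n]$. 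If $v_i\neq0$ for some $i>n$, choose $\ub=-k\,\mathrm{sign}(v_i)\eb_i$ with $k$ large. This $\ub$ lies in $M^*$ (take $\nub\in M_n^*$ in the first $n$ coordinates) and gives $\langle\ub,\vb\rangle<0$, a contradiction.
\item For $\vb\in\ZZ^n$, membership in $M^{**}$ then reduces to $\vb\in (M_n^*)^*=M_n^{**}$. By \Cref{prop:local-dualproperties}(iii), this equals the saturation of $M_n$ in $\ZZ^n$.
\item Finally, check that this local saturation coincides with $\widetilde M$. Since $\cone(M)\subseteq\RR^n$, \Cref{lem:Gordan} gives $\cone(M)\cap\ZZ^{(\NN)}=\cone(M)\cap\ZZ^n$.
\end{enumerate}
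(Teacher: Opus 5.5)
Your proposal is correct and follows the paper's proof essentially step by step. Part (i) uses bilinearity plus \Cref{lem:Gordan}, and (ii) and the inclusion in (iii) follow from it. For the affine equality you place $M$ in $\ZZ^n$, write $M^*$ as $M_n^*$ with free coordinates beyond $n$, force $M^{**}\subseteq\ZZ^n$, and apply \Cref{prop:local-dualproperties}(iii).

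The only difference is in how $M^{**}$ is placed inside $\ZZ^{(\NN)}$. The paper invokes Specker's theorem $\Hom(\ZZ^{\NN},\ZZ)\cong\ZZ^{(\NN)}$ for this, whereas you take it as the definition via the pairing. That is harmless: in the affine case even the $\Hom$-reading needs no Specker, because any homomorphism nonnegative on $M^*$ must vanish on the subgroup $\{\nub\}\times\ZZ^{\NN\setminus[n]}\subseteq M^*$.
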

	
	\begin{proof}
		All statements, except the last one that $M^{**}=\widetilde{M}$ when $M$ is affine, follow as in the local case. For the last statement, a different argument is required because the dual of a finitely generated cone in $\RR^{(\NN)}$ need not be finitely generated. Our approach relies on Specker's theorem \cite[Satz III]{Sp50} (cf. \cite[Chapter 13, Corollary 2.11]{Fu15}), which asserts that $\Hom(\ZZ^\NN,\ZZ)\cong\ZZ^{(\NN)}.$ This identification allows us to view the bidual $M^{**}$ as a subset of $\ZZ^{(\NN)}$. If $M$ is generated by $\vb_1,\dots,\vb_s\in\ZZ^{(\NN)}$, then all these generators lie in some $\ZZ^n$. Let $M_n\subseteq\ZZ^n$ be the monoid generated by $\vb_1,\dots,\vb_s$. By definition,
		\begin{align*}
			M^*&=\big\{\ub\in \ZZ^\NN\mid \langle\ub,\vb_k\rangle\ge0
			\text{ for all } k\in[s]\big\}\\
			&=\big\{\ub=(u_i)_{i\in\NN}\in \ZZ^\NN\mid (u_1,\dots,u_n)\in M_n^*\big\}.
		\end{align*}
		Now let $\wb=(w_i)_{i\in\NN}\in M^{**}$. Then  $\langle\wb,\ub\rangle\ge0$ for all $\ub\in M^*$. Since $u_j$ can take arbitrary integer values for $j\ge n+1$, this forces $w_j=0$ for all $j\ge n+1$. Hence $M^{**}\subseteq\ZZ^n$, and therefore $M^{**}=M_n^{**}$. By \Cref{prop:local-dualproperties}, we have
		\[
		M^{**}=\widetilde{M}_n=\cone(M_n)\cap \ZZ^n
		\subseteq \cone(M)\cap \ZZ^\NN=\widetilde{M}.
		\]
		Combining this with the general inclusion $\widetilde{M}\subseteq M^{**}$, we conclude that $M^{**}=\widetilde{M}$.
	\end{proof}
	
	Finally, we consider the behavior of the affine property under duality. As for cones, this property is generally not preserved in the global setting. For example, $\{\nub\}^*=\ZZ^{\NN}$ is not finitely generated. In the local case, however, we obtain the following partial analogue of \Cref{thm:dual-cone-fg}.
	
	\begin{proposition}\label{prop:local-affine}
		If $M_n \subseteq \ZZ^{n}$ is an affine monoid, then so is $M^*_n$. 
	\end{proposition}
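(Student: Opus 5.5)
The plan is to reduce the statement to Gordan's lemma (\Cref{lem:Gordan}) applied to the dual cone. By \Cref{prop:local-dualproperties}(i), we have
\[
M_n^*=\cone(M_n)^*\cap\ZZ^n ,
\]
so $M_n^*$ is the saturation of itself in $\ZZ^n$, and it suffices to show that the cone $\cone(M_n)^*$ is finitely generated by vectors which may be taken in $\ZZ^n$.

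First, since $M_n$ is affine, say generated by $\vb_1,\dots,\vb_s\in\ZZ^n$, the cone $\cone(M_n)=\cone(\vb_1,\dots,\vb_s)$ is finitely generated. By \Cref{thm:dual-cone-fg}, its dual $\cone(M_n)^*$ is also finitely generated. Moreover, this dual is cut out by the rational inequalities $\langle \ub,\vb_k\rangle\ge 0$. Hence it is a rational polyhedral cone and admits integral generators $\ub_1,\dots,\ub_t\in\ZZ^n$, as already used in the proof of \Cref{prop:local-dualproperties}(iii) via \cite[Proposition 1.69]{BG09}. Each $\ub_j$ then lies in $\cone(M_n)^*\cap\ZZ^n=M_n^*$. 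Together with $\cone(M_n^*)\subseteq\cone(M_n)^*$, this gives
\[
\cone(M_n^*)=\cone(M_n)^*=\cone(\ub_1,\dots,\ub_t),
\]
which is a finitely generated cone.

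Next, apply the second part of \Cref{lem:Gordan} with $M=M_n^*$ and $N=\ZZ^n$. Here $N$ is affine, being generated by $\pm\eb_1,\dots,\pm\eb_n$, and $\cone(M)$ is finitely generated. The lemma therefore shows that $\widehat{M}_N=\cone(M_n^*)\cap\ZZ^n$ is affine, and this set equals $M_n^*$ by \Cref{prop:local-dualproperties}(i).

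The only point needing care is rationality: that the dual cone is generated by integer vectors and not merely real ones. Without rationality, $\cone(M_n^*)$ could be strictly smaller than $\cone(M_n)^*$, as \Cref{ex:strict-inclusion} shows. This is handled by the rationality of the generators $\vb_k$, which is already available from the citation used in \Cref{prop:local-dualproperties}(iii).
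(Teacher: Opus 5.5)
Your proof is correct and follows the same route as the paper: \Cref{prop:local-dualproperties}(i) gives $M_n^*=\cone(M_n)^*\cap\ZZ^n$, \Cref{thm:dual-cone-fg} gives finite generation of $\cone(M_n)^*$, and Gordan's lemma (\Cref{lem:Gordan}) then yields that $M_n^*$ is affine. You also make explicit the rationality step $\cone(M_n^*)=\cone(M_n)^*$ via integral generators, which the paper's one-line appeal to \Cref{lem:Gordan} leaves implicit (it is established in the proof of \Cref{prop:local-dualproperties}(iii)) and which is needed to apply the lemma with $M=M_n^*$ and $N=\ZZ^n$.
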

	
	\begin{proof}
		If $M_n$ is affine, then $\cone(M_n)$ is a finitely generated cone. Hence, $\cone(M_n)^*$ is also finitely generated by \Cref{thm:dual-cone-fg}. Since $M_n^*=\cone(M_n)^* \cap \ZZ^n$ by \Cref{prop:local-dualproperties}, it follows from \Cref{lem:Gordan} that $M_n^*$ is an affine monoid.  
	\end{proof}
	
	While \Cref{ex:strict-inclusion} shows that the converse of \Cref{prop:local-affine} is false in general, it may hold under additional assumptions. In view of \Cref{thm:dual-cone-fg}, it is natural to ask whether the converse of \Cref{prop:local-affine} holds when $\cone(M_n)$ is closed.

	%-------------------------------------------------------
	\section{Duality for symmetric monoids}
	\label{sec:dualmonoids}
	
	This section explores dual monoids of symmetric monoids in both the local and global settings. Building on the characterizations of positive and non-positive symmetric monoids established in \cite{L25,LRV26}, we analyze how these properties behave under duality. In particular, we provide a complete description of the duals of non-positive symmetric monoids. These results lay the groundwork for the equivariant Minkowski--Weyl theorem established in the next section. 
	
	To begin, note that if $M_n$ is a monoid in $\ZZ^n$, then it can also be viewed as a monoid in $\ZZ^m$ for any $m\ge n$. The definition of $M_n^*$ thus \emph{depends} on the choice of the ambient lattice.
	To avoid ambiguity, for a given chain of monoids $\M=(M_{n})_{n\geq 1}$ we always regard $M_n$ as a subset of $\ZZ^n$ when forming its dual $M_n^*$. For such a chain, we denote by $\M^*=(M_{n}^*)_{n\geq 1}$ the corresponding sequence of dual monoids. It should be noted that $\M^*$ is generally not a $\Sym$-invariant chain; see \Cref{ex:dual-sequence}. Consequently, the framework for $\Sym$-invariant chains of monoids developed in \cite{KLR22,LR23} does not apply directly to $\M^*$.
	Nevertheless, the individual members of the sequence $\M^*$ inherit the symmetry of the original monoids, as shown in the following simple lemma (cf. \cite[Lemma 4.3]{LR23} for the analogous result for cones).
	
	\begin{lemma}
		\label{lem:sym-preservation}
		Assume that $M$ is a monoid in $\ZZ^{(\NN)}$ or $\ZZ^{n}$, and let $\Theta$ be a subgroup of $\Sym$. If $M$ is $\Theta$-invariant, then so is its dual $M^*$.
	\end{lemma}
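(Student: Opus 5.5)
The plan is to show that the pairing $\langle\cdot,\cdot\rangle$ is invariant under simultaneous permutation of coordinates. Invariance of $M^*$ then follows directly from the definition. Concretely, I will first check that for every $\sigma\in\Sym$, every $\ub\in\ZZ^{\NN}$ (or $\ZZ^n$) and every $\vb\in\ZZ^{(\NN)}$ (or $\ZZ^n$),
\[
\langle \sigma(\ub),\sigma(\vb)\rangle=\langle \ub,\vb\rangle .
\]
Indeed, $\sigma(\ub)_i=u_{\sigma^{-1}(i)}$ and $\sigma(\vb)_i=v_{\sigma^{-1}(i)}$. So $\sum_i u_{\sigma^{-1}(i)}v_{\sigma^{-1}(i)}$ is the same sum reindexed by the bijection $\sigma^{-1}$ of $\NN$. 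The sum is finite because $\vb$ has finite support, so the reindexing is legitimate. In the local case $\Theta\subseteq\Sym(n)$ would not preserve $\ZZ^n$ in general; for the local statement I take $\Theta$ to be a subgroup of $\Sym(n)$, which acts on $\ZZ^n$ and for which the same computation works.

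Next, let $\ub\in M^*$ and $\sigma\in\Theta$; I must show $\sigma(\ub)\in M^*$. Take any $\vb\in M$. Since $\Theta$ is a group, $\sigma^{-1}\in\Theta$, and $\Theta$-invariance of $M$ gives $\sigma^{-1}(\vb)\in M$. Applying the identity above to the pair $\ub$, $\sigma^{-1}(\vb)$ gives
\[
\langle \sigma(\ub),\vb\rangle=\langle \sigma(\ub),\sigma(\sigma^{-1}(\vb))\rangle=\langle \ub,\sigma^{-1}(\vb)\rangle\ge 0 .
\]
Since $\vb\in M$ was arbitrary, $\sigma(\ub)\in M^*$. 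Also $\sigma(\ub)$ still lies in $\ZZ^{\NN}$ (resp. $\ZZ^n$), so $\Theta(M^*)\subseteq M^*$.

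There is no real obstacle here. The only points needing care are that the inverse of $\sigma$ belongs to $\Theta$, and that the pairing identity is a valid finite reindexing even though $\ub$ may have infinite support. Both are immediate.
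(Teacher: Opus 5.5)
Your proof is correct and is essentially the paper's argument: the paper also uses $\sigma^{-1}(\vb)\in M$ together with the reindexing identity $\langle\sigma(\ub),\vb\rangle=\langle\ub,\sigma^{-1}(\vb)\rangle\ge 0$. Your restriction to $\Theta\subseteq\Sym(n)$ in the local case, so that $\sigma(\ub)$ stays in $\ZZ^n$, is a fair clarification of an assumption the paper leaves implicit.
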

	
	\begin{proof}
		Let $\ub \in M^*$ and $\sigma \in \Theta$. For any $\vb \in M$, the $\Theta$-invariance of $M$ yields $\sigma^{-1}(\vb) \in M$. It follows that 
		$$\langle\sigma(\ub), \vb\rangle=\sum_{i\ge 1} u_{\sigma^{-1}(i)} v_i=\sum_{j\ge 1} u_j v_{\sigma(j)}=\left\langle\ub, \sigma^{-1}(\vb)\right\rangle \geq 0.$$
		Hence, $\sigma(\ub) \in M^*$, and therefore, $M^*$ is $\Theta$-invariant.
	\end{proof}

	\subsection{Duality for positive symmetric monoids}
	In this subsection, we study how positivity of symmetric monoids behaves under duality. We show that this property is preserved in the global setting. Although an analogous statement does not hold for local monoids in general, we characterize precisely when it fails and explicitly determine the duals in those cases. As an application, we prove that for any non-trivial $\Sym$-invariant chain of positive monoids $\M=(M_{n})_{n\ge 1}$, the corresponding sequence of duals $\M^*=(M_{n}^*)_{n\ge 1}$ is eventually positive. Analogous results for cones are also established.

	Let us first introduce some notation. For a subset $A \subseteq \RR^{(\NN)}$, define
	\begin{align*}
		\Zk(A) &= \{\ub \in A \mid s(\ub)=0\},\\
		\Pk(A) &= \{\ub \in A \mid s(\ub)\ge 0\},\\
		\Nk(A) &= \{\ub \in A \mid s(\ub)\le 0\},
	\end{align*}
	where, as before, $s(\ub)$ denotes the coordinate sum of $\ub$. When  $n \ge 2$, it is shown in \cite[Corollary 5.10]{L25} that 
	\begin{align*}
		\Zk(\ZZ^n) &= \ZZ \,\Sym(n)(\eb_1-\eb_2),\\
		\Pk(\ZZ^n) &= \ZZ^n_{\ge 0} + \ZZ \,\Sym(n)(\eb_1-\eb_2),\\
		\Nk(\ZZ^n) &= \ZZ^n_{\le 0} + \ZZ \,\Sym(n)(\eb_1-\eb_2).
	\end{align*}
	Evidently, all these sets are non-positive submonoids of $\ZZ^n$.
	Analogous descriptions for $\Zk(A)$, $\Pk(A)$, and $\Nk(A)$ when 
	$A\in \{\RR^n,\ZZ^{(\NN)},\RR^{(\NN)}\}$ can be found in \cite[Section 5]{L25}. 
	
	For $n\in \NN$, let $\epb_n=(1,\dots,1)$ denote the all-one vector in $\ZZ^n$. 
	The next result describes the behavior of local positive symmetric monoids under duality.
	
	\begin{proposition}\label{duality: local positive}
		Let $M_n\subseteq \ZZ^{n}$ be a nonzero positive $\Sym(n)$-invariant monoid for some $n\in\NN$. Then:
		\begin{enumerate}
			\item 
			$M_n^*$ is positive if and only if $M_n \nsubseteq \ZZ\epb_n$.
			\item 
			If $M_n\subseteq \ZZ\epb_n$, then either $M_n \subseteq \ZZ_{\ge 0}\epb_n$ or $M_n \subseteq \ZZ_{\le 0}\epb_n$, and we have
			\[
			M_n^*=
			\begin{cases}
				\Pk(\ZZ^n)&\text{if } M_n \subseteq \ZZ_{\ge 0}\epsilon_n,\\
				\Nk(\ZZ^n)&\text{if } M_n \subseteq \ZZ_{\le 0}\epsilon_n.
			\end{cases}
			\]
		\end{enumerate}
	\end{proposition}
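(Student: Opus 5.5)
By \Cref{thm:local positive and non-positive}, after possibly replacing $M_n$ by $-M_n$, we may assume $s(\ub)>0$ for all $\ub\in M_n\setminus\{\nub\}$. This replacement sends $M_n^*$ to $-M_n^*$ and $\Pk(\ZZ^n)$ to $\Nk(\ZZ^n)$, and it preserves positivity. Everything below is carried out in this normalized case, and the opposite sign is recovered by symmetry.

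We first prove (ii). Suppose $M_n\subseteq\ZZ\epb_n$. Every nonzero element $k\epb_n$ then has $s=kn>0$, so $k>0$ and $M_n\subseteq\ZZ_{\ge0}\epb_n$. As $M_n$ is nonzero, it contains some $k\epb_n$ with $k>0$. Hence $\ub\in M_n^*$ if and only if $\langle\ub,k\epb_n\rangle=k\,s(\ub)\ge0$, that is, if and only if $s(\ub)\ge0$. This gives $M_n^*=\Pk(\ZZ^n)$.

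For (i), the direction ``$M_n^*$ positive $\Rightarrow M_n\nsubseteq\ZZ\epb_n$'' follows from (ii). If $n\ge2$, then $\Pk(\ZZ^n)$ contains the units $\pm(\eb_1-\eb_2)$ and is therefore non-positive. The case $n=1$ needs care. There $\Pk(\ZZ)=\ZZ_{\ge0}$ is positive, while every $M_1\subseteq\ZZ=\ZZ\epb_1$. The statement should then be read as excluding $n=1$, or else I would note this degenerate case separately.

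The remaining direction is the main step: assume $M_n\nsubseteq\ZZ\epb_n$ and show $\unit(M_n^*)=\{\nub\}$. A unit $\ub$ of $M_n^*$ satisfies both $\langle\ub,\vb\rangle\ge0$ and $\langle-\ub,\vb\rangle\ge0$, so $\langle\ub,\vb\rangle=0$ for all $\vb\in M_n$. By \Cref{lem:sym-preservation}, $\sigma(\ub)$ is also a unit for every $\sigma\in\Sym(n)$. Equivalently, $\ub$ is orthogonal to $\Sym(n)(M_n)=M_n$. Pick $\vb\in M_n$ with $\vb\notin\ZZ\epb_n$. Then $\vb$ has two coordinates $v_i\ne v_j$. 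Applying the transposition $(i\,j)$ gives $\vb-(i\,j)\vb=(v_i-v_j)(\eb_i-\eb_j)$, which lies in the span of $M_n$. By symmetry, every $\eb_k-\eb_l$ lies in the real span of $M_n$.

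Since $M_n\neq\{\nub\}$, there is some $\wb\in M_n$ with $s(\wb)>0$. Together, $\wb$ and the vectors $\eb_k-\eb_l$ span all of $\RR^n$. So $\ub$ is orthogonal to $\RR^n$, which forces $\ub=\nub$. Hence $M_n^*$ is positive.

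I expect the main subtlety to be the bookkeeping of the sign normalization and the $n=1$ edge case. The spanning argument itself is routine.
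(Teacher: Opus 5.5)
Your proposal is correct. Your proof of (ii), and of the implication ``$M_n^*$ positive $\Rightarrow M_n\nsubseteq\ZZ\epb_n$'', matches the paper's. Your caveat about $n=1$ is also justified. For $n=1$ every $M_1$ lies in $\ZZ\epb_1=\ZZ$, yet $M_1^*\in\{\ZZ_{\ge 0},\ZZ_{\le 0}\}$ is positive, so (i) fails as stated. The paper's proof tacitly uses $n\ge 2$ when it asserts that $\Pk(\ZZ^n)$ is non-positive, a fact it only recorded for $n\ge 2$.

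For the main implication your route differs from the paper's.

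\emph{The paper's route.} It takes an arbitrary $\ub\in M_n^*\setminus\{\nub\}$ with $s(\ub)\le 0$ and sorts $\ub$ and $\vb$ in opposite orders. The rearrangement inequality together with the averaging identity $\sum_{\sigma}\langle\sigma(\ub),\vb\rangle=(n-1)!\,s(\ub)s(\vb)$ then forces $\langle\sigma(\ub),\vb\rangle=0$ for all $\sigma$. A transposition difference then gives a contradiction.

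\emph{Your route.} You only consider units, for which orthogonality to $M_n$ is immediate. You then show that $M_n$ spans $\RR^n$. Your difference $\vb-(i\,j)\vb$ is the same transposition computation, viewed from the other side of the pairing.

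\emph{Comparison.} Your argument is shorter, needs no orderings, and isolates the real mechanism: a monoid whose real span is $\RR^n$ has a positive dual. Symmetry and positivity of $M_n$ enter only to guarantee full span. The paper's argument directly yields the stronger sign statement $s(\ub)>0$ for all nonzero $\ub\in M_n^*$. You can recover this as well: in your normalized case $\epb_n\in M_n^*$, and \Cref{thm:local positive and non-positive} applies to the positive $\Sym(n)$-invariant monoid $M_n^*$.

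One small point: your appeal to \Cref{lem:sym-preservation} is superfluous. What you actually use is the $\Sym(n)$-invariance of $M_n$, and hence of its span, not that of $M_n^*$.
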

	
	\begin{proof}
		We first prove (ii). Assume that $M_n\subseteq \ZZ\epb_n$. Since $M_n$ is positive, \Cref{thm:local positive and non-positive} implies that either 
		$M_n \subseteq \ZZ_{\ge 0}\epb_n$ or $M_n \subseteq \ZZ_{\le 0}\epb_n$. 
		We treat the case $M_n \subseteq \ZZ_{\ge 0}\epb_n$; the other case is analogous.
		Let $\vb\in M_n\setminus\{\nub\}$. Then $\vb=a\epb_n$ for some $a\in \ZZ_{> 0}$. 
		For any $\ub \in \ZZ^n$, we have
		\[
		\langle\ub,\vb \rangle = \langle\ub,a\epb_n\rangle = a\,s(\ub).
		\]
		Hence,
		\[
		M_n^* = \{\ub \in \ZZ^n \mid s(\ub)\ge 0\} = \Pk(\ZZ^n).
		\]
		
		Now (ii) implies that $M_n^*$ is non-positive when $M_n\subseteq \ZZ\epb_n$. 
		To prove (i), it remains to show that $M_n^*$ is positive when 
		$M_n\nsubseteq \ZZ\epb_n$.
		Since $M_n$ is positive, it follows from \Cref{thm:local positive and non-positive} that either $s(\xb)>0$ for all $\xb\in M_n\setminus\{\nub\}$, or $s(\xb)<0$ for all $\xb\in M_n\setminus\{\nub\}$. Without loss of generality, assume the former. Because $M_n \nsubseteq \ZZ\epb_n$, there exists $\vb=(v_1,\dots,v_n) \in M_n$ such that $v_i \ne v_j$ for some $i,j\in [n]$. Using the $\Sym(n)$-invariance of $M_n$, we may assume 
		$v_1< v_2\le \cdots \le v_n$. By \Cref{thm:local positive and non-positive}, it suffices to show that $s(\ub)>0$ for every $\ub\in M_n^*\setminus \{\nub \}$. Suppose, to the contrary, that there exists $\ub=(u_1,\dots,u_n) \in M_n^*\setminus \{\nub \}$ with $s(\ub)\le 0$. Since $M_n^*$ is also  $\Sym(n)$-invariant by \Cref{lem:sym-preservation}, we can assume that $u_1\ge u_2\ge\dots\ge u_n$. Then the rearrangement inequality (see, e.g., \cite[Theorem 368]{HLP}) yields 
		\[
		\langle\sigma(\ub),\vb\rangle\ge \langle\ub,\vb\rangle \ge 0 
		\quad \text{for all } \sigma \in \Sym(n).
		\]
		On the other hand, since $s(\vb)>0\ge s(\ub)$, we have 
		\begin{align*}
			\sum_{\sigma\in \Sym(n)}\langle\sigma(\ub),\vb\rangle
			&=\sum_{\sigma\in \Sym(n)}\sum_{i=1}^nu_{\sigma^{-1}(i)}v_i
			=\sum_{i=1}^nv_i\sum_{\sigma\in \Sym(n)}u_{\sigma^{-1}(i)}\\
			&=\sum_{i=1}^nv_i(n-1)!s(\ub)=(n-1)!s(\ub)s(\vb)\le 0.
		\end{align*}
		It follows that $s(\ub)=0$ and
		\begin{equation}
			\label{eq:null-dot-product}
			\langle\sigma(\ub),\vb\rangle =0
			\quad \text{for all } \sigma \in \Sym(n).
		\end{equation}
		Since $\ub\ne \nub$ and $s(\ub)=0$, there exists $k,l\in [n]$ such that $u_k>0>u_l$. Choose $\tau\in\Sym(n)$ with $\tau^{-1}(1)=k$, $\tau^{-1}(2)=l$, and let $\pi$ be the transposition swapping $1$ and $2$. Then
		\[
		\langle\tau(\ub),\vb\rangle-\langle\pi\tau(\ub),\vb\rangle=u_kv_1+u_lv_2-u_lv_1-u_kv_2=(u_k-u_l)(v_1-v_2) \ne 0.
		\]
		This contradicts \eqref{eq:null-dot-product}. Thus, $s(\ub) >0$ for all $\ub\in M_n^*\setminus\{\nub\}$, and the proof is complete.
	\end{proof}

	Applying \Cref{duality: local positive} to $\Sym$-invariant chains yields the following consequence. We call a chain \emph{non-trivial} if it contains a nonzero member.
	
	\begin{corollary}\label{duality:chain of positive monoids}
		Let $\M=(M_{n})_{n\geq 1}$ be a non-trivial $\Sym$-invariant chain of positive monoids. Then the sequence of dual monoids $\M^*=(M_{n}^*)_{n\geq 1}$ is eventually positive. More precisely, $M_n^*$ is positive for all $n>r$, where $r=\min\{n\mid M_n\ne\{\nub\}\}$.
	\end{corollary}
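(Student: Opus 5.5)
By \Cref{duality: local positive}(i), it suffices to show that $M_n\nsubseteq \ZZ\epb_n$ for every $n>r$; positivity of $M_n^*$ then follows at once. Note that each $M_n$ with $n\ge r$ is nonzero, because the chain is ascending and $M_r\ne\{\nub\}$. So \Cref{duality: local positive} applies to $M_n$ for all $n>r$.

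Fix $n>r$ and pick $\vb\in M_r\setminus\{\nub\}$. We regard $\vb\in\ZZ^r\subseteq\ZZ^n$ via the standard inclusion, so $\vb\in M_n$ and its coordinates $v_{r+1},\dots,v_n$ all vanish. Suppose, for contradiction, that $\vb\in\ZZ\epb_n$. Then all coordinates of $\vb$ are equal. Since $v_n=0$ (because $n>r$), every coordinate is $0$, so $\vb=\nub$. This contradicts the choice of $\vb$. Hence $\vb\in M_n\setminus\ZZ\epb_n$, so $M_n\nsubseteq\ZZ\epb_n$, and \Cref{duality: local positive}(i) gives that $M_n^*$ is positive.

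The only point needing care is the ambient-lattice convention for $\M^*$: $M_n^*$ is formed inside $\ZZ^n$, and $\epb_n$ is the all-one vector of length $n$. This is exactly why an element coming from $\ZZ^r$ with $r<n$ has a zero coordinate and cannot lie on the diagonal $\ZZ\epb_n$. Positivity of each $M_n$ is part of the hypothesis, so no further argument is needed there.
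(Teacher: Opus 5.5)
Your proof is correct and follows the paper's argument: pick a nonzero element of $M_r$, observe that it lies in $M_n$ but not in $\ZZ\epb_n$ because its last coordinate vanishes, and apply part (i) of the proposition on duals of local positive monoids. You also spell out two points the paper leaves implicit: why that element is off the diagonal, and why $M_n$ is nonzero so the proposition applies.
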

	
	\begin{proof}
		Let $\ub \in M_r\setminus\{\nub\}$. Then $\ub\in M_n$ for all $n>r$, and clearly $\ub \notin \ZZ\epsilon_n$. Hence $M_n \nsubseteq \ZZ\epsilon_n$, and the claim follows from \Cref{duality: local positive}.
	\end{proof}
	
	In the global setting, positivity behaves better under duality. Let $\epb$ denote the all-one vector in $\ZZ^\NN$. Since $\ZZ^{(\NN)} \cap \ZZ\epb = \{\nub\}$, we have $M \nsubseteq \ZZ\epb$ for any nonzero monoid $M\subseteq\ZZ^{(\NN)}$. 
	Arguing as above, we obtain the following.

	\begin{proposition}\label{duality: global positive}
		Let $M\subseteq \ZZ^{(\NN)}$ be a nonzero positive $\Sym$-invariant monoid. Then $M^*$ is also positive.
	\end{proposition}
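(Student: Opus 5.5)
The plan is to argue directly from the definition of units rather than through \Cref{thm:local positive and non-positive}. That theorem is stated only for monoids in $\ZZ^{(\NN)}$ or $\ZZ^n$, whereas $M^*$ lives in $\ZZ^{\NN}$, so the coordinate sum of an element of $M^*$ need not even be defined. So let $\ub=(u_i)_{i\in\NN}\in\unit(M^*)$. Then both $\ub$ and $-\ub$ lie in $M^*$, which means
\[
\langle \ub,\wb\rangle=0 \quad\text{for all } \wb\in M.
\]
Since $M$ is $\Sym$-invariant, this holds in particular for every $\wb=\sigma(\vb)$ with $\sigma\in\Sym$ and $\vb\in M$. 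The goal is to show $\ub=\nub$.

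\textbf{Step 1: $\ub$ is constant.} Fix $\vb\in M\setminus\{\nub\}$. Since $\supp(\vb)$ is finite and nonempty, we can pick an index $i$ with $v_i\neq 0$ and an index $j\notin\supp(\vb)$, so that $v_i\ne v_j=0$. Here we use that $\NN$ is infinite, which is exactly what fails in the local case, where $\vb$ could be a multiple of $\epb_n$.

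Now let $k\neq l$ be arbitrary indices. Choose $\sigma\in\Sym$ with $\sigma(i)=k$ and $\sigma(j)=l$, and let $\pi\in\Sym$ be the transposition of $i$ and $j$. The vectors $\sigma(\vb)$ and $\sigma\pi(\vb)$ agree outside the coordinates $k,l$. Both lie in $M$, so
\[
0=\langle\ub,\sigma(\vb)\rangle-\langle\ub,\sigma\pi(\vb)\rangle
= u_kv_i+u_lv_j-u_kv_j-u_lv_i=(u_k-u_l)(v_i-v_j).
\]
Since $v_i\ne v_j$, this gives $u_k=u_l$. As $k,l$ were arbitrary, $\ub=c\,\epb$ for some $c\in\ZZ$.

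\textbf{Step 2: $c=0$.} We have $0=\langle c\,\epb,\vb\rangle=c\,s(\vb)$. Because $M$ is a positive $\Sym$-invariant monoid in $\ZZ^{(\NN)}$, \Cref{thm:local positive and non-positive} gives $s(\vb)\ne0$ for the nonzero element $\vb$. Hence $c=0$ and $\ub=\nub$, so $\unit(M^*)=\{\nub\}$ and $M^*$ is positive.

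The only delicate point is Step 1. There one must make sure that the two test vectors $\sigma(\vb)$ and $\sigma\pi(\vb)$ differ only in positions $k$ and $l$, so that the infinite pairing with $\ub\in\ZZ^{\NN}$ reduces to a finite computation. This is guaranteed because $\pi$ only swaps $i$ and $j$. The existence of a coordinate $j$ with $v_j=0$ is what replaces the hypothesis $M_n\nsubseteq\ZZ\epb_n$ from \Cref{duality: local positive}.
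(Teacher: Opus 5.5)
Your proof is correct, and it takes a somewhat different route from the paper. The paper's proof is only the phrase ``arguing as above'', which points back to the proof of \Cref{duality: local positive}.

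That local argument works through coordinate sums on the dual side. It uses \Cref{thm:local positive and non-positive} to reduce positivity of $M_n^*$ to showing $s(\ub)>0$ for every nonzero $\ub\in M_n^*$. It then gets $s(\ub)=0$ and the vanishing of all pairings $\langle\sigma(\ub),\vb\rangle$ by averaging over the finite group $\Sym(n)$ (together with the rearrangement inequality). Only after that does the transposition computation give the contradiction.

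You keep only that final transposition step and get the vanishing of the pairings directly from $\pm\ub\in M^*$. This avoids two things that do not transfer to the global setting: the averaging, which has no counterpart for the infinite group $\Sym$, and the coordinate-sum criterion for the dual, which does not apply to $M^*\subseteq\ZZ^{\NN}$, as you correctly observe. The price is that you must separately rule out the constant vectors $c\,\epb$, which lie in $\ZZ^{\NN}$ but have no coordinate sum. You do this cleanly via $c\,s(\vb)=0$ and \Cref{thm:local positive and non-positive} applied to $M$ itself.

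In effect, your argument is a rigorous version of ``arguing as above'' in the global setting. It also proves the nontrivial direction of \Cref{duality: local positive}(i) verbatim if you pick $\vb\in M_n$ with two distinct coordinates instead of a zero coordinate. What the paper's local route gives in addition is sign information: $s$ has the same sign on $M_n^*\setminus\{\nub\}$ as on $M_n\setminus\{\nub\}$, which has no direct analogue for elements of $\ZZ^{\NN}$ with infinite support.

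A minor remark: the point you flag as delicate is not really delicate. The pairing $\langle\ub,\wb\rangle$ is a finite sum for every $\wb\in\ZZ^{(\NN)}$. All that matters is that $\sigma(\vb)$ and $\sigma\pi(\vb)$ differ exactly in positions $k$ and $l$, which you verified correctly.
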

	
	The property of being a positive monoid is intrinsically linked to the geometry of the cone it generates. Recall that a cone $C\subseteq \RR^{(\NN)}$ is \emph{pointed} if its \emph{lineality space}
	\[
	\lin(C)=\{\ub\in C\mid -\ub\in C\}
	\]
	is trivial. The next lemma, proved in \cite[Lemma 5.8]{L25}, slightly generalizes a classical result for affine monoids (see, e.g., \cite[Proposition 2.16]{BG09}). 
	
	\begin{lemma}
		\label{l:positive-pointed}
		A monoid $M\subseteq \ZZ^{(\NN)}$ is positive if and only if $\cone(M)$ is pointed.
	\end{lemma}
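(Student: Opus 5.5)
We prove the two implications separately. Throughout, we use that $\lin(C)$ of a cone $C$ is a linear subspace, being closed under addition and under multiplication by real scalars of either sign.

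For the direction ``$\cone(M)$ pointed $\Rightarrow$ $M$ positive'', let $\ub\in\unit(M)$. Then $\ub\in M$ and $-\ub\in M$, so both $\ub$ and $-\ub$ lie in $\cone(M)$. Hence $\ub\in\lin(\cone(M))=\{\nub\}$, and $\unit(M)=\{\nub\}$.

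For the converse, assume that $M$ is positive and take $\ub\in\lin(\cone(M))$. We have to show $\ub=\nub$. Write
\[
\ub=\sum_{i=1}^k\lambda_i\ab_i
\quad\text{and}\quad
-\ub=\sum_{j=1}^l\mu_j\bb_j,
\]
with $\ab_i,\bb_j\in M$ and $\lambda_i,\mu_j>0$. Adding the two expressions gives the relation
\[
\sum_{i=1}^k\lambda_i\ab_i+\sum_{j=1}^l\mu_j\bb_j=\nub,
\]
a positive real linear relation among finitely many elements of $M$. All of these elements lie in some $\ZZ^n$, so the argument is finite-dimensional.

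The key step is to pass from this real relation to a rational one. Let $\vb_1,\dots,\vb_m$ be the vectors $\ab_i,\bb_j$, and consider the solution space $V\subseteq\RR^m$ of
\[
\sum_{t=1}^m x_t\vb_t=\nub .
\]
The coefficients of this system are integers, so $V$ is defined over $\QQ$ and $V\cap\QQ^m$ is dense in $V$. The set $V\cap\RR^m_{>0}$ is relatively open in $V$, and it is nonempty because it contains the vector $(\lambda,\mu)$. By density it therefore contains a rational point. Clearing denominators yields positive integers $c_t$ with
\[
\sum_{t=1}^m c_t\vb_t=\nub .
\]

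We now conclude. Suppose some $\vb_s\neq\nub$, and set $\wb=\sum_{t\ne s}c_t\vb_t$, which lies in $M$. Then $c_s\vb_s+\wb=\nub$. Since $c_s\ge 1$, the element $\vb_s$ has the additive inverse $(c_s-1)\vb_s+\wb$, which lies in $M$. Thus $\vb_s\in\unit(M)=\{\nub\}$, a contradiction. Hence all $\ab_i$ vanish, so $\ub=\nub$, and $\cone(M)$ is pointed.

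The main obstacle is this rationality step: one must justify that a positive real relation among lattice vectors forces a positive integer relation. This rests on the standard fact that the kernel of an integer matrix is spanned by rational vectors, which makes its rational points dense in it.
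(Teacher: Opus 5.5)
Your proof is correct and complete. The paper does not prove this lemma itself: it cites \cite[Lemma~5.8]{L25} and presents the statement as a slight generalization of the affine case \cite[Proposition~2.16]{BG09}.

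Your argument is essentially the self-contained version of that route. Observing that the finitely many $\ab_i,\bb_j$ lie in some $\ZZ^n$ amounts to passing to the affine submonoid $N=\mndcl(\ab_1,\dots,\ab_k,\bb_1,\dots,\bb_l)$ of $M$. Your rational-density step then proves the affine case directly instead of quoting it.

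Quoting the affine result would be shorter: for $\ub\neq\nub$, $\cone(N)$ is not pointed, so $N$ has a nonzero unit, which is also a unit of $M$. Your version instead shows exactly where integrality is used, namely that the kernel of an integer matrix is spanned by rational vectors.
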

	
	By using arguments analogous to the monoid case, we obtain the following results for symmetric cones.

	\begin{proposition}
		Let $C_n\subseteq \RR^{n}$ be a nonzero pointed $\Sym(n)$-invariant cone. Then:
		\begin{enumerate}
			\item  
			$C_n^*$ is pointed if and only if $C_n \nsubseteq \RR\epb_n$.
			\item
			If $C_n \subseteq \RR\epb_n$, then either $C_n = \RR_{\ge 0}\epb_n$ or $C_n = \RR_{\le 0}\epb_n$, and we have
			\[
			C_n^*=
			\begin{cases}
				\Pk(\RR^n)&\text{if } C_n =\RR_{\ge 0}\epb_n,\\
				\Nk(\RR^n)&\text{if } C_n =\RR_{\le 0}\epb_n.
			\end{cases}
			\]
		\end{enumerate}
	\end{proposition}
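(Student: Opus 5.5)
I will mirror the proof of \Cref{duality: local positive}, replacing the integral lattice by $\RR^n$ and using the cone analogue of \Cref{thm:local positive and non-positive}. The needed analogue is: a $\Sym(n)$-invariant cone $C_n\subseteq\RR^n$ is pointed if and only if either $s(\ub)>0$ for all $\ub\in C_n\setminus\{\nub\}$ or $s(\ub)<0$ for all such $\ub$. The direction "pointed $\Rightarrow$ sign condition" is the harder one. For it I would argue as in \cite{L25}: if some nonzero $\ub\in C_n$ had $s(\ub)=0$, then averaging and differencing over $\Sym(n)$ produces a nonzero element and its negative, contradicting pointedness. If both signs occurred, a positive combination of elements of opposite sign would give a nonzero element of $C_n$ with zero coordinate sum, which we just excluded. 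The converse direction is immediate, since $s(\ub)+s(-\ub)=0$. Alternatively, I could cite the real version from \cite[Section 5]{L25} directly.

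For (ii), assume $C_n\subseteq\RR\epb_n$. Pointedness together with $C_n\ne\{\nub\}$ forces $C_n=\RR_{\ge0}\epb_n$ or $C_n=\RR_{\le0}\epb_n$: a cone contained in a line is $\{\nub\}$, a ray, or the whole line, and the line is excluded. In the first case $\langle\ub,\lambda\epb_n\rangle=\lambda s(\ub)$, so $C_n^*=\{\ub\in\RR^n\mid s(\ub)\ge0\}=\Pk(\RR^n)$. The second case is symmetric and gives $\Nk(\RR^n)$. Neither of these cones is pointed, since each contains $\pm(\eb_1-\eb_2)$ when $n\ge2$. The case $n=1$ needs a separate check. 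There $\RR\epb_1=\RR$, so $C_1$ is a ray, and $\Pk(\RR)=\RR_{\ge0}$ is pointed. So for $n=1$ the ``only if'' direction of (i) fails as literally stated. I would therefore note that $n\ge2$ is implicitly assumed there (the monoid version has the same caveat) and handle $n=1$ as an edge case.

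For (i), assume $C_n\nsubseteq\RR\epb_n$ and, without loss of generality, that $s>0$ on $C_n\setminus\{\nub\}$. Pick $\vb\in C_n$ with $v_1<v_2\le\dots\le v_n$. Suppose some $\ub\in C_n^*\setminus\{\nub\}$ has $s(\ub)\le 0$. By $\Sym(n)$-invariance of $C_n^*$ (the proof of \Cref{lem:sym-preservation} works verbatim for cones), we may sort $\ub$ decreasingly. The rearrangement inequality then gives $\langle\sigma(\ub),\vb\rangle\ge0$ for every $\sigma\in\Sym(n)$. The sum over $\Sym(n)$ of these pairings equals $(n-1)!\,s(\ub)s(\vb)\le0$, so every pairing is zero and $s(\ub)=0$. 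Choose $u_k>0>u_l$ and compare $\tau(\ub)$ with $\pi\tau(\ub)$ exactly as before. This gives $(u_k-u_l)(v_1-v_2)\ne0$, a contradiction. Hence $s>0$ on $C_n^*\setminus\{\nub\}$, and $C_n^*$ is pointed by the sign criterion, using only its easy direction.

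The main obstacle is the first step, the real-cone sign criterion for pointedness. Everything else is a verbatim transcription of the monoid argument, with no integrality used anywhere.
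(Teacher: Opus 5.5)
Your proposal is correct and is essentially the paper's proof: the paper only remarks that the argument for \Cref{duality: local positive} carries over verbatim to cones, and your transcription is exactly that. It consists of the sign criterion for pointedness, the identity $\sum_{\sigma\in\Sym(n)}\langle\sigma(\ub),\vb\rangle=(n-1)!\,s(\ub)s(\vb)$, and the transposition comparison, where any two distinct coordinates of $\vb$, such as $v_1<v_n$, suffice. Your caveat about $n=1$ is also right: there $\RR\epb_1=\RR$, yet $C_1^*\in\{\RR_{\ge0},\RR_{\le0}\}$ is pointed, so (i) needs $n\ge2$, which the paper uses tacitly when it treats $\Pk(\RR^n)$ and $\Nk(\RR^n)$ as non-pointed.
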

	
	\begin{corollary}
		Let $\C=(C_{n})_{n\geq 1}$ be any non-trivial $\Sym$-invariant chain of pointed cones. Then the sequence of dual cones $\C^*=(C_{n}^*)_{n\geq 1}$ is eventually pointed. More precisely, $C_n^*$ is pointed for all $n>r$, where $r=\min\{n\mid C_n\ne\{\nub\}\}$.
	\end{corollary}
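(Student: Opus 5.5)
The plan is to reduce to the preceding proposition for cones, exactly as \Cref{duality:chain of positive monoids} was deduced from \Cref{duality: local positive}. Concretely, let $\ub\in C_r\setminus\{\nub\}$, where $r=\min\{n\mid C_n\ne\{\nub\}\}$. Because the chain is ascending, $\ub\in C_n$ for every $n>r$. Moreover, $\ub$ is a vector of $\RR^r$ viewed inside $\RR^n$, so its coordinates $r+1,\dots,n$ vanish.

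Next I show that $C_n\nsubseteq\RR\epb_n$ for all $n>r$. If $\ub=\lambda\epb_n$ for some $\lambda\in\RR$, then the $n$-th coordinate gives $\lambda=0$, hence $\ub=\nub$, which is a contradiction. Each $C_n$ with $n>r$ is also nonzero and pointed, since it is a member of a chain of pointed cones, and it is $\Sym(n)$-invariant by the definition of a $\Sym$-invariant chain. So all hypotheses of part (i) of the preceding proposition hold, and it gives that $C_n^*$ is pointed for all $n>r$. This is the eventual pointedness claimed.

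The real work lies in that preceding proposition, which was only asserted "by analogous arguments," so I would also make sure its part (i) holds. Suppose $C_n\nsubseteq\RR\epb_n$ and $C_n$ is pointed. The cone analogue of \Cref{thm:local positive and non-positive} is that a $\Sym(n)$-invariant cone is pointed if and only if $s$ is strictly positive on $C_n\setminus\{\nub\}$ or strictly negative there; I would take this from \cite{L25}. Assume the positive case, and pick $\vb\in C_n$ with $v_1<v_2\le\dots\le v_n$.

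Now suppose $\ub\in\lin(C_n^*)$ is nonzero. Then $\langle\sigma(\ub),\vb\rangle=0$ for all $\sigma\in\Sym(n)$, since both $\ub$ and $-\ub$ lie in the $\Sym(n)$-invariant cone $C_n^*$. Averaging over $\Sym(n)$ gives $s(\ub)s(\vb)=0$, so $s(\ub)=0$. The transposition trick from the proof of \Cref{duality: local positive} then gives $(u_k-u_l)(v_1-v_2)\neq0$ for some pair of orbit elements, which contradicts the vanishing. Hence $\lin(C_n^*)=\{\nub\}$. I expect no real obstacle: the only care needed is in checking that the pointed characterization via $s$ is available for cones, and in noting that membership in $\lin$ directly gives the vanishing of all pairings, so no rearrangement step is needed.
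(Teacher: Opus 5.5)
Your proof is correct and follows the paper's route: the paper deduces this corollary exactly as in the monoid case, by taking a nonzero $\ub\in C_r$, observing that $\ub\notin\RR\epb_n$ for $n>r$, and invoking part (i) of the preceding proposition. Your extra verification of that proposition by showing $\lin(C_n^*)=\{\nub\}$ directly is also sound; the transposition step only needs some $\vb\in C_n$ with $v_1\neq v_2$, which you get by permuting any non-constant element of $C_n$.
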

	
	\begin{proposition}
		Let $C\subseteq \RR^{(\NN)}$ be a nonzero pointed $\Sym$-invariant cone. Then $C^*$ is also pointed.
	\end{proposition}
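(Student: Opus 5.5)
We adapt the proof of \Cref{duality: global positive} to cones; the positivity criterion \Cref{thm:local positive and non-positive} is replaced by its cone analogue. That analogue says a $\Sym$-invariant cone $C\subseteq\RR^{(\NN)}$ is pointed if and only if $s(\ub)>0$ for all $\ub\in C\setminus\{\nub\}$, or $s(\ub)<0$ for all $\ub\in C\setminus\{\nub\}$. We should derive it rather than assume it. The direction ``sign condition $\Rightarrow$ pointed'' is immediate, since $\ub,-\ub\in C$ forces $s(\ub)=0$. For the converse, one route is the trick used in \Cref{duality: local positive}. If $\ub\in C\setminus\{\nub\}$ has $s(\ub)=0$, pick indices with $u_k>0>u_l$ and consider $\ub+\pi(\ub)$, where $\pi$ is the transposition $(k\,l)$; this produces elements of the form $c(\eb_k-\eb_l)$ and eventually a nonzero line in $C$. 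If $C$ contains elements $\vb$ with $s(\vb)>0$ and $\wb$ with $s(\wb)<0$, a suitable positive combination gives a nonzero element of sum zero, unless both are multiples of $\epb$ restricted to a finite set. Alternatively, as the paper says ``analogous arguments'', we may simply cite the cone version of the characterization from \cite{L25}, which is the natural source. For $C^*\subseteq\RR^{\NN}$, elements need not have finite support, so the criterion cannot be applied to $C^*$ directly; we therefore argue directly that $\lin(C^*)=\{\nub\}$.

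Concretely, assume $C$ is nonzero and pointed. By the criterion, and after possibly replacing $C$ by $-C$ (note $(-C)^*=-C^*$), we may take $s(\vb)>0$ for all $\vb\in C\setminus\{\nub\}$. Suppose $\ub\in\lin(C^*)$, so $\langle\ub,\vb\rangle=0$ for all $\vb\in C$. Fix $\vb\in C\setminus\{\nub\}$, supported in $[m]$. Since $\vb$ has finite support, $\vb\notin\RR\epb$, and by $\Sym$-invariance we can place two unequal coordinates $v_a\ne v_b$ at any two positions $i\ne j$, with the remaining support moved far away. Applying this to $\vb$ and to its image under the transposition of $i$ and $j$, and subtracting, gives $(u_i-u_j)(v_a-v_b)=0$. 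Hence $u_i=u_j$ for all $i,j$, so $\ub=c\,\epb$ for some $c\in\RR$. Then $0=\langle\ub,\vb\rangle=c\,s(\vb)$ with $s(\vb)>0$, so $c=0$. Thus $C^*$ is pointed.

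The subtraction step needs care: $\sigma(\vb)$ and $\sigma'(\vb)$ must agree off $\{i,j\}$, with $\ub$ an infinite vector. Choosing $\sigma'=(i\,j)\sigma$ makes the difference $(v_a-v_b)(\eb_i-\eb_j)$ up to sign, and pairing this with $\ub$ is a finite computation, so there is no issue. The main obstacle is really the cone version of the positivity criterion. If citing \cite{L25} is not acceptable, the key point to prove is that a pointed $\Sym$-invariant cone cannot contain a nonzero element of sum zero. The approach there is to symmetrize such an element over transpositions to produce $\pm(\eb_k-\eb_l)$ directions, and hence a line.
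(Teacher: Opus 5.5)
Your argument is correct and is essentially the adaptation the paper intends by ``analogous arguments''. Since $s(\ub)$ is undefined on $\RR^{\NN}$, you rightly skip the averaging step of \Cref{duality: local positive} and keep only its transposition computation, $(u_i-u_j)(v_a-v_b)=0$, which forces every $\ub\in\lin(C^*)$ into $\RR\epb$; then $s(\vb)\neq 0$ gives $\ub=\nub$.

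The one slip is in your optional derivation of the criterion: $\ub+\pi(\ub)$ is symmetric in $k,l$, so it never produces $c(\eb_k-\eb_l)$. You need much less than the full criterion anyway, namely only that some nonzero $\vb\in C$ has $s(\vb)\neq 0$. This holds because if $\ub\in C\setminus\{\nub\}$ had $\supp(\ub)\subseteq[m]$ and $s(\ub)=0$, then $\sum_{\sigma\in\Sym(m)}\sigma(\ub)=(m-1)!\,s(\ub)\,\epb_m=\nub$, so $-\ub=\sum_{\sigma\in\Sym(m)\setminus\{\mathrm{id}\}}\sigma(\ub)\in C$, contradicting pointedness.
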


	\subsection{Duality for non-positive symmetric monoids} \label{subsec:non-positive dual}
	We now provide explicit descriptions of duals of non-positive symmetric monoids. In the local setting, we obtain the following classification.
	
	\begin{proposition}\label{duality: local non-positive}
		Let $M_n\subseteq \ZZ^{n}$ be a non-positive $\Sym(n)$-invariant monoid for some $n\ge 2$. Then $M_n^*$ is one of the following:
		\begin{align*}
			\mathfrak{N}_1^* = \{\nub\}, \quad
			\mathfrak{N}_2^* = \ZZ\epb_n, \quad
			\mathfrak{N}_3^* = \ZZ_{\ge 0}\epb_n,\quad
			\mathfrak{N}_4^* = \ZZ_{\le 0}\epb_n,\quad
			\mathfrak{N}_5^* = \Zk(\ZZ^n). 
		\end{align*}
	\end{proposition}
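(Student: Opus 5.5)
Since $M_n$ is non-positive, it contains a nonzero unit $\ub$ with $-\ub\in M_n$. Any $\wb\in M_n^*$ then satisfies $\langle \wb,\ub\rangle=0$. The plan is to pin down $M_n^*$ by splitting into two cases, according to whether $\unit(M_n)$ contains a nonzero element with zero coordinate sum.

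\emph{Case A: $\unit(M_n)$ contains some $\ub\neq\nub$ with $s(\ub)=0$.} By $\Sym(n)$-invariance, $\sigma(\ub)\in\unit(M_n)$ for all $\sigma$, so every $\wb\in M_n^*$ is orthogonal to all $\sigma(\ub)$. Pick $k,l$ with $u_k\neq u_l$; this is possible since $\ub\ne\nub$ and $s(\ub)=0$. Subtracting the transposed copy, as in the proof of \Cref{duality: local positive}, gives $(u_k-u_l)(w_i-w_j)=0$ for all $i\neq j$. Hence $\wb\in\ZZ\epb_n$, so $M_n^*\subseteq\ZZ\epb_n$. For $\wb=c\epb_n$ the condition is $c\,s(\vb)\ge0$ for all $\vb\in M_n$. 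Thus $M_n^*$ is determined by the sign pattern of $s$ on $M_n$:
\begin{itemize}
\item if $s(M_n)=\{0\}$, then $M_n^*=\ZZ\epb_n$;
\item if $s\ge0$ on $M_n$ and takes a positive value, then $M_n^*=\ZZ_{\ge0}\epb_n$;
\item if $s\le0$ on $M_n$ and takes a negative value, then $M_n^*=\ZZ_{\le0}\epb_n$;
\item if $s$ takes both signs on $M_n$, then $M_n^*=\{\nub\}$.
\end{itemize}

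\emph{Case B: every nonzero unit has nonzero coordinate sum.} Take a nonzero unit $\ub$. Then $-\ub$ is also a unit, so both signs of $s$ occur among units. First suppose some unit $\ub\notin\ZZ\epb_n$, so $u_k\ne u_l$ for some $k,l$. Then $\ub-\pi(\ub)$, where $\pi$ is the transposition of $k,l$, is a unit with zero sum. It is nonzero, since it has entries $\pm(u_k-u_l)$ in positions $k,l$. This puts us back in Case A. Otherwise all units lie in $\ZZ\epb_n$, so $\unit(M_n)\supseteq\{a\epb_n\}$ for some $a\neq0$. Then $\langle\wb,a\epb_n\rangle=0$ forces $s(\wb)=0$, i.e.\ $M_n^*\subseteq\Zk(\ZZ^n)$.

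Within Case B, consider $\vb\in M_n$. If some $\vb\in M_n$ is not in $\ZZ\epb_n$, then $M_n+\ZZ\epb_n\subseteq M_n$. Adding a large multiple $ma\epb_n$ to $\vb$ and to its permutations keeps them in $M_n$, and a symmetrization argument combined with the orthogonality constraints should then show $M_n^*=\{\nub\}$. Concretely: for $\wb\in M_n^*$ with $s(\wb)=0$, we get $\langle\wb,\sigma(\vb)\rangle\ge0$ for all $\sigma$. The sum of these over $\sigma$ equals $(n-1)!\,s(\wb)s(\vb)=0$, so $\langle\wb,\sigma(\vb)\rangle=0$ for every $\sigma$. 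The transposition trick then gives $w_i=w_j$, hence $\wb\in\ZZ\epb_n\cap\Zk=\{\nub\}$. If instead $M_n\subseteq\ZZ\epb_n$, then every $\wb$ of sum zero pairs to zero with all of $M_n$, and $M_n^*=\Zk(\ZZ^n)$.

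The main obstacle is organizing the case analysis cleanly so that each branch lands on exactly one of $\mathfrak{N}_1^*,\dots,\mathfrak{N}_5^*$. The key tool in every branch is the same: averaging over $\Sym(n)$ to force pairings to vanish, followed by the transposition difference $(w_i-w_j)(v_k-v_l)$.
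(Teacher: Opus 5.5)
Your argument is correct, but it takes a different route from the paper.

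The paper never works with units directly. It uses \Cref{l:positive-pointed} to see that $\cone(M_n)$ is non-pointed. It then invokes the classification of non-pointed $\Sym(n)$-invariant cones from \cite[Proposition 5.5]{L25} (\Cref{p:non-pointed-local-cones}), which gives the five possible dual cones of \Cref{p:dual-non-pointed-local-cones}. Finally it intersects with $\ZZ^n$ via $M_n^*=\cone(M_n)^*\cap\ZZ^n$ from \Cref{prop:local-dualproperties}.

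You instead stay in the lattice and split according to whether $\unit(M_n)$ contains a nonzero element of coordinate sum zero. Your only tools are the orbit-sum identity $\sum_{\sigma}\langle\wb,\sigma(\vb)\rangle=(n-1)!\,s(\wb)s(\vb)$ and the transposition difference $(w_i-w_j)(v_k-v_l)$, both already used in the proof of \Cref{duality: local positive}.

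The trade-off is as follows. The paper's route is shorter, parallels the cone theory, and also identifies $\cone(M_n)$ among $\mathfrak{D}_1,\dots,\mathfrak{D}_5$, but it relies on the external cone classification. Yours is self-contained. It also gives an intrinsic criterion for which dual occurs: if there is a zero-sum unit, $M_n^*$ is determined by the sign pattern of $s$ on $M_n$; otherwise $M_n^*=\Zk(\ZZ^n)$.

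Two small cleanups in Case B. First, drop the sentence asserting $M_n+\ZZ\epb_n\subseteq M_n$. Your justification only yields $M_n+\ZZ a\epb_n\subseteq M_n$, and your concrete symmetrization argument never uses it.

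Second, the sub-branch $M_n\not\subseteq\ZZ\epb_n$ is actually empty. Take any $\vb\in M_n$. The element $|a|\sum_{\sigma\in\Sym(n)}\sigma(\vb)=\pm(n-1)!\,s(\vb)\,(a\epb_n)$ is an integer multiple of the unit $a\epb_n$, hence a unit, and it has $\vb$ as a summand. Therefore $\vb$ is itself a unit. So in Case B one has $M_n=\unit(M_n)\subseteq\ZZ\epb_n$, and $M_n^*=\Zk(\ZZ^n)$ follows at once. This matches the paper's picture, where $\Zk(\ZZ^n)$ arises exactly when $\cone(M_n)=\RR\epb_n$.
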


	The proof of this result relies on the classification of non-pointed local symmetric cones established in \cite[Proposition 5.5]{L25}:
	
	\begin{proposition}
		\label{p:non-pointed-local-cones}
		Let $C_n\subseteq \RR^{n}$ be non-pointed $\Sym(n)$-invariant cone for some $n\ge 2$. Then $C_n$ is one of the following:
		\begin{align*}
			\mathfrak{D}_1=\RR^{n},\quad
			\mathfrak{D}_2=\Zk(\RR^{n}),\quad
			\mathfrak{D}_3=\Pk(\RR^{n}),\quad
			\mathfrak{D}_4=\Nk(\RR^{n}),\quad
			\mathfrak{D}_5=\RR\epb_n.
		\end{align*}
	\end{proposition}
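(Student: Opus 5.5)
The plan is to classify $C_n$ through its lineality space $\lin(C_n)$. Since $C_n$ is a cone, $\lin(C_n)=C_n\cap(-C_n)$ is a linear subspace of $\RR^n$, and it satisfies $C_n+\lin(C_n)=C_n$. Because $C_n$ is $\Sym(n)$-invariant, so is $\lin(C_n)$. By assumption $\lin(C_n)\neq\{\nub\}$.

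\emph{Step 1: the invariant subspaces.} I would first show that the only $\Sym(n)$-invariant linear subspaces of $\RR^n$ are $\{\nub\}$, $\RR\epb_n$, $\Zk(\RR^n)$ and $\RR^n$. This is the decomposition of the permutation representation into the trivial summand $\RR\epb_n$ and the standard summand $\Zk(\RR^n)$, the latter being irreducible for $n\ge 2$. To keep things elementary, I would argue directly as follows. Let $V$ be an invariant subspace containing some $\vb\notin\RR\epb_n$. Then $v_i\ne v_j$ for some $i,j$. With $\pi$ the transposition of $i$ and $j$,
\[
\vb-\pi(\vb)=(v_i-v_j)(\eb_i-\eb_j)\in V .
\]
So $V$ contains $\eb_i-\eb_j$, hence all of $\Sym(n)(\eb_1-\eb_2)$, which spans $\Zk(\RR^n)$. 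A subspace strictly containing $\Zk(\RR^n)$ is $\RR^n$, since $\Zk(\RR^n)$ has codimension one. This gives the list.

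\emph{Step 2: the case $\lin(C_n)=\RR\epb_n$.} Here I would show $C_n=\RR\epb_n$, which is $\mathfrak{D}_5$; I expect this to be the main point. Suppose $\ub\in C_n\setminus\RR\epb_n$. Since $C_n+\RR\epb_n=C_n$, the vector
\[
\wb=\ub-\tfrac{s(\ub)}{n}\epb_n
\]
lies in $C_n$, and it satisfies $\wb\ne\nub$ and $s(\wb)=0$. Now $\sum_{\sigma\in\Sym(n)}\sigma(\wb)=(n-1)!\,s(\wb)\,\epb_n=\nub$. Hence $-\wb$ is a nonnegative combination of the vectors $\sigma(\wb)$ with $\sigma\ne\mathrm{id}$ (each appearing with multiplicity in the orbit sum). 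All these lie in $C_n$ by invariance, so $-\wb\in C_n$. Thus $\wb\in\lin(C_n)=\RR\epb_n$, which together with $s(\wb)=0$ forces $\wb=\nub$, a contradiction.

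\emph{Step 3: the remaining cases.} If $\lin(C_n)=\RR^n$, then $C_n=\RR^n=\mathfrak{D}_1$. If $\lin(C_n)=\Zk(\RR^n)$, then $C_n$ is a union of fibres of $s\colon\RR^n\to\RR$, since $C_n+\Zk(\RR^n)=C_n$ and $\Zk(\RR^n)=\ker s$. Hence $C_n=s^{-1}(s(C_n))$. Here $s(C_n)$ is a cone in $\RR$, so it is $\{0\}$, $\RR_{\ge0}$, $\RR_{\le0}$ or $\RR$. These give respectively $\Zk(\RR^n)$, $\Pk(\RR^n)$, $\Nk(\RR^n)$ and $\RR^n$, completing the list $\mathfrak{D}_1,\dots,\mathfrak{D}_5$.
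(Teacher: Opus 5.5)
Your proof is correct and complete. The paper gives no argument of its own for this statement; it quotes it from [L25, Proposition~5.5], so there is no in-paper proof to follow step by step.

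Your route makes the paper self-contained at this point. It has four steps:
\begin{enumerate}
\item reduce to the $\Sym(n)$-invariant subspace $\lin(C_n)$;
\item use $\vb-\pi(\vb)=(v_i-v_j)(\eb_i-\eb_j)$ to list the invariant subspaces;
\item use the orbit sum $\sum_{\sigma}\sigma(\wb)=(n-1)!\,s(\wb)\,\epb_n$ to show $C_n=\RR\epb_n$ when $\lin(C_n)=\RR\epb_n$;
\item use the fibre description $C_n=s^{-1}(s(C_n))$ when $\lin(C_n)=\Zk(\RR^n)$.
\end{enumerate}

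A useful feature of your argument is that it uses only closure under addition and nonnegative scaling. It therefore visibly covers arbitrary cones, including non-closed and non-finitely generated ones. That is exactly the generality the paper needs when it applies the classification to $\cone(M_n)$ for non-affine $M_n$ in the proof of \Cref{duality: local non-positive} (compare \Cref{ex:strict-inclusion}).

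Your Step~3 also proves slightly more than you use. For every $\Sym(n)$-invariant cone one has $\Zk(C_n)\subseteq\lin(C_n)$, which is a cone-level analogue of the orbit averaging in the proof of \Cref{duality: local positive}.

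What citing [L25] buys the paper is brevity. It also keeps this result uniform with the global classification [L25, Theorem~5.1], from which \Cref{p:dual-non-pointed-global-cones} is derived.
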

	
	This classification immediately yields the corresponding description of dual cones.
	
	\begin{corollary}
		\label{p:dual-non-pointed-local-cones}
		Let $C_n\subseteq \RR^{n}$ be non-pointed $\Sym(n)$-invariant cone for some $n\ge 2$. Then $C_n^*$ is one of the following:
		\begin{align*}
			\mathfrak{D}_1^*=\{\nub\},\quad
			\mathfrak{D}_2^*=\RR\epb_n,\quad
			\mathfrak{D}_3^*=\RR_{\ge 0}\epb_n,\quad
			\mathfrak{D}_4^*=\RR_{\le 0}\epb_n,\quad
			\mathfrak{D}_5^*=\Zk(\RR^n).
		\end{align*}
	\end{corollary}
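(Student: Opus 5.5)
The corollary follows by dualizing each of the five cones $\mathfrak{D}_1,\dots,\mathfrak{D}_5$ listed in \Cref{p:non-pointed-local-cones}, one at a time. Since $C_n$ is one of them, we only need to verify the five displayed equalities. Throughout, we use the standard facts that duality reverses inclusions and that $\langle \cdot,\cdot\rangle$ is bilinear. We also use the elementary identities $\langle \ub,\epb_n\rangle = s(\ub)$ and $\Zk(\RR^n) = \epb_n^{\perp}$. The latter holds because $\Zk(\RR^n)$ is exactly the hyperplane $\{\ub\in\RR^n \mid s(\ub)=0\}$.

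We handle the cases in order.

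\emph{Case $\mathfrak{D}_1=\RR^n$.} If $\ub\in(\RR^n)^*$, then testing against $-\ub$ gives $-|\ub|^2\ge 0$, so $\ub=\nub$. Hence $\mathfrak{D}_1^*=\{\nub\}$.

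\emph{Case $\mathfrak{D}_5=\RR\epb_n$.} The condition $\langle\ub,\lambda\epb_n\rangle=\lambda s(\ub)\ge 0$ for all $\lambda\in\RR$ is equivalent to $s(\ub)=0$. Hence $\mathfrak{D}_5^*=\Zk(\RR^n)$.

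\emph{Case $\mathfrak{D}_2=\Zk(\RR^n)$.} This set is a linear subspace, so its dual is its orthogonal complement $(\epb_n^{\perp})^{\perp}=\RR\epb_n$.

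\emph{Case $\mathfrak{D}_3=\Pk(\RR^n)$.} Since $\Zk(\RR^n)\subseteq\Pk(\RR^n)$, the dual satisfies $\mathfrak{D}_3^*\subseteq\Zk(\RR^n)^*=\RR\epb_n$. Now write $\ub=\lambda\epb_n$. Pairing with $\epb_n\in\Pk(\RR^n)$ gives $\lambda n\ge0$, so $\lambda\ge 0$. Conversely, for $\lambda\ge0$ and $\vb\in\Pk(\RR^n)$ we have $\langle\lambda\epb_n,\vb\rangle=\lambda s(\vb)\ge0$. Hence $\mathfrak{D}_3^*=\RR_{\ge0}\epb_n$.

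\emph{Case $\mathfrak{D}_4=\Nk(\RR^n)$.} The argument is symmetric to the previous case, using $\Nk(\RR^n)=-\Pk(\RR^n)$. This gives $\mathfrak{D}_4^*=\RR_{\le0}\epb_n$.

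None of these steps is a genuine obstacle; the whole argument is routine. The only point needing a little care is the use of $n\ge 2$, which is a hypothesis of \Cref{p:non-pointed-local-cones}. It is not needed in the dual computations themselves; it only guarantees that the five cones are the complete list.
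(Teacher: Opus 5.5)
Your proposal is correct and follows the paper's approach. The paper says the corollary is immediate from the classification in \Cref{p:non-pointed-local-cones}, and your case-by-case dualization of the five cones supplies exactly those details, including the reduction $\Pk(\RR^n)^*\subseteq\Zk(\RR^n)^*=\RR\epb_n$.
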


	\begin{proof}[Proof of \Cref{duality: local non-positive}]
		By \Cref{l:positive-pointed}, $\cone(M_n)$ is non-pointed. Hence, $\cone(M_n)^*$ must be one of the cones $\mathfrak{D}_i^*$ listed in \Cref{p:dual-non-pointed-local-cones}. Since $\mathfrak{N}_i^*=\mathfrak{D}_i^*\cap\ZZ^n$ for all $i\in[5]$, the result follows from \Cref{prop:local-dualproperties}.
	\end{proof}
	
	\begin{remark}
		\label{rm:n=1}
		When $n=1$, $\RR$ is the unique non-pointed cone in $\RR$. Similarly, $\ZZ$ is the unique non-positive monoid in $\ZZ$. Thus, the only dual of a non-positive monoid in $\ZZ$ is $\{0\}$.
	\end{remark}
	
	\Cref{duality: local non-positive} provides a simple way to construct a $\Sym$-invariant chain of monoids whose dual sequence fails to be $\Sym$-invariant.
	
	\begin{example}
		\label{ex:dual-sequence}
		Let $M_n=\Zk(\ZZ^n)$ for $n\ge 1$. Then $\M=(M_{n})_{n\geq 1}$ is clearly a $\Sym$-invariant chain. By \Cref{duality: local non-positive} (and \Cref{rm:n=1}), we have
		$
		M_n^*=\ZZ\epb_n
		$ for $n\ge 1$. Thus, the sequence $\M^*=(M_{n}^*)_{n\geq 1}$ is not even an ascending chain.
	\end{example}
	
	\Cref{p:dual-non-pointed-local-cones} has the following global analogue, which is derived from the classification of non-pointed $\Sym$-invariant cones in \cite[Theorem 5.1]{L25}.
	
	\begin{proposition}
		\label{p:dual-non-pointed-global-cones}
		Let $C\subseteq \RR^{(\NN)}$ be non-pointed $\Sym$-invariant cone. Then $C^*$ is one of the following:
		\begin{align*}
			\mathfrak{C}_1^*=\{\nub\},\quad
			\mathfrak{C}_2^*=\RR\epb,\quad
			\mathfrak{C}_3^*=\RR_{\ge 0}\epb,\quad
			\mathfrak{C}_4^*=\RR_{\le 0}\epb.
		\end{align*}
	\end{proposition}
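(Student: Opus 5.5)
The proof reduces to the classification of non-pointed $\Sym$-invariant cones in \cite[Theorem 5.1]{L25}, with each dual then computed directly. By analogy with \Cref{p:non-pointed-local-cones}, I expect that classification to say that a non-pointed $\Sym$-invariant cone $C\subseteq\RR^{(\NN)}$ is one of
\[
\mathfrak{C}_1=\RR^{(\NN)},\quad \mathfrak{C}_2=\Zk(\RR^{(\NN)}),\quad \mathfrak{C}_3=\Pk(\RR^{(\NN)}),\quad \mathfrak{C}_4=\Nk(\RR^{(\NN)}).
\]
The local fifth case $\RR\epb_n$ disappears globally, because $\RR\epb\not\subseteq\RR^{(\NN)}$. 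With this list in hand, it suffices to show $\mathfrak{C}_i^*$ is the $i$-th set in the statement for each $i\in[4]$.

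\emph{Computing the duals.} I will use the facts from \cite[Section 5]{L25} that $\Zk(\RR^{(\NN)})$ is spanned by the vectors $\eb_i-\eb_j$, and that $\Pk(\RR^{(\NN)})=\RR_{\ge0}^{(\NN)}+\Zk(\RR^{(\NN)})$ (similarly for $\Nk$ with $\RR_{\le 0}^{(\NN)}$). Let $\ub\in\RR^\NN$.
\begin{itemize}
\item[(1)] For $C=\RR^{(\NN)}$: testing $\ub$ against $\pm\eb_i$ forces $u_i=0$ for every $i$, so $C^*=\{\nub\}$.
\item[(2)] For $C=\Zk(\RR^{(\NN)})$: testing against $\pm(\eb_i-\eb_j)$ gives $u_i=u_j$ for all $i,j$, so $\ub\in\RR\epb$. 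Conversely, $\langle c\epb,\vb\rangle=c\,s(\vb)=0$ for every $\vb\in C$, so $C^*=\RR\epb$.
\item[(3)] For $C=\Pk(\RR^{(\NN)})$: since $C\supseteq\Zk$, step (2) gives $\ub=c\epb$. Testing against $\eb_1$ gives $c\ge0$. Conversely, $c\,s(\vb)\ge0$ on $C$ when $c\ge 0$, so $C^*=\RR_{\ge0}\epb$.
\item[(4)] For $C=\Nk(\RR^{(\NN)})$: the same argument, using $-\eb_1$, gives $C^*=\RR_{\le0}\epb$.
\end{itemize}
The pairing $\langle\cdot,\cdot\rangle$ is well defined for $\ub\in\RR^\NN$ and $\vb\in\RR^{(\NN)}$ because $\vb$ has finite support, so these computations are legitimate even though $\epb\notin\RR^{(\NN)}$.

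\emph{Main obstacle.} The only nontrivial point is the exact form of the global classification \cite[Theorem 5.1]{L25}. If I had to prove it rather than cite it, I would argue as follows.
\begin{itemize}
\item[(a)] Take a nonzero $\ub\in\lin(C)$. By $\Sym$-invariance and because $\lin(C)$ is a linear subspace, $\lin(C)$ contains $\ub-\sigma(\ub)$ for a transposition $\sigma$ moving a coordinate of $\ub$ into a position outside $\supp(\ub)$.
\item[(b)] This produces a nonzero multiple of some $\eb_i-\eb_j$ in $\lin(C)$, and hence, by symmetry, all of $\Zk(\RR^{(\NN)})$.
\item[(c)] A cone containing $\Zk(\RR^{(\NN)})$ is then determined by the signs of $s$ attained on it. If only $0$ is attained, $C=\mathfrak{C}_2$; if only $0$ and positive values, $C=\mathfrak{C}_3$; if only $0$ and negative values, $C=\mathfrak{C}_4$; if both signs occur, $C=\mathfrak{C}_1$.
\end{itemize}
Step (a) is exactly where the global setting differs from the local one. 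In $\RR^n$ there may be no free position outside $\supp(\ub)$, and that is what allows the extra local case $\RR\epb_n$.
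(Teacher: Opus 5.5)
Your proposal is correct and follows the same route as the paper, which derives the statement directly from the global classification in \cite[Theorem 5.1]{L25}. That list is indeed $\RR^{(\NN)}$, $\Zk(\RR^{(\NN)})$, $\Pk(\RR^{(\NN)})$, $\Nk(\RR^{(\NN)})$, and the four duals come out exactly as you compute them. Your optional sketch of the classification, using $\ub-\sigma(\ub)=u_i(\eb_i-\eb_j)$ for a transposition $\sigma$ swapping some $i\in\supp(\ub)$ with some $j\notin\supp(\ub)$, is also sound.
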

	
	Combining this with \Cref{prop:global-dualproperties,l:positive-pointed}, we obtain the global analogue of \Cref{duality: local non-positive}.
	
	\begin{proposition}\label{duality: global non-positive}
		Let $M\subseteq \ZZ^{(\NN)}$ be a non-positive $\Sym$-invariant monoid. Then $M^*$ is one of the following:
		\begin{align*}
			\mathfrak{M}_1^* = \{\nub\}, \quad
			\mathfrak{M}_2^* = \ZZ\epb, \quad
			\mathfrak{M}_3^* = \ZZ_{\ge 0}\epb,\quad
			\mathfrak{M}_4^* = \ZZ_{\le 0}\epb. 
		\end{align*}
	\end{proposition}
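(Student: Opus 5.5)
The plan mirrors the proof of \Cref{duality: local non-positive}, with the global facts in place of the local ones. Let $M\subseteq\ZZ^{(\NN)}$ be non-positive and $\Sym$-invariant. First I will pass to the cone. By \Cref{l:positive-pointed}, $\cone(M)$ is not pointed. It is $\Sym$-invariant, since each $\sigma\in\Sym$ acts linearly and maps $M$ into $M$. Hence \Cref{p:dual-non-pointed-global-cones} applies, and $\cone(M)^*$ equals one of $\mathfrak{C}_1^*,\dots,\mathfrak{C}_4^*$.

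Next I will intersect with the lattice. By \Cref{prop:global-dualproperties}(i), $M^*=\cone(M)^*\cap\ZZ^{\NN}$. It remains to compute $\mathfrak{C}_i^*\cap\ZZ^\NN$ for each $i$:
\begin{align*}
\{\nub\}\cap\ZZ^\NN&=\{\nub\}, & \RR\epb\cap\ZZ^\NN&=\ZZ\epb,\\
\RR_{\ge0}\epb\cap\ZZ^\NN&=\ZZ_{\ge 0}\epb, & \RR_{\le0}\epb\cap\ZZ^\NN&=\ZZ_{\le 0}\epb.
\end{align*}
For the middle cases, note that $\lambda\epb\in\ZZ^\NN$ holds exactly when $\lambda\in\ZZ$, because its first coordinate is $\lambda$. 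These four sets are precisely $\mathfrak{M}_1^*,\dots,\mathfrak{M}_4^*$, which proves the claim.

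The only point that needs care is that \Cref{prop:global-dualproperties}(i) was proved "as in the local case". One has to check that the argument works when the dual lives in $\ZZ^\NN$ rather than in a finite lattice. It does: the pairing $\langle\ub,\vb\rangle$ is a finite sum whenever $\vb\in\RR^{(\NN)}$, so the inclusion $\cone(M)^*\cap\ZZ^\NN\subseteq M^*$ is immediate from $M\subseteq\cone(M)$. Conversely, if $\ub\in M^*$, then $\langle\ub,\cdot\rangle$ is nonnegative on $M$ and hence, by linearity, on every nonnegative combination of elements of $M$, that is, on $\cone(M)$. With this verified, nothing beyond bookkeeping remains, so I do not expect a genuine obstacle.
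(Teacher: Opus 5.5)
Your proposal is correct and is exactly the paper's route. Non-positivity makes $\cone(M)$ a non-pointed $\Sym$-invariant cone by \Cref{l:positive-pointed}, so \Cref{p:dual-non-pointed-global-cones} determines $\cone(M)^*$, and \Cref{prop:global-dualproperties}(i) gives $M^*=\cone(M)^*\cap\ZZ^{\NN}$. Your check that part (i) carries over to the $\ZZ^{\NN}$ setting is sound and makes explicit what the paper leaves implicit.
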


	%-------------------------------------------------------
	\section{Equivariant Minkowski--Weyl theorem}
	\label{sec:M-W}
	
	As discussed in \Cref{sec:dual}, the classical Minkowski--Weyl theorem (\Cref{thm:clasical-Minkowski-Weyl}) can be interpreted in the language of duality: the dual of a finitely generated cone in $\RR^n$ is again finitely generated. Moreover, the generators of the dual cone are determined by the linear half-space description of the original cone (see \Cref{thm:dual-cone-fg}). 
	Extensions of this duality to the equivariant setting have recently been developed in \cite{L25, LR23}. Building on these developments, we establish in this section equivariant analogues of the Minkowski--Weyl theorem for monoids, treating both the global and local settings.

	\subsection{The global setting}
	As noted earlier, finite generation of a global monoid $M \subseteq \ZZ^{(\NN)}$ is not preserved under duality in general. The following result shows that this failure persists even when symmetry is imposed. An analogous phenomenon for cones was observed in \cite[Theorem 4.9]{LR23}.
	
	\begin{proposition}\label{dualmonoid:not equi f.g}
		For any $\Sym$-invariant monoid $M \subseteq \ZZ_{\ge 0}^{(\NN)}$, its dual monoid $M^*\subseteq\ZZ^\NN$ is not $\Sym$-equivariantly finitely generated.
	\end{proposition}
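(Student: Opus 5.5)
Since $M\subseteq\ZZ_{\ge 0}^{(\NN)}$, every vector with nonnegative entries pairs nonnegatively with every element of $M$. Hence $\ZZ_{\ge 0}^{\NN}\subseteq M^*$. The plan is to show that no finite set $A\subseteq M^*$ can have $\mndcl(\Sym(A))$ containing all of $\ZZ_{\ge 0}^{\NN}$. I will use a counting/cardinality argument on the ``tail behaviour'' of vectors in $\ZZ^{\NN}$, which is invariant under finitary permutations and compatible with finite sums.

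For each $\ub\in\ZZ^{\NN}$ consider the set $T(\ub)$ of values $c\in\ZZ$ such that $u_i=c$ for infinitely many $i$. Choose elements $\ub$ whose coordinates eventually take infinitely many distinct values, for instance $\ub=(1,2,3,\dots)\in\ZZ_{\ge0}^{\NN}$. A cleaner invariant is the following: call two vectors \emph{tail-equivalent} if they differ in only finitely many coordinates. An element $\sigma\in\Sym$ moves only finitely many coordinates, so $\sigma(\ab)$ is tail-equivalent to $\ab$. Consequently every element of $\mndcl(\Sym(A))$ is tail-equivalent to some $\sum_{\ab\in A} m_{\ab}\ab$ with $m_{\ab}\in\ZZ_{\ge0}$. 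This is a finite sum, and all permuted copies of the same $\ab$ contribute the same tail.

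The set of such tail classes is countable when $A$ is finite, since it is indexed by $\ZZ_{\ge0}^{A}$. On the other hand, $\ZZ_{\ge0}^{\NN}$ contains uncountably many pairwise non-tail-equivalent vectors, for example $0/1$-sequences. Each tail class is countable, so uncountably many classes are needed. This contradiction shows that $M^*$ is not $\Sym$-equivariantly finitely generated.

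The only step requiring care is checking that tail-equivalence is preserved under sums. This is immediate, because modifying finitely many coordinates of each summand modifies finitely many coordinates of the sum. I do not expect a serious obstacle. Note that the argument uses only $\ZZ_{\ge 0}^{\NN}\subseteq M^*$, which holds for every $M$ in the nonnegative orthant, including $M=\{\nub\}$. So no case distinction on $M$ is needed.
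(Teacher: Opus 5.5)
Your proof is correct. It rests on the same cardinality mismatch as the paper's proof, but you establish the countable side differently.

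The paper argues in one line: $\Sym$ is countable and $A$ is finite, so $\gp(\Sym(A))$ is countable, whereas $\gp(M^*)\supseteq\gp(\ZZ_{\ge0}^{\NN})=\ZZ^{\NN}$ is uncountable. You instead work in the quotient $\ZZ^{\NN}/\ZZ^{(\NN)}$. Since every $\sigma\in\Sym$ is finitary, the image of $\mndcl(\Sym(A))$ there is the monoid generated by the finitely many cosets $\ab+\ZZ^{(\NN)}$ with $\ab\in A$. Meanwhile $\ZZ_{\ge0}^{\NN}$ meets uncountably many cosets, because each coset is countable.

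For this statement your route is a detour. Counting countably many countable cosets just proves that $\mndcl(\Sym(A))$ is countable, which is immediate from the countability of $\Sym(A)$.

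What your route buys is an invariant finer than cardinality. It shows that an equivariantly finitely generated monoid has finitely generated image in $\ZZ^{\NN}/\ZZ^{(\NN)}$. That can also rule out equivariant finite generation for some countable $\Sym$-invariant monoids in $\ZZ^{\NN}$, where the paper's argument says nothing.

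The opening remarks about $T(\ub)$ and $(1,2,3,\dots)$ are never used and can be dropped.
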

	
	\begin{proof}
		Suppose, for contradiction, that $M^*$ is $\Sym$-equivariantly generated by a finite set $A \subseteq \ZZ^\NN$. Then
		$\gp(M^*)=\gp(\Sym(A)).$
		Since $\Sym$ is countable and $A$ is finite, $\gp(M^*)$ is a countable set. On the other hand, the assumption $M \subseteq \ZZ_{\ge 0}^{(\NN)}$ implies that $\ZZ_{\ge 0}^{\NN} \subseteq M^*$. Consequently, 
		$$\gp(M^*)=\gp(\ZZ_{\ge 0}^{\NN})=\ZZ^\NN.$$
		Since $\ZZ^\NN$ is uncountable, we reach a contradiction. Thus, $M^*$ cannot be $\Sym$-equivariantly finitely generated.
	\end{proof}
	
	This result shows that a direct equivariant analogue of the Minkowski--Weyl theorem fails for global monoids. Nevertheless, by considering only elements with finite support, we obtain a complete description of the restricted dual monoid. This mirrors the behavior of global $\Sym$-invariant cones described in \cite[Proposition 3.1]{L25}.
	
	\begin{proposition}\label{M-W:global}
		Let $M\subseteq \ZZ^{(\NN)}$ be a $\Sym$-invariant monoid. Then the restricted dual monoid $M^* \cap \ZZ^{(\NN)}$ is one of the following: 
		$$ \{\nub\}, \quad \ZZ_{\ge 0}^{(\NN)}, \quad \ZZ_{\le 0}^{(\NN)}, \quad \ZZ^{(\NN)}.$$
		In particular, $M^* \cap \ZZ^{(\NN)}$ is always $\Sym$-equivariantly finitely generated.
	\end{proposition}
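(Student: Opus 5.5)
Write $M^\circ\defas M^*\cap\ZZ^{(\NN)}=\{\ub\in\ZZ^{(\NN)}\mid\langle\ub,\vb\rangle\ge0\text{ for all }\vb\in M\}$. By \Cref{lem:sym-preservation}, $M^\circ$ is a $\Sym$-invariant monoid in $\ZZ^{(\NN)}$. The key observation is that both $M$ and $M^\circ$ have finite support, so they play symmetric roles. I would split according to the sign behaviour of $M$, using \Cref{thm:local positive and non-positive}.

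\emph{Case $M=\{\nub\}$.} Then $M^\circ=\ZZ^{(\NN)}$.

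\emph{Case $M\ne\{\nub\}$.} Fix $\vb\in M\setminus\{\nub\}$ and any $\ub\in M^\circ$. Choose $\sigma\in\Sym$ that moves $\supp(\vb)$ to indices disjoint from $\supp(\ub)$ except for one prescribed index. Concretely, for $i\in\supp(\vb)$ and any $j\in\NN$, pick $\sigma$ with $\sigma(i)=j$ and $\sigma(\supp(\vb)\setminus\{i\})\cap\supp(\ub)=\emptyset$; this is possible because $\supp(\ub)$ is finite. Then
\[
0\le\langle\ub,\sigma(\vb)\rangle=u_jv_i .
\]
Applying this over all $j$ and all $i\in\supp(\vb)$ gives the following constraints.
\begin{itemize}
\item If $M$ contains an element with a positive coordinate, then $u_j\ge0$ for all $j$, i.e. $M^\circ\subseteq\ZZ_{\ge0}^{(\NN)}$.
\item If $M$ contains an element with a negative coordinate, then $M^\circ\subseteq\ZZ_{\le0}^{(\NN)}$.
\item If both occur, then $M^\circ=\{\nub\}$.
\end{itemize}

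For the reverse inclusions: if $M\subseteq\ZZ_{\ge0}^{(\NN)}$, then clearly $\ZZ_{\ge0}^{(\NN)}\subseteq M^\circ$, so equality holds. The case $M\subseteq\ZZ_{\le0}^{(\NN)}$ is symmetric and gives $M^\circ=\ZZ_{\le0}^{(\NN)}$. Every nonzero $M$ falls into exactly one of these three situations (entries all $\ge0$, all $\le0$, or mixed signs), which yields exactly the four listed possibilities.

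The final claim then follows because each candidate is $\Sym$-equivariantly finitely generated:
\begin{itemize}
\item $\{\nub\}$ is generated by $\emptyset$;
\item $\ZZ_{\ge0}^{(\NN)}$ is generated by $\{\eb_1\}$;
\item $\ZZ_{\le0}^{(\NN)}$ is generated by $\{-\eb_1\}$;
\item $\ZZ^{(\NN)}$ is generated by $\{\eb_1,-\eb_1\}$.
\end{itemize}

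The only point requiring care is the permutation step: we need $\sigma(\vb)$ to meet $\supp(\ub)$ in exactly one coordinate. This uses that $\NN$ is infinite and both supports are finite, and it is where the global setting differs essentially from the local one, where no such $\sigma$ need exist.
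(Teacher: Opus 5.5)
Your proof is correct, but it takes a different route from the paper's.

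\textbf{The paper's route.} The paper never argues directly with monoid elements. It uses \Cref{prop:global-dualproperties} to write $M^*=\cone(M)^*\cap\ZZ^{\NN}$. Hence $M^*\cap\ZZ^{(\NN)}=\bigl(\cone(M)^*\cap\RR^{(\NN)}\bigr)\cap\ZZ^{(\NN)}$. It then imports the classification of restricted duals of $\Sym$-invariant cones from \cite[Proposition 3.1]{L25} and intersects each of the four possible cones with the lattice.

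\textbf{Your route.} You prove the classification from scratch. You choose a finitary permutation $\sigma$ with $\sigma(i)=j$ and $\sigma(\supp(\vb)\setminus\{i\})\cap\supp(\ub)=\emptyset$. This collapses the pairing to $\langle\ub,\sigma(\vb)\rangle=u_jv_i\ge 0$, which immediately forces the sign pattern. The permutation exists because any injection from a finite subset of $\NN$ into $\NN$ extends to an element of $\Sym$.

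\textbf{What each buys.} Your argument is self-contained. It also tells you exactly which case occurs:
\begin{itemize}
\item $\ZZ^{(\NN)}$ iff $M=\{\nub\}$;
\item $\ZZ^{(\NN)}_{\ge0}$ iff $\nub\ne M\subseteq\ZZ^{(\NN)}_{\ge0}$;
\item $\ZZ^{(\NN)}_{\le0}$ iff $\nub\ne M\subseteq\ZZ^{(\NN)}_{\le0}$;
\item $\{\nub\}$ otherwise.
\end{itemize}
The paper's route is shorter. It stresses the general principle that monoid duals are the lattice points of cone duals, which keeps the proof parallel to the existing cone theory.

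\textbf{Minor point.} You announce that you will use \Cref{thm:local positive and non-positive}, but your case split is by coordinate signs, not coordinate sums. That theorem, and likewise \Cref{lem:sym-preservation}, are never actually needed in your argument.
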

	\begin{proof}
		The monoids $\ZZ_{\ge 0}^{(\NN)}$, $\ZZ_{\le 0}^{(\NN)}$, and $\ZZ^{(\NN)}$ are clearly $\Sym$-equivariantly finitely generated by $\eb_1$, $-\eb_1$, and $\{\eb_1,-\eb_1\}$, respectively. Thus, it suffices to establish the classification. Note that $\cone(M)\subseteq \RR^{(\NN)} $ is a $\Sym$-invariant cone. By \cite[Proposition 3.1]{L25}, $\cone(M)^*\cap \RR^{(\NN)}$ must be one of the following:
		$$ \{\nub\}, \quad \RR_{\ge 0}^{(\NN)}, \quad \RR_{\le 0}^{(\NN)}, \quad \RR^{(\NN)}.$$
		On the other hand, by \Cref{prop:global-dualproperties}, we have
		$$M^*=\cone(M)^* \cap \ZZ^{\NN}.$$
		Therefore, 
		$$M^* \cap \ZZ^{(\NN)}=\cone(M)^* \cap \ZZ^{(\NN)}=\cone(M)^*\cap \RR^{(\NN)}\cap \ZZ^{(\NN)},$$ 
		which yields the desired classification. 
	\end{proof}
	
	\subsection{The local setting}
	Given a $\Sym$-invariant monoid $M \subseteq \ZZ^{(\NN)}$, \Cref{dualmonoid:not equi f.g} indicates that a direct analysis of its dual $M^*$ is generally intractable. A more effective approach is to pass to the local setting. When $M$ is $\Sym$-equivariantly finitely generated, it can be encoded by a stabilizing chain of local monoids $\M = (M_n)_{n \ge 1}$, which is eventually affine (see \cite[Theorem 5.1]{LRV26}). To understand $M^*$, it is therefore natural to investigate the sequence of dual monoids $\M^* = (M_n^*)_{n \ge 1}$. This perspective is inspired by the study of sequences of dual cones in \cite[Theorem 3.3]{L25}. In this subsection, we develop an analogous framework for monoids and establish a local equivariant Minkowski--Weyl theorem, which provides an explicit description of how the generators of $M_n^*$ evolve as $n$ increases.
	
	We first observe that any symmetric monoid can be equivariantly generated by ordered elements. To formalize this, we introduce some notation.
	For $n\in \NN$, define
	\begin{align*}
		\OO(\ZZ^n)&=\{(u_1,\dots,u_{n})\in \ZZ^{n}\mid u_1\le\dots\le u_{n}\},\\
		\OO^-(\ZZ^n)&=\{(v_1,\dots,v_{n})\in \ZZ^{n}\mid v_1\ge\dots\ge v_{n}\}.
	\end{align*}
	More generally, for any subset $D\subseteq \ZZ^n$, we set $\OO(D)=\OO(\ZZ^n)\cap D.$ 
	
	\begin{lemma}
		\label{prop:ordered-monoid}
		Let $M_n\subseteq\ZZ^n$ be a monoid. Then $\OO(M_n)$ is a submonoid of $M_n$. Moreover:
		\begin{enumerate}
			\item If $M_n$ is affine, then $\OO(M_n)$ is also affine.
			\item If $M_n$ is $\Sym(n)$-invariant, then any generating set of $\OO(M_n)$ is a $\Sym(n)$-equivariant generating set of $M_n$.
		\end{enumerate}
	\end{lemma}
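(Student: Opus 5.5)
First, $\OO(M_n)=\OO(\ZZ^n)\cap M_n$ is a submonoid of $M_n$. The set $\OO(\ZZ^n)$ contains $\nub$. It is closed under addition, because if $u_1\le\dots\le u_n$ and $v_1\le\dots\le v_n$, then $u_1+v_1\le\dots\le u_n+v_n$. Intersecting two monoids gives a monoid.

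For (i), I would write $\OO(\ZZ^n)=C\cap\ZZ^n$, where
\[
C=\{\xb\in\RR^n\mid x_{i+1}-x_i\ge 0 \text{ for } i\in[n-1]\}
\]
is a rational polyhedral cone, finitely generated by \Cref{thm:clasical-Minkowski-Weyl}. Then $\OO(M_n)=C\cap M_n$. Since $M_n$ is affine, $\cone(M_n)$ is a finitely generated rational cone, and so is $D=C\cap\cone(M_n)$ (intersection of finitely many rational halfspaces). The set $D\cap M_n$ is the intersection of an affine monoid with a rational polyhedral cone, which is affine by the standard Gordan-type argument. Concretely, let $\ab_1,\dots,\ab_s$ generate $M_n$ and consider the map $\phi\colon\ZZ_{\ge0}^s\to M_n$ sending $\lambda\mapsto\sum\lambda_i\ab_i$. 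The preimage $\phi^{-1}(C)$ is the set of lattice points of a rational polyhedral cone in $\RR^s_{\ge0}$, namely the linear inequalities pulled back along $\phi$. By \Cref{lem:Gordan} (applied with $N=\ZZ^s_{\ge 0}$, or simply Gordan's lemma), it is therefore an affine monoid. Its image under $\phi$ is exactly $C\cap M_n=\OO(M_n)$, and the image of an affine monoid is affine. This pullback step is the only place needing care: $M_n$ need not be saturated, so one cannot directly intersect with $\ZZ^n$. Passing to $\ZZ_{\ge0}^s$ resolves this.

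For (ii), let $G$ generate $\OO(M_n)$. Take any $\ub\in M_n$. There is $\sigma\in\Sym(n)$ sorting the coordinates of $\ub$ increasingly, so $\sigma(\ub)\in\OO(\ZZ^n)$. Since $M_n$ is $\Sym(n)$-invariant, $\sigma(\ub)\in\OO(M_n)$, so $\sigma(\ub)=\sum m_i\gb_i$ with $\gb_i\in G$ and $m_i\in\ZZ_{\ge0}$. Applying $\sigma^{-1}$, which is linear, gives $\ub=\sum m_i\sigma^{-1}(\gb_i)\in\mndcl(\Sym(n)(G))$. Conversely, $\Sym(n)(G)\subseteq M_n$ by invariance, so $\mndcl(\Sym(n)(G))=M_n$, as claimed.
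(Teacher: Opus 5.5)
Your proof is correct. The submonoid claim and part (ii) follow the paper's argument; the paper simply observes that $M_n=\Sym(n)(\OO(M_n))$.

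For part (i), you and the paper start from the same decomposition $\OO(M_n)=M_n\cap C$, where $C$ is the finitely generated cone cut out by $x_i\le x_{i+1}$. You justify the Gordan-type step differently. The paper appeals in one line to its generalized Gordan lemma (\Cref{lem:Gordan}, taken from Bruns--Gubeladze), i.e.\ to the fact that an affine monoid intersected with a finitely generated cone is affine. That lemma is phrased for saturations $\widehat{M}_N$ of submonoids $M\subseteq N$, so applying it to $M_n\cap C$ tacitly needs a small identification.

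You instead prove this fact from scratch. Pulling back along $\phi\colon\ZZ^s_{\ge0}\to M_n$ exhibits $M_n\cap C$ as the image of the lattice points of a rational polyhedral cone in $\RR^s_{\ge0}$. You then only need the classical Gordan lemma and the fact that homomorphic images of affine monoids are affine. This is more elementary and self-contained, and it shows clearly why a non-saturated $M_n$ causes no trouble.

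Your auxiliary cone $D=C\cap\cone(M_n)$ is never actually used and can be dropped.
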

	
	\begin{proof}
		It is evident that $\OO(M_n)$ is a submonoid of $M_n$. 
		
		(i) By definition,
		\[
		\OO(M_{n})=M_n \cap \bigcap_{i=1}^{n-1} H_i^{+},
		\quad\text{where }\ H_i^{+}=\left\{\left(x_1, \ldots, x_n\right) \in \ZZ^n\mid x_i \leq x_{i+1}\right\}.
		\]
		Since $\bigcap_{i=1}^{n-1} H_i^{+}$ is a finitely generated cone by \Cref{thm:clasical-Minkowski-Weyl}, it follows from \Cref{lem:Gordan} that $\OO(M_{n})$ is an affine monoid if $M_n$ is so.
		
		(ii) If $M_n$ is $\Sym(n)$-invariant, then $M_n=\Sym(n)(\OO(M_n))$. Hence any generating set of $\OO(M_n)$ is a $\Sym(n)$-equivariant generating set of $M_n$.
		\end{proof}
	
	As illustrated in \Cref{example:M-W}, the converse of \Cref{prop:ordered-monoid}(ii) does not hold in general: a $\Sym(n)$-equivariant generating set of $M_n$ need not generate $\OO(M_n)$.
	
	To state our main result, we introduce further notation. For integers $a\le b$, set 
	\[
	J_n(a,b)=\OO([a,b]^n\cap \ZZ^n)
	=\{(u_1,\dots,u_{n})\in \ZZ^{n}\mid a\le u_1\le\dots\le u_{n}\le b\}.
	\]
	Let $\ub=(u_1,\dots,u_n)\in\OO(\ZZ^n)$ and fix an index $i\in[n-1]$. For $m>n$, define the \emph{insertion operator}:
	\[
	\OO_{i,m}(\ub)=
		\{(u_1,\dots,u_{i})\}\times J_{m-n}(u_i,u_{i+1})
		\times\{(u_{i+1},\dots,u_n)\}
		\subseteq \OO(\ZZ^m).
	\] 
	Thus, $\OO_{i,m}(\ub)$ consists of all vectors obtained from $\ub$ by inserting $m-n$ additional entries between the $i$-th and $(i+1)$-st coordinates, while preserving the nondecreasing order. In other words, this set can be defined inductively as follows:
	\[
	\OO_{i,m}(\ub)=\bigcup_{\vb\in\OO_{i,m-1}(\ub)}\OO_{i,m}(\vb).
	\]
	For example, if $\ub=(1,2,4,5)\in \OO(\ZZ^{4})$, then
	\begin{align*}
		\OO_{2,5}(\ub)=\{&(1,2,2,4,5),\, (1,2,3,4,5),\,(1,2,4,4,5)\},\\
		\OO_{2,6}(\ub))=\bigcup_{\vb\in\OO_{2,5}(\ub)}\OO_{2,6}(\vb)
		=\{&(1,2,2,2,4,5),\, (1,2,2,3,4,5),\,(1,2,3,3,4,5),\\
		&(1,2,2,4,4,5),\, (1,2,3,4,4,5),\, (1,2,4,4,4,5)\}.
	\end{align*}
	
	The signs of the coordinates of the original monoid generators play a crucial role in the description of dual monoids. For a subset $A \subseteq \ZZ^n$, define
	\[
	\mu_+(A)=\max_{\ub\in A}|\{i\mid u_i>0\}|
	\quad\text{and}\quad
	\mu_-(A)=\max_{\ub\in A}|\{i\mid u_i<0\}|.
	\]
	For instance, if
	\[
	A=\{(1,0,-3,-4),\, (2,3,4,-1)\},
	\]
	then $\mu_+(A)=3$ and $\mu_-(A)=2$.
	
	We are now ready to state the main result of this section.
	
	\begin{theorem} \label{M-W:local general}
		Let $\M=(M_{n})_{n\geq 1}$ be a stabilizing $\Sym$-invariant chain of monoids with $\ind(\M)=r$, and let $G_r$ be a $\Sym(r)$-equivariant generating set for $M_r$. Choose positive integers $s<t$ such that $s \ge \mu_+(G_r)$ and $t\ge \max \{s+\mu_-(G_r),r\}$. Suppose $E_{t}\subseteq \OO(\ZZ^{t})$ is a generating set for $\OO(M_{t}^*)$. Then for all $n>t$, the monoid $\OO(M_{n}^*)$ is generated by
		\[  
		E_n\defas\bigcup_{\ub\in E_{t}} \OO_{s,n}(\ub).
		\]
		In particular, $E_n$ is a $\Sym(n)$-equivariant generating set for $M_{n}^*$ for all $n>t$.
	\end{theorem}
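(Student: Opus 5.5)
The plan is to first give an explicit "compression" description of $\OO(M_n^*)$ for every $n\ge t$, and then lift a decomposition in $\OO(M_t^*)$ to a decomposition in $\OO(M_n^*)$ by distributing the inserted middle coordinates.

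\textbf{Step 1: reduce to finitely many linear inequalities.} Since $n\ge t\ge r=\ind(\M)$, we have $M_n=\mndcl(\Sym(n)(M_r))=\mndcl(\Sym(n)(G_r))$. Hence
\[
M_n^*=\{\ub\in\ZZ^n\mid \langle \ub,\sigma(\gb)\rangle\ge 0 \text{ for all } \gb\in G_r,\ \sigma\in\Sym(n)\},
\]
where each $\gb\in G_r\subseteq\ZZ^r$ is viewed in $\ZZ^n$.

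\textbf{Step 2: the rearrangement inequality.} For $\ub\in\OO(\ZZ^n)$, the rearrangement inequality (as in the proof of \Cref{duality: local positive}) shows that $\min_\sigma\langle\ub,\sigma(\gb)\rangle$ is attained when $\gb$ is rearranged in non-increasing order, giving $\gb^{\downarrow}\in\OO^-(\ZZ^n)$. The vector $\gb^{\downarrow}$ has at most $\mu_+(G_r)\le s$ positive entries, placed first. It has at most $\mu_-(G_r)\le t-s$ negative entries, placed last, and zeros in between. Therefore $\ub\in\OO(\ZZ^n)$ lies in $M_n^*$ if and only if the compressed vector
\[
c(\ub)=(u_1,\dots,u_s,u_{n-t+s+1},\dots,u_n)\in\OO(\ZZ^t)
\]
satisfies $\langle c(\ub),\gb^{\downarrow}_t\rangle\ge0$ for all $\gb\in G_r$. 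Here $\gb^{\downarrow}_t$ is the non-increasing rearrangement of $\gb$ in $\ZZ^t$; this uses $t\ge r$ so that $\gb$ fits. Applying the same criterion at level $t$, we conclude: $\ub\in\OO(M_n^*)$ if and only if $c(\ub)\in\OO(M_t^*)$. The middle entries $u_{s+1},\dots,u_{n-t+s}$ are constrained only by monotonicity, namely $u_s\le u_{s+1}\le\dots\le u_{n-t+s}\le u_{n-t+s+1}$.

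\textbf{Step 3: $E_n\subseteq\OO(M_n^*)$.} Every element of $\OO_{s,n}(\eb)$ with $\eb\in E_t$ is ordered and compresses to $\eb$. By Step 2 it therefore lies in $\OO(M_n^*)$.

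\textbf{Step 4: generation (the main obstacle).} Take $\ub\in\OO(M_n^*)$ and write $c(\ub)=\sum_{k=1}^{q}\eb^{(k)}$ with $\eb^{(k)}\in E_t$, repetitions allowed. Set $a_k=e^{(k)}_s\le b_k=e^{(k)}_{s+1}$. We must split each middle entry as $x_j\defas u_{s+j}=\sum_k x^{(k)}_j$ with $a_k\le x^{(k)}_j\le b_k$, in such a way that each sequence $j\mapsto x^{(k)}_j$ is nondecreasing. Then each $\eb^{(k)}$ with its middle block $x^{(k)}$ inserted lies in $\OO_{s,n}(\eb^{(k)})$, and these sum to $\ub$.

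Such a splitting exists because $\sum_k a_k=u_s\le x_j\le u_{n-t+s+1}=\sum_k b_k$. I would use a \emph{sequential filling} map. Start every coordinate at $a_k$, then raise $x^{(1)}$ up to $b_1$, then $x^{(2)}$, and so on, until the total equals $x_j$. This map $[\sum a_k,\sum b_k]\cap\ZZ\to\prod_k[a_k,b_k]$ is integral and coordinatewise monotone in the total. So $x_j\le x_{j+1}$ yields $x^{(k)}_j\le x^{(k)}_{j+1}$ for every $k$, which gives the required monotonicity.

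The point needing care is that the insertion position $s$ really separates all constraint-relevant coordinates of every generator. This is exactly guaranteed by $s\ge\mu_+(G_r)$ and $t-s\ge\mu_-(G_r)$, and this is what I would check most carefully in Step 2.

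The final sentence of the theorem then follows from \Cref{prop:ordered-monoid}(ii).
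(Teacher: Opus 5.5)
Your proof is correct. Steps 2--3 are the same rearrangement-plus-zero-block argument as \Cref{lem:OO(u)}, but you organize the generation step differently from the paper.

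The paper inducts on $n$ and inserts one coordinate at a time. It deletes the $(s+1)$-st coordinate of $\xb\in\OO(M_{t+1}^*)$ using \Cref{lem:sucessive dual}, which holds for any $\Sym$-invariant chain and is imported from the cone case in \cite{L25}. It then decomposes $\hat\xb$ in $E_t$ and splits the single entry $x_{s+1}$ by the arbitrary splitting of \Cref{lem:ineq}. Monotonicity of the inserted block comes for free, because each newly inserted entry is bounded above by the previously inserted neighbour.

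You instead prove the compression direction directly from the rearrangement inequality. This gives the explicit description $\OO(M_n^*)=\{\ub\in\OO(\ZZ^n)\mid c(\ub)\in\OO(M_t^*)\}=\bigcup_{\ub\in\OO(M_t^*)}\OO_{s,n}(\ub)$. You then distribute all $n-t$ middle entries at once. That is why you need a splitting that is monotone in the total, namely your sequential filling, rather than the arbitrary splitting of \Cref{lem:ineq}.

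What each route buys:
\begin{itemize}
\item Yours is self-contained, with no appeal to the cone lemma of \cite{L25}, and it makes the structural description of $\OO(M_n^*)$ explicit.
\item The paper's needs only the trivial one-variable splitting, together with a deletion lemma that does not use stability. That deletion lemma is reused later, for instance in \Cref{lem:irreducibility-presevation}.
\end{itemize}
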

	
	\begin{remark} 
		\label{remark:on M-W local}
		Keep the notation of \Cref{M-W:local general}.
		\begin{enumerate}
			\item 
			By \Cref{prop:ordered-monoid}(ii), the assumption that $E_{t}$ is a generating set for $\OO(M_{t}^*)$ is stronger than requiring it to be a $\Sym(t)$-equivariant generating set for $M_{t}^*$. One may ask whether the latter condition alone suffices to ensure that $E_n$ is a $\Sym(n)$-equivariant generating set for $M_{n}^*$ for all $n>t$. In general, this is false, as shown in \Cref{example:M-W}. However, the statement does hold if $t$ is chosen sufficiently large (see \Cref{M-W:nonnegative,M-W:local 2r}).
			
			\item 
			If the chain $\M$ consists of affine monoids, then by \Cref{prop:local-affine}, the dual monoids $M_n^*$ are also affine. Consequently, the submonoids $\OO(M_{n}^*)$ are affine as well by \Cref{prop:ordered-monoid}(i). Thus, if one chooses a finite generating set $E_t$ for $\OO(M_{t}^*)$, then \Cref{M-W:local general} yields finite $\Sym(n)$-equivariant generating sets for $M_n^*$ for all $n>t$.
			\end{enumerate}
	\end{remark}
	
	A convenient specialization of \Cref{M-W:local general} is obtained by choosing $s$ and $t$ based solely on the stability index. The next result is analogous to the equivariant Minkowski--Weyl theorem for cones established in \cite[Theorem 3.3]{L25}.
	
	\begin{corollary}
		\label{cor:M-W-simplified}
		Let $\M=(M_{n})_{n\geq 1}$ be a stabilizing $\Sym$-invariant chain of monoids with $\ind(\M)=r$. Suppose $E_{2r}\subseteq \OO(\ZZ^{2r})$ is a generating set for $\OO(M_{2r}^*)$. Then for all $n>2r$,
		\[  
		E_n\defas\bigcup_{\ub\in E_{2r}} \OO_{r,n}(\ub)
		\]
		is a generating set for $\OO(M_{n}^*)$, and hence a $\Sym(n)$-equivariant generating set for $M_{n}^*$.
	\end{corollary}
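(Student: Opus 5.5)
This is a direct specialization of \Cref{M-W:local general}. We apply the theorem with $s=r$ and $t=2r$, so the only thing to check is that these choices satisfy its hypotheses.

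First, fix the equivariant generating set. Take $G_r=M_r\subseteq\ZZ^r$; it is trivially a $\Sym(r)$-equivariant generating set for $M_r$. Any vector in $\ZZ^r$ has at most $r$ positive and at most $r$ negative coordinates, so
\[
\mu_+(G_r)\le r\quad\text{and}\quad\mu_-(G_r)\le r .
\]
The quantities $\mu_\pm$ are defined as maxima, which could be problematic for an infinite set. Here they are automatically bounded by $r$, so they are finite; if $M_r=\{\nub\}$ they are $0$.

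Second, check the constraints on $s$ and $t$. With $s=r$ we have $s\ge\mu_+(G_r)$. With $t=2r$ we have $t\ge s+\mu_-(G_r)$, since $s+\mu_-(G_r)\le r+r=2r$. Clearly $t\ge r$, and $s<t$ holds because $r\ge1$. Thus $s=r$ and $t=2r$ are admissible choices in \Cref{M-W:local general}.

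Finally, apply the theorem with $E_t=E_{2r}$, which by hypothesis generates $\OO(M_{2r}^*)$. It gives that, for all $n>2r$, the set
\[
E_n=\bigcup_{\ub\in E_{2r}}\OO_{r,n}(\ub)
\]
generates $\OO(M_n^*)$. Each $M_n^*$ is $\Sym(n)$-invariant by \Cref{lem:sym-preservation}, so \Cref{prop:ordered-monoid}(ii) shows that $E_n$ is a $\Sym(n)$-equivariant generating set for $M_n^*$.

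No step here is a genuine obstacle; all the substance lies in \Cref{M-W:local general}. The only subtlety is choosing $G_r$ so that the bounds on $\mu_\pm$ are immediate, and taking $G_r=M_r$ does this, since any subset of $\ZZ^r$ has $\mu_\pm\le r$.
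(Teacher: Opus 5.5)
Your proof is correct and is essentially the paper's own argument: it specializes \Cref{M-W:local general} to $s=r$ and $t=2r$, using that any subset of $\ZZ^r$ has $\mu_+,\mu_-\le r$. Choosing $G_r=M_r$ explicitly is harmless, since any $\Sym(r)$-equivariant generating set satisfies the same bounds.
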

	
	\begin{proof}
		Let $G_r$ be a $\Sym(r)$-equivariant generating set for $M_r$. Then $\mu_+(G_r), \mu_-(G_r) \le r$, so we may take $s=r$ and $t=2r$ in \Cref{M-W:local general}.
	\end{proof}
	
	For nonnegative monoids, \Cref{M-W:local general} can be refined as follows. An analogous result for nonnegative cones appears in \cite[Theorem 4.4]{LR23}.

	\begin{corollary}
		\label{cor:M-W-nonnegative}
		Let $\M=(M_{n})_{n\geq 1}$ be a stabilizing $\Sym$-invariant chain of monoids with $\ind(\M)=r$, and assume that $M_n \subseteq \ZZ^n_{\ge 0}$ for all $n\ge 1$. Suppose $E_{r+1}$ is a generating set for $\OO(M_{r+1}^*)$. Then for all $n>r+1$,
		$$E_n=\bigcup_{\ub\in E_{r+1}} \OO_{r,n}(\ub)$$ 
		is a generating set for $\OO(M_{n}^*)$, and hence a $\Sym(n)$-equivariant generating set for $M_{n}^*$.
	\end{corollary}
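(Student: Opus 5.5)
The plan is to deduce this corollary from \Cref{M-W:local general} by choosing $s$ and $t$ suitably, just as \Cref{cor:M-W-simplified} was obtained. Take $G_r$ to be any $\Sym(r)$-equivariant generating set of $M_r$ contained in $M_r$. Since $M_r\subseteq\ZZ^r_{\ge 0}$, every element of $G_r$ has nonnegative entries. Hence $\mu_-(G_r)=0$, while $\mu_+(G_r)\le r$.

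We may therefore take $s=r$. Then $t\ge\max\{s+\mu_-(G_r),r\}=r$, and also $t>s=r$, so $t=r+1$ is admissible. With these choices, \Cref{M-W:local general} says directly that, if $E_{r+1}\subseteq\OO(\ZZ^{r+1})$ generates $\OO(M_{r+1}^*)$, then $E_n=\bigcup_{\ub\in E_{r+1}}\OO_{r,n}(\ub)$ generates $\OO(M_n^*)$ for all $n>r+1$. The equivariant statement then follows from \Cref{prop:ordered-monoid}(ii) together with \Cref{lem:sym-preservation}, which guarantees that $M_n^*$ is $\Sym(n)$-invariant.

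Two small points need checking.

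\textbf{Ordering of $E_{r+1}$.} The insertion operator $\OO_{r,n}$ is only defined on ordered vectors. This is fine: a generating set of $\OO(M_{r+1}^*)$ is automatically contained in $\OO(M_{r+1}^*)\subseteq\OO(\ZZ^{r+1})$.

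\textbf{The requirement $s<t$ with $s$ a positive integer.} Here $s=r\ge1$ and $t=r+1$, so this holds.

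I do not anticipate a genuine obstacle, since the corollary is a direct specialization. The only step that deserves care is the bound $\mu_-(G_r)=0$. It must be applied to a generating set lying inside $M_r$, and not to an arbitrary subset of $\ZZ^r$. If one prefers the minimal bound, one could instead take $s=\mu_+(G_r)$ when this is positive. However, the statement uses $\OO_{r,\cdot}$, so $s=r$ is the choice matching the claim.
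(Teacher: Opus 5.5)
Your proposal is correct and is essentially the paper's own proof: $G_r\subseteq M_r\subseteq\ZZ^r_{\ge 0}$ gives $\mu_-(G_r)=0$, and always $\mu_+(G_r)\le r$, so $s=r$, $t=r+1$ is an admissible choice in \Cref{M-W:local general}, which then yields the claim directly. Your side checks are fine but already covered by the theorem's statement: $E_{r+1}$ is automatically ordered, and $M_n^*$ is $\Sym(n)$-invariant by \Cref{lem:sym-preservation}.
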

	
	\begin{proof}
		For any $\Sym(r)$-equivariant generating set $G_r$ of $M_r$, we have $\mu_-(G_r)=0$ because $G_r\subseteq \ZZ^r_{\ge 0}$. Morover, it always holds that  $\mu_+(G_r) \le r$. Thus, we can  choose $s=r$ and $t=r+1$ in \Cref{M-W:local general}.
	\end{proof}
	
	Before turning to the proof of \Cref{M-W:local general}, we illustrate the result and its consequences with an example.
	
	\begin{example}\label{example:M-W}
		Let $\M = (M_n)_{n \ge 1}$ be a $\Sym$-invariant chain with $\ind(\M) = 3$.
		\begin{enumerate}[leftmargin=*]
			\item 
			Suppose $M_3$ is $\Sym(3)$-equivariantly generated by 
			\[
			G_3=\{(-1,-1,3),\,(-1,1,1)\}.
			\]
			Then $\mu_+(G_3) = \mu_-(G_3) = 2$. Taking $s=2$ and $t=4$, we compute (using Normaliz \cite{BISV} and Macaulay2 \cite{GS}) a generating set for $\OO(M_4^*)$:
			\[
			E_4=\{(1^{(4)}),\,(1^{(3)},2),\,(2^{(2)},3^{(2)}),\,(2,3^{(3)})\},
			\]
			where $a^{(k)}$ means that the entry $a$ is repeated $k$ times.
			By \Cref{M-W:local general}, for all $n>4$, the monoid $M_n^*$ is   $\Sym(n)$-equivariantly generated by
			$$E_n=\bigcup_{\ub\in E_{4}} \OO_{2,n}(\ub)
			=\{(1^{(n)}),\,(1^{(n-1)},2)\}\cup\{(2^{(k)},3^{(n-k)})\mid k\in[n-2]\}.$$  
			Alternatively, applying \Cref{cor:M-W-simplified}, one may compute a generating set $E_6'$ for $\OO(M_6^*)$. Then for all $n > 6$, the set  
			$$E_n'=\bigcup_{\ub\in E_{6}'} \OO_{3,n}(\ub)$$  
			forms a $\Sym(n)$-equivariant generating set for $M_{n}^*$. Observe that if one takes $E_{6}'= E_{6}$, then $E_{n}'= E_{n}$ for all $n>6$.
			
			On the other hand, consider the $\Sym(4)$-equivariant Hilbert basis
			\[
			H_4=\{(1^{(4)}),\,(1^{(3)},2),\,(2,3^{(3)})\}
			\]
			for $M_{4}^*$. Then for all $n>4$,
			\[
			H_n\defas\bigcup_{\ub\in H_{4}} \OO_{2,n}(\ub)
			=\{(1^{(n)}),\,(1^{(n-1)},2),\,(2,3^{(n-1)})\}
			\]
			is not a $\Sym(n)$-equivariant generating set for $M_{n}^*$. Indeed, one checks using coordinate sums that $(2^{(2)},3^{(n-2)})\in M_{n}^*$ does not lie in the monoid generated by $\Sym(n)(H_n)$. This shows that $H_n$ does not generate $\OO(M_n^*)$, and that \Cref{M-W:local general} fails if one only assumes that $E_t$ is a $\Sym(t)$-equivariant generating set for $M_t^*$.
			
			\item 
			Suppose instead that $M_3$ is $\Sym(3)$-equivariantly generated by
			\[
			G_3=\{(1,1,2),\, (1,2,3)\} \subseteq \ZZ^3_{\ge 0}.
			\]
			Taking $s=3$ and $t=4$, we find a generating set of $\OO(M_4^*)$:
			\[
			E_4=\Big\{\sum_{i=k}^4 \eb_i \mid k\in[4]\Big\} \cup\Big\{(-1^{(2)},5^{(2)}),\,(-1,0,3^{(2)}),\,(-1,1^{(3)})\Big\}.
			\]
			By \Cref{cor:M-W-nonnegative}, for all $n>4$, the monoid $M_n^*$ is   $\Sym(n)$-equivariantly generated by
			\begin{align*}
				E_n&=\bigcup_{\ub\in E_{4}} \OO_{3,n}(\ub)\\
				&=\Big\{\sum_{i=k}^n \eb_i \mid k\in[n]\Big\} \cup\Big\{(-1^{(2)},5^{(n-2)}),\,(-1,0,3^{(n-2)}),\,(-1,1^{(n-1)})\Big\}.
			\end{align*}
		\end{enumerate} 
	\end{example}
	
	Let us now proceed to the proof of \Cref{M-W:local general}. Although its statement parallels the equivariant Minkowski--Weyl theorem for cones in \cite[Theorem 3.3]{L25}, the structure of the generating sets in the monoid setting differs, necessitating a distinct argument. In fact, while the inclusion $\mndcl(E_n)\subseteq \OO(M_n^*)$ can be established by adapting the strategy of \cite[Theorem 3.3]{L25}, proving the reverse inclusion requires additional ideas.
	
	We first establish several auxiliary results. The following lemma is a monoid analogue of \cite[Lemma 3.10]{L25}.
	
	\begin{lemma}
		\label{lem:sucessive dual}
		Let $\M=(M_n)_{n\ge 1}$ be a $\Sym$-invariant chain of monoids. Then for any element $\ub=(u_1,\dots,u_{n+1})\in M_{n+1}^*$ and any index $i\in[n+1]$, we have
		\[
		\hat{\ub}=(u_1,\dots,u_{i-1},u_{i+1},\dots,u_{n+1})\in M_n^*.
		\]
	\end{lemma}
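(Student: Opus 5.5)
The plan is to test $\hat{\ub}$ against an arbitrary $\vb=(v_1,\dots,v_n)\in M_n$ and reduce the required inequality to the membership $\ub\in M_{n+1}^*$. The key is to reposition $\vb$ inside $\ZZ^{n+1}$ so that it pairs with $\ub$ exactly as it pairs with $\hat{\ub}$. Concretely, we want a vector $\wb\in M_{n+1}$ with $\langle \ub,\wb\rangle=\langle\hat{\ub},\vb\rangle$. Then $\ub\in M_{n+1}^*$ gives $\langle\hat{\ub},\vb\rangle\ge 0$, and since $\vb\in M_n$ is arbitrary, $\hat{\ub}\in M_n^*$.

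To build $\wb$, we first use the chain inclusion $M_n\subseteq M_{n+1}$: the vector $\vb$, padded by a zero, lies in $M_{n+1}$ as $(v_1,\dots,v_n,0)$. We then move the zero entry into position $i$. Let $\sigma\in\Sym(n+1)$ be the cyclic permutation sending $j\mapsto j+1$ for $i\le j\le n$ and $n+1\mapsto i$, fixing all $j<i$. Set $\wb=\sigma((v_1,\dots,v_n,0))$. Since $M_{n+1}$ is $\Sym(n+1)$-invariant, $\wb\in M_{n+1}$, and explicitly
\[
\wb=(v_1,\dots,v_{i-1},0,v_i,\dots,v_n).
\]
A direct coordinate comparison then gives $\langle\ub,\wb\rangle=\sum_{j<i}u_jv_j+\sum_{j\ge i}u_{j+1}v_j=\langle\hat{\ub},\vb\rangle$, which is the required identity.

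There is no real obstacle. The only points to watch are the bookkeeping of the index shift in $\sigma$ and the edge case $i=n+1$. In that case $\sigma$ is the identity, and the argument is just the inclusion $M_n\subseteq M_{n+1}$ with the last coordinate dropped.
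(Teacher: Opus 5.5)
Your proof is correct. Pad $\vb\in M_n$ with a zero using $M_n\subseteq M_{n+1}$, move the zero into slot $i$ with the cyclic permutation $\sigma\in\Sym(n+1)$, and invoke $\Sym(n+1)$-invariance of $M_{n+1}$. This gives $\wb\in M_{n+1}$ with $\langle\ub,\wb\rangle=\langle\hat{\ub},\vb\rangle\ge 0$. The index bookkeeping and the case $i=n+1$ both check out.

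The paper takes a different route. It passes to the associated chain of cones $C_n=\cone(M_n)$ and uses $M_{n+1}^*\subseteq C_{n+1}^*$. It then cites the cone version of this statement, \cite[Lemma 3.10]{L25}, to get $\hat{\ub}\in C_n^*$. Finally it returns to the lattice via $M_n^*=C_n^*\cap\ZZ^n$ from \Cref{prop:local-dualproperties}.

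The paper's reduction is two lines and stresses the parallel with the cone theory. However, it relies on an external lemma and on identifying $M_n^*$ with the lattice points of $\cone(M_n)^*$. Your argument is self-contained and elementary, and it proves the cone version verbatim as well. It also makes explicit that only $M_n\subseteq M_{n+1}$ and the $\Sym(n+1)$-invariance of $M_{n+1}$ are used.
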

	
	\begin{proof}
		Let $\C=(C_n)_{n\ge 1}$ be the associated chain of cones, where $C_n=\cone(M_n)$ for $n\ge 1$. Since $\ub\in M_{n+1}^*\subseteq C_{n+1}^*$, it follows from \cite[Lemma 3.10]{L25} that $\hat{\ub}\in C_n^*$. Now applying \Cref{prop:local-dualproperties}, we obtain 
		$
		\hat{\ub}\in C_n^*\cap\ZZ^n= M_n^*.
		$
	\end{proof}
	
	The next result establishes the inclusion $\mndcl(E_n)\subseteq \OO(M_n^*)$. For its proof, we need some additional notation.
	For $\vb=(v_1,\dots,v_n)\in \ZZ^n$, let $\vb^{-}\in \OO^-(\ZZ^n)$ denote the vector obtained by sorting the coordinates of $\vb$ in non-increasing order. If $m\ge n$ and we regard $\vb$ as an element $\tilde{\vb}\in \ZZ^m$ (by appending zeros), then we write $\vb^{-}(m)$ for the vector $\tilde{\vb}^-$. That is, $\vb^{-}(m)$ is obtained from $\vb^{-}$ by inserting $m-n$ zeros between its positive and negative entries. For example, if $\vb=(1,-2,0,3)$, then $\vb^{-}=(3,1,0,-2)$ and $\vb^{-}(6)=(3,1,0,0,0,-2)$.

	\begin{lemma}
		\label{lem:OO(u)}
		Under the assumptions of \Cref{M-W:local general}, we have
		\[
		\OO_{s,n}(\ub) \subseteq \OO(M_{n}^*)
		\quad\text{for all } \ub\in \OO(M_t^*) \text{ and } n>t.
		\]
	\end{lemma}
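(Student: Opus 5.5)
The plan is to reduce membership of each vector in $\OO_{s,n}(\ub)$ in $M_n^*$ to a single pairing against a sorted generator, and then to compare that pairing with a pairing in $\ZZ^t$.

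\textbf{Setup.} Fix $\ub\in\OO(M_t^*)$, $n>t$ and $\wb\in\OO_{s,n}(\ub)$. By construction $\wb\in\OO(\ZZ^n)$, so it only remains to show $\wb\in M_n^*$. Since $\ind(\M)=r$ and $n>t\ge r$, we have $M_n=\mndcl(\Sym(n)(M_r))$. Moreover $M_r=\mndcl(\Sym(r)(G_r))$, so $M_n$ is generated by $\Sym(n)(G_r)$. Hence it suffices to prove
\[
\langle \wb,\sigma(\gb)\rangle\ge 0\quad\text{for all }\gb\in G_r,\ \sigma\in\Sym(n).
\]

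\textbf{Step 1: reduce to one sorted vector.} Because $\wb$ is nondecreasing, the rearrangement inequality (as in the proof of \Cref{duality: local positive}) shows that the minimum of $\langle\wb,\sigma(\gb)\rangle$ over $\sigma\in\Sym(n)$ is attained at the oppositely sorted arrangement. Thus it suffices to show $\langle \wb,\gb^-(n)\rangle\ge 0$ for every $\gb\in G_r$.

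\textbf{Step 2: locate the entries of $\gb^-(n)$.} Let $p$ be the number of positive entries of $\gb$ and $q$ the number of negative entries. Then $p\le\mu_+(G_r)\le s$ and $q\le\mu_-(G_r)\le t-s$. The positive entries of $\gb^-(n)$ occupy positions $1,\dots,p$, the negative entries occupy the last $q$ positions, and all other entries are zero.

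In $\wb$, positions $1,\dots,s$ carry $u_1,\dots,u_s$. The inserted entries occupy positions $s+1,\dots,n-(t-s)$. The last $t-s$ positions carry $u_{s+1},\dots,u_t$.

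Since $p\le s$, the positive entries of $\gb^-(n)$ meet only $u_1,\dots,u_p$. Since $q\le t-s$, the negative entries meet only $u_{t-q+1},\dots,u_t$. Every inserted coordinate is therefore paired with a zero. This is exactly where the hypotheses $s\ge\mu_+(G_r)$ and $t\ge s+\mu_-(G_r)$ enter, and this bookkeeping is the main point to check carefully.

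\textbf{Step 3: transfer to $\ZZ^t$.} The same pairing of entries occurs for $\ub$ and $\gb^-(t)$: the positive block sits at positions $1,\dots,p$ and the negative block at positions $t-q+1,\dots,t$. Consequently
\[
\langle\wb,\gb^-(n)\rangle=\langle\ub,\gb^-(t)\rangle .
\]

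\textbf{Step 4: conclude.} Since $t\ge r$, we have $\gb\in M_r\subseteq M_t$. As $M_t$ is $\Sym(t)$-invariant, also $\gb^-(t)\in M_t$. Because $\ub\in M_t^*$, the right-hand side above is nonnegative. Hence $\wb\in M_n^*$, and together with $\wb\in\OO(\ZZ^n)$ this gives $\wb\in\OO(M_n^*)$.

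The only delicate point is the index bookkeeping in Step 2 showing that the inserted block of $\wb$ lies entirely inside the zero block of $\gb^-(n)$. Step 1 is a standard application of the rearrangement inequality.
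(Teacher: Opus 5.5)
Your proof is correct and follows essentially the same route as the paper's. The rearrangement inequality reduces the problem to the oppositely sorted generators $\vb^-(n)$ with $\vb\in G_r$ (the paper's set $G_n^-$); the hypotheses $s\ge\mu_+(G_r)$ and $t\ge s+\mu_-(G_r)$ force the inserted coordinates to pair only with zeros; and deleting those coordinates gives $\langle\ub,\vb^-(t)\rangle\ge 0$ with $\vb^-(t)\in M_t$, where your Step 2 bookkeeping just spells out the paper's chain $\mu_+(G_r)+1\le s+1\le s+(n-t)\le n-\mu_-(G_r)$.
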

	\begin{proof}
		Assume that $\ub=(u_1,\dots,u_t)\in \OO(M_t^*)$. Let $\ub'\in \OO_{s,n}(\ub)$ for some $n>t$. We need to show that $\ub'\in M_{n}^*$.  For $m\ge r$, we may regard $G_r$ as a subset of $\ZZ^m$. Define
		$$G_m^-=\{\vb^- (m)\mid \vb \in G_r\} \subseteq \OO^-(\ZZ^m).$$
		Evidently, $G_m^-=\tau(G_r)$ for some $\tau\in \Sym(m)$.
		Since $\ind(\M)=r$ and $G_r$ is a $\Sym(r)$-equivariant generating set for $M_r$, it follows that $G_m^-$ is a $\Sym(m)$-equivariant generating set for $M_m$. For any $\vb'\in G_n^-$ and any $\sigma \in \Sym(n)$, the rearrangement inequality (see \cite[Theorem 368]{HLP})  implies that
		\[
		\langle \ub',\sigma(\vb')\rangle \ge \langle \ub',\vb'\rangle.
		\]
		Therefore, to prove $\ub'\in M_{n}^*$, it suffices to show that 
		$$\langle \ub',\vb'\rangle \ge 0
		\quad\text{for all } \vb'\in G_n^-.$$ 
		Since $\ub'\in \OO_{s,n}(\ub)$, there exists $\ab \in J_{n-t}(u_s,u_{s+1})$ such that
		$$\ub'=(u_1,\dots,u_s,\ab,u_{s+1},\dots,u_t).$$
		By construction of $G_n^-$, each $\vb'=(v'_1,\dots,v'_n)\in G_n^-$ satisfies
		$$v'_i=0  \quad \text{ for all } \quad  \mu_+(G_r)+1\le i\le n-\mu_-(G_r).$$
		Since $\mu_+(G_r)+1\le s+1\le s+(n-t)\le n-\mu_-(G_r)$ by the assumption on $s$ and $t$, we have
		$$v'_i=0 \quad \text{for all} \quad s+1 \le i \le s+(n-t).$$ 
		Removing these zero coordinates yields a vector $\vb \in G_t^-\subseteq M_{t}$. Since $\ub\in M_t^*$, we obtain
		\begin{align*}
			\langle \ub',\vb'\rangle  = \langle \ub,\vb\rangle \ge 0,
		\end{align*} 
		as required.
	\end{proof}
	
	We next record a simple yet useful fact, which plays a key role in proving the reverse inclusion $\OO(M_n^*)\subseteq\mndcl(E_n) $. This step distinguishes the proof of \Cref{M-W:local general} from that of \cite[Theorem 3.3]{L25}.
	
	\begin{lemma}
		\label{lem:ineq}
		Let $\ub=(u_1,\dots,u_n),\,\vb=(v_1,\dots,v_n)\in\ZZ^n$ satisfy $u_i\le v_i$ for all $i\in [n]$. Then for any $k \in \ZZ$ with $s(\ub)\le k\le s(\vb)$, there exists $\wb=(w_1,\dots,w_n)\in\ZZ^n$ such that $s(\wb)=k$ and $u_i\le w_i \le v_i$ for all $i\in [n]$.
	\end{lemma}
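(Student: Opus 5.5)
I will prove \Cref{lem:ineq} by a discrete intermediate value argument: move from $\ub$ to $\vb$ one unit step at a time, so that the coordinate sum increases by exactly one at each step.

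Set $d=s(\vb)-s(\ub)=\sum_{i=1}^n (v_i-u_i)\ge 0$. Since $s(\ub)\le k\le s(\vb)$, the integer $j\defas k-s(\ub)$ satisfies $0\le j\le d$. Define a sequence $\wb^{(0)},\dots,\wb^{(d)}$ as follows. Start with $\wb^{(0)}=\ub$. At each stage $\ell<d$, choose the smallest index $i$ with $w^{(\ell)}_i<v_i$; such an index exists because $s(\wb^{(\ell)})=s(\ub)+\ell<s(\vb)$. Put $\wb^{(\ell+1)}=\wb^{(\ell)}+\eb_i$.

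By induction on $\ell$, every $\wb^{(\ell)}$ lies in $\ZZ^n$, satisfies $u_i\le w^{(\ell)}_i\le v_i$ for all $i\in[n]$, and has $s(\wb^{(\ell)})=s(\ub)+\ell$. Taking $\wb=\wb^{(j)}$ gives $s(\wb)=k$ with the required coordinate bounds.

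A closed formula is also available if a non-iterative description is preferred. Let $p$ be the largest index with $\sum_{i\le p}(v_i-u_i)\le j$. Take $w_i=v_i$ for $i\le p$, set $w_{p+1}=u_{p+1}+\bigl(j-\sum_{i\le p}(v_i-u_i)\bigr)$, and take $w_i=u_i$ for $i>p+1$. The bounds on $w_{p+1}$ then follow from the maximality of $p$.

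There is no real obstacle here. The only point needing care is that the chosen index exists at every step, and this follows from the sum bookkeeping: while $\ell<d$ the current sum is strictly below $s(\vb)$, so some coordinate is still strictly below its bound $v_i$.
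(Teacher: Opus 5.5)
Your proof is correct and follows essentially the same route as the paper. The paper sets $d=k-s(\ub)$, asserts that one can choose $\zb\in\ZZ^n$ with $0\le z_i\le v_i-u_i$ and $s(\zb)=d$, and takes $\wb=\ub+\zb$; your greedy unit-step construction (or your closed formula) just makes that choice of $\zb=\wb-\ub$ explicit. One small edge case: in the closed formula, $p=n$ forces $j=d$, so there is no coordinate $w_{p+1}$, and you simply take $\wb=\vb$.
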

	
	\begin{proof}
		Let $d=k-s(\ub)$. Then
		\[
		0\le d\le s(\vb)-s(\ub)=\sum_{i=1}^n(v_i-u_i).
		\]
		Hence, we can choose $\zb=(z_1,\dots,z_n)\in\ZZ^n$ such that $0\le z_i\le v_i-u_i$ for all $i\in [n]$ and $s(\zb)=d$.
		Setting $\wb=\ub+\zb$ yields the desired vector.
	\end{proof}
	
	We are now in the position to prove \Cref{M-W:local general}.
	
	\begin{proof}[Proof of \Cref{M-W:local general}]
		By \Cref{prop:ordered-monoid}(ii), it suffices to show that $E_n$ generates $\OO(M_{n}^*)$ for all $n>t$. Applying \Cref{lem:OO(u)}, we obtain
		$$E_n= \bigcup_{\ub\in E_{t}} \OO_{s,n}(\ub)\subseteq \OO(M_n^*)
		\quad\text{for all } n>t.$$
		Hence, $\mndcl(E_n) \subseteq \OO(M_n^*)$. For the reverse inclusion, 
		we first consider the case $n=t+1$. Let $\xb=(x_1,\dots,x_{t+1})\in \OO(M_{t+1}^*).$ By \Cref{lem:sucessive dual}, removing the $(s+1)$-st coordinate yields
		$$\hat\xb= (x_1,\dots,x_s,x_{s+2},\dots,x_{t+1})\in \OO(M_{t}^*)=\mndcl(E_{t}).$$ 
		Thus, we can write 
		$$\hat\xb=\yb_{1}+\dots+\yb_{m},$$ 
		where $\yb_{i}=(y_{i,1},\dots, y_{i,t})\in E_{t}$ for all $i\in [m]$. We have
		$$x_s=y_{1,s}+\dots+y_{m,s} \le x_{s+1} \le y_{1,s+1}+\dots+y_{m,s+1}=x_{s+2}.$$
		By \Cref{lem:ineq}, there exists $(z_1,\dots,z_m)\in\ZZ^m$ such that
		$$x_{s+1}=\sum_{i=1}^mz_i, \quad \text{and} \quad y_{i,s}\le z_i \le y_{i,s+1} \text{ for all } i\in[m].$$ 
		For $i\in[m]$, let $\zb_i=(y_{i,1},\dots,y_{i,s},z_i,y_{i,s+1},\dots, y_{i,t})$. Then $\zb_i\in E_{t+1}$, showing that
		$$\xb=\zb_1+ \cdots+\zb_m\in \mndcl(E_{t+1}).$$ 
		This yields $\OO(M_{t+1}^*)\subseteq\mndcl(E_{t+1}),$ and hence $\OO(M_{t+1}^*)=\mndcl(E_{t+1})$. Using induction on $n$, we conclude that $E_n$ generates $\OO(M_{n}^*)$ for all $n>t$, as desired. 
	\end{proof}
	
	%-------------------------------------------------------
	\section{Equivariant Hilbert bases}
	\label{sec:equi-Hilbert-bases}
	
	Given the classification of dual monoids of non-positive monoids in \Cref{duality: local non-positive}, the study of the dual sequence $\M^*=(M_n^*)_{n\ge 1}$ reduces to the case where the original chain $\M$ consists of positive monoids. In this situation, \Cref{duality:chain of positive monoids} shows that $M_n^*$ is eventually positive. A natural and central problem is therefore to describe $\Sym(n)$-equivariant Hilbert bases for $M_n^*$ when $n$ is sufficiently large. The goal of this section is to address this problem for stabilizing chains.
	
	A first approach, suggested by \Cref{M-W:local general}, is to start with a generating set $E_t$ of $\OO(M_t^*)$ and propagate it to higher dimensions via the construction
	\[
	E_n=\bigcup_{\ub\in E_t}\OO_{s,n}(\ub).
	\]
	Since $M_n^*$ is positive for all $n>r$, the same holds for $\OO(M_n^*)$. One might therefore expect that choosing $E_t$ to be the Hilbert basis of $\OO(M_t^*)$ would yield an equivariant Hilbert basis for $M_n^*$ for all $n\ge t$. However, the following example shows that this expectation fails in general.
	
	\begin{example}
		\label{ex:too-big}
		Let $\M$ be the chain considered in \Cref{example:M-W}(i). Then
		\[
		E_4=\{(1^{(4)}),\,(1^{(3)},2),\,(2^{(2)},3^{(2)}),\,(2,3^{(3)})\}
		\]
		is a minimal generating set, and hence the Hilbert basis, of $\OO(M_4^*)$. However, for any $n\ge4$, the set
		$$E_n=\bigcup_{\ub\in E_{4}} \OO_{2,n}(\ub)
		=\{(1^{(n)}),\,(1^{(n-1)},2)\}\cup\{(2^{(k)},3^{(n-k)})\mid k\in[n-2]\}$$
		is not  a $\Sym(n)$-equivariant Hilbert basis for $M_n^*$. Indeed, the element $(2^{(n-2)},3^{(2)})\in E_n$ admits the following decomposition:
		$$(2^{(n-2)},3^{(2)})=(1^{(n-1)},2)+(1^{(n-2)},2,1),$$ 
		showing that it is reducible.
	\end{example}
	
	This example indicates that the Hilbert basis of $\OO(M_t^*)$ may be too big for our purposes. Another natural choice for $E_t$ would be a $\Sym(t)$-equivariant Hilbert basis for $M_t^*$. However, as observed in \Cref{example:M-W}(i), such a set may be too small.
	
	The main contribution of this section is the discovery that, once $t$ is sufficiently large, choosing $E_t$ to be a $\Sym(t)$-equivariant Hilbert basis for $M_t^*$ does yield a $\Sym(n)$-equivariant Hilbert basis for $M_n^*$ for all $n>t$.
	
	\begin{theorem}\label{M-W:local 2r}
		Let $\M=(M_{n})_{n\geq 1}$ be a non-trivial stabilizing $\Sym$-invariant chain of positive monoids with $\ind(\M)=r$. Let $H_{4r+1}\subseteq\OO(M_{4r+1}^*)$ be a  $\Sym(4r+1)$-equivariant Hilbert basis for $M_{4r+1}^*$. Then for all $n> 4r+1$, the set
			$$H_n\defas\bigcup_{\ub\in H_{4r+1}} \OO_{2r,n}(\ub)$$
		is a $\Sym(n)$-equivariant Hilbert basis for $M_{n}^*$.
	\end{theorem}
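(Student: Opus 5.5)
Since $n>4r+1>r$, \Cref{duality:chain of positive monoids} shows that $M_n^*$ is positive, so by \Cref{prop:irreducible} its Hilbert basis is $\irr(M_n^*)$. Moreover, $\irr(M_n^*)$ is $\Sym(n)$-invariant and every orbit meets $\OO(\ZZ^n)$ in exactly one point. Hence a subset $H\subseteq\OO(\ZZ^n)$ is a $\Sym(n)$-equivariant Hilbert basis of $M_n^*$ if and only if $H=\OO(\irr(M_n^*))=\irr(M_n^*)\cap\OO(\ZZ^n)$. It therefore suffices to prove
\[
\OO(\irr(M_n^*))=\bigcup_{\ub\in \OO(\irr(M_{4r+1}^*))}\OO_{2r,n}(\ub)\qquad\text{for all } n>4r+1 .
\]
I would establish this by induction on $n$, passing from $n-1$ to $n$ through a single insertion at position $2r+1$. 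The inductive inclusion $\OO_{2r,n}(\vb)\subseteq\bigcup_{\wb}\OO_{2r,n}(\wb)$ works because inserting into a block is transitive, as in the inductive description of $\OO_{i,m}$. Two preservation lemmas are needed, the ones referred to in the introduction.

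\emph{Irreducibility preservation.} Let $t\ge 4r+1$, let $\ub\in\OO(\irr(M_t^*))$, and let $\ub'\in\OO_{2r,t+1}(\ub)$. Then $\ub'\in\irr(M_{t+1}^*)$. Note that $\ub'\in M_{t+1}^*$ by \Cref{lem:OO(u)} with $s=2r$ and $t\ge 4r$, since $\mu_\pm(G_r)\le r$. Suppose $\ub'=\ab+\bb$ with $\ab,\bb\in M_{t+1}^*\setminus\{\nub\}$. The coordinates of $\ab$ and $\bb$ need not be sorted, but the inserted coordinate sits at position $2r+1$, in the middle of a long stretch of positions. The plan is to choose an index $j$ in the range $[2r+1,2r+2]$, or more generally among the inserted-block positions, such that deleting coordinate $j$ from $\ub'$, $\ab$ and $\bb$ returns $\ub$ in a sorted form, that is, a permutation of $\ub$ fixing the order. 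By \Cref{lem:sucessive dual}, the deleted vectors $\hat\ab,\hat\bb$ lie in $M_t^*$, and they give $\ub=\hat\ab+\hat\bb$. Irreducibility of $\ub$ then forces, say, $\hat\ab=\nub$. By positivity (coordinate sums, \Cref{thm:local positive and non-positive}), $\ab$ is then a multiple of $\eb_j$ with a nonzero entry, and $\ab\in M_{t+1}^*$ pairs nonnegatively with all of $M_{t+1}$. Since $M_{t+1}\not\subseteq\ZZ\epb$ contains vectors with both signs in position $j$ up to symmetry, or by a sign argument, this gives a contradiction. The delicate points are choosing $j$ so that deletion commutes with the decomposition, and showing that a dual element supported on a single coordinate cannot occur. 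If necessary I would instead delete a coordinate $j$ where $a_j$ is extremal and use the room provided by $t\ge 4r+1$.

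\emph{Reducibility preservation and surjectivity.} Conversely, take $\xb\in\OO(\irr(M_n^*))$ with $n>4r+1$. Deleting coordinate $2r+1$ gives $\hat\xb\in\OO(M_{n-1}^*)$ by \Cref{lem:sucessive dual}, and $\xb\in\OO_{2r,n}(\hat\xb)$. I must show $\hat\xb$ is irreducible. Suppose $\hat\xb=\yb_1+\yb_2$ is a nontrivial decomposition. Using the Hilbert-basis decompositions in $\OO(M_{n-1}^*)$ (these generate by \Cref{M-W:local general} applied to the sorted Hilbert basis of $\OO(M_{n-1}^*)$), I would lift the decomposition to $\xb$ by inserting entries $z_1,z_2$ with $z_1+z_2=x_{2r+1}$ via \Cref{lem:ineq}, exactly as in the proof of \Cref{M-W:local general}. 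The lifts lie in $M_n^*$ by \Cref{lem:OO(u)}, which contradicts the irreducibility of $\xb$.

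The main obstacle is the reducibility direction. \Cref{ex:too-big} shows that a reducible $\xb$ can have a decomposition into summands that are not sorted compatibly, so $\yb_1$ and $\yb_2$ may fail to be sorted, and \Cref{lem:ineq} needs $y_{i,2r}\le y_{i,2r+1}$. I would first try to rearrange a decomposition of $\hat\xb$ into two sorted summands, using the rearrangement inequality together with the fact that the summands have coordinates equal to those of $\hat\xb$ in the long constant middle stretch. This is where the bound $4r+1$, that is, $2r$ positions on each side of the insertion point, should be needed, and I expect the most work there.
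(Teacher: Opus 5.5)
Your reduction to $\OO(\irr(M_n^*))$ and the induction through a single insertion at position $2r+1$, driven by an irreducibility step and a reducibility step, follow the same skeleton as the paper. The paper phrases this via $E_n$ and \Cref{remark:equi Hb}, but the content is the same. However, both preservation steps have genuine gaps.

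\textbf{Irreducibility.} You correctly reach $\ab=a\eb_{2r+1}$, but the claimed contradiction is false when $M_r\subseteq\ZZ^r_{\ge 0}$ (or $M_r\subseteq\ZZ^r_{\le 0}$). In that case $\eb_j\in M_{t+1}^*$ for every $j$ (cf.\ $\eb_n\in M_n^*$ in \Cref{example:M-W}(ii)), so no sign argument excludes such an $\ab$. The condition $M_{t+1}\not\subseteq\ZZ\epb_{t+1}$ is beside the point. The single-coordinate contradiction works exactly when $M_r\not\subseteq\ZZ^r_{\ge0}$ and $M_r\not\subseteq\ZZ^r_{\le0}$. For the sign-definite case you need an extra structural fact, proved in \Cref{lem:generator-nonnegative}: every ordered irreducible $\ub\ne\eb_t$ of $M_t^*$ satisfies $u_r=\dots=u_t$. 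Then $u_{2r}=u_{2r+1}$, so deleting position $2r$ of $\ub'$ also returns $\ub$. This gives a second decomposition $\ub=a\eb_{2r}+\tilde\bb$, which forces $\tilde\bb=\nub$. Hence $\bb=b\eb_{2r}$ and $\ub=b\eb_{2r}$, which is not ordered. The case $\ub=\eb_t$ is treated separately.

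\textbf{Reducibility.} Your plan to rearrange a decomposition of $\hat\xb$ into two sorted summands cannot work in general. In \Cref{ex:too-big}, $(2^{(n-2)},3^{(2)})$ is reducible in $M_n^*$. Yet a coordinate-sum check shows it is not a sum of two nonzero ordered elements of $M_n^*$: it is a Hilbert basis element of $\OO(M_n^*)$. For the same reason, decompositions over generators of $\OO(M_{n-1}^*)$ give nothing here. Also, $\hat\xb$ need not have a constant middle stretch, and the summands' coordinates are unrelated to those of $\hat\xb$.

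The missing idea is to keep the summands unsorted. Suppose $\ub=\vb+\wb$ in $M_t^*$ with $t\ge 4r+1$, and $a\in[u_{2r},u_{2r+1}]$ is the entry to insert. Let $\vb^+,\wb^+$ be the nondecreasing rearrangements. A pigeonhole count gives
\[
v^+_r+w^+_r\le u_{2r}\quad\text{and}\quad u_{2r+1}\le v^+_{t-r}+w^+_{t-r}.
\]
For the first bound: otherwise each $i\in[2r]$ needs $v_i<v^+_r$ or $w_i<w^+_r$, but each of these index sets has at most $r-1$ elements. The second bound uses the $2r+1$ indices in $[t-2r,t]$, and this is exactly where $t\ge 4r+1$ enters. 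Now \Cref{lem:ineq} gives $a=b+c$ with $b\in[v^+_r,v^+_{t-r}]$ and $c\in[w^+_r,w^+_{t-r}]$. Insert $b$ into $\vb$ and $c$ into $\wb$, both at position $2r+1$. The sorted rearrangement of the lifted $\vb'$ lies in $\OO_{k,t+1}(\vb^+)$ for some $k\in[r,t-r]$. So \Cref{lem:OO(u)}, applied with insertion position $k$ rather than $2r$, together with $\Sym(t+1)$-invariance, gives $\vb'\in M_{t+1}^*$; likewise for $\wb'$. Without this step, your appeal to \Cref{lem:OO(u)} for the lifts is unjustified, since that lemma only concerns insertions into ordered vectors.
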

	
	For nonnegative monoids, this result can be sharpened as follows.
	
	\begin{theorem}
		\label{M-W:nonnegative}
		Let $\M=(M_{n})_{n\geq 1}$ be a non-trivial stabilizing $\Sym$-invariant chain of monoids with $\ind(\M)=r$, and assume that $M_n \subseteq \ZZ^n_{\ge 0}$ for all $n\ge 1$. Let $H_{2r+1}\subseteq\OO(M_{2r+1}^*)$ be a  $\Sym(2r+1)$-equivariant Hilbert basis for $M_{2r+1}^*$. Then for all $n> 2r+1$, the set
		$$H_n\defas\bigcup_{\ub\in H_{2r+1}} \OO_{r,n}(\ub)$$
		is a $\Sym(n)$-equivariant Hilbert basis for $M_{n}^*$.
	\end{theorem}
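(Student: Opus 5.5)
The statement to prove is \Cref{M-W:nonnegative}. I would derive it from \Cref{cor:M-W-nonnegative} together with the irreducibility-preservation and reducibility-preservation lemmas that drive \Cref{M-W:local 2r}. The nonnegativity $\mu_-(G_r)=0$ lets every dimension threshold be shifted down, from $4r+1$ with slot $2r$ to $2r+1$ with slot $r$.

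\textbf{Positivity and the Hilbert basis.} Since $M_n\subseteq\ZZ^n_{\ge0}$ and the chain is non-trivial, \Cref{duality:chain of positive monoids} shows that $M_n^*$ is positive for all $n>r$. Its Hilbert basis is therefore $\irr(M_n^*)$, which is $\Sym(n)$-invariant by \Cref{prop:irreducible}. Hence $\OO(\irr(M_n^*))$ is the unique $\Sym(n)$-equivariant Hilbert basis contained in $\OO(\ZZ^n)$, and $H_n=\OO(\irr(M_n^*))$ for $n=2r+1$.

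\textbf{Reduction.} I would prove by induction on $n\ge 2r+1$ that
\[
\bigcup_{\ub\in H_{2r+1}}\OO_{r,n}(\ub)=\OO(\irr(M_n^*)).
\]
Because the insertion operator is transitive, $\OO_{r,n+1}(\ub)=\bigcup_{\vb\in\OO_{r,n}(\ub)}\OO_{r,n+1}(\vb)$, so it suffices to show, for $n\ge 2r+1$ and $\vb\in\OO(M_n^*)$, two things:
\begin{enumerate}
\item every element of $\OO_{r,n+1}(\vb)$ is irreducible whenever $\vb$ is irreducible;
\item every irreducible $\xb\in\OO(M_{n+1}^*)$ arises this way, i.e.\ the vector $\hat\xb$ obtained by deleting the $(r+1)$-st coordinate is irreducible in $M_n^*$.
\end{enumerate}
Membership $\OO_{r,n+1}(\vb)\subseteq M_{n+1}^*$ follows from \Cref{lem:OO(u)} with $s=r$, $t=n$. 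The deletion $\hat\xb\in M_n^*$ comes from \Cref{lem:sucessive dual}.

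\textbf{Step (ii), via reducibility preservation.} Suppose $\hat\xb=\yb+\yb'$ is a nontrivial decomposition in $M_n^*$. Using $\Sym(n)$-invariance and the rearrangement inequality, I would try to arrange that $\yb$ and $\yb'$ can be taken ordered. This is the delicate point, since an ordered vector need not split into ordered summands. Once both summands are ordered, \Cref{lem:ineq} lifts the decomposition to $\xb$, exactly as in the proof of \Cref{M-W:local general}. That makes $\xb$ reducible.

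\textbf{Step (i), via irreducibility preservation.} Suppose $\wb\in\OO_{r,n+1}(\vb)$ decomposes as $\wb=\ab+\bb$ with $\ab,\bb\in M_{n+1}^*\setminus\{\nub\}$.
\begin{itemize}
\item First choose the deleted coordinate so that both $\hat\ab$ and $\hat\bb$ remain nonzero. With $n+1\ge 2r+2$ coordinates, there is a block of at least $n+1-r\ge r+2$ positions beyond the first $r$, so such a choice exists.
\item Deleting that coordinate gives $\vb=\hat\ab+\hat\bb$ in $M_n^*$. Positivity of the dual, via coordinate sums as in \Cref{thm:local positive and non-positive}, rules out zero summands.
\item Membership $\hat\ab,\hat\bb\in M_n^*$ again comes from \Cref{lem:sucessive dual}.
\end{itemize}

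\textbf{Main obstacle.} The hard part is ensuring that the decomposition survives deletion at the specific slot $r+1$ and that both summands stay nontrivial. Summands of an ordered vector need not be ordered, and deleting a coordinate may kill a summand whose support lies in the deleted position. To handle this I would exploit that a generator $\vb'$ of $M_n$ in $\OO^-$ form has zeros in positions $r+1,\dots,n$. Hence any entry of a summand lying in the middle block can be permuted within that block without changing its pairing with the generators of $M_n$, and so the summand stays in $M_n^*$. This is where $\mu_-=0$ is used, and where the margin $n\ge 2r+1$ guarantees that two disjoint blocks of length $r$ exist to absorb the supports of both summands.
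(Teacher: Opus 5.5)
Your architecture is sound and essentially the paper's. Since $H_{2r+1}=\OO(\irr(M_{2r+1}^*))$, the theorem reduces to two claims: (i) $\OO_{r,n+1}$ preserves irreducibility, and (ii) deleting the $(r+1)$-st coordinate of an ordered irreducible element of $M_{n+1}^*$ gives an irreducible element of $M_n^*$. But neither step is actually proved. Both depend on a structural fact you never state: for $n\ge r+1$, every $\ub\in\OO(M_n^*)\cap\irr(M_n^*)$ other than $\eb_n$ satisfies $u_r=u_{r+1}=\cdots=u_n$ (\Cref{lem:generator-nonnegative}(iii)). The paper derives this from the explicit generating set $\{(u_1,\dots,u_r,u_r)\mid \ub\in E_r\}\cup\{\eb_{r+1}\}$ of $\OO(M_{r+1}^*)$, propagated by $\OO_{r,n}$. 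This is where nonnegativity enters, beyond the membership statement of \Cref{lem:OO(u)}.

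In your step (i), deleting a coordinate of $\wb\in\OO_{r,n+1}(\vb)$ returns $\vb$ only if that entry equals the inserted value $a$. If $v_r<a<v_{r+1}$, position $r+1$ is the only admissible choice. Then $\ab=\alpha\eb_{r+1}$ with $\alpha>0$ cannot be ruled out, since it does lie in $M_{n+1}^*$ when $M_{n+1}\subseteq\ZZ^{n+1}_{\ge0}$. With the constant tail, all positions $r,\dots,n+1$ of $\wb$ carry the value $v_r$. You can then delete one avoiding the at most two positions where $\ab$ or $\bb$ is a multiple of a unit vector. The case $\vb=\eb_n$ is handled separately via the coordinate sum.

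In step (ii), the summands of $\hat\xb=\yb+\zb$ cannot be made ordered, because permuting one summand changes the sum. Your fallback, that permuting entries inside the zero block of the generators keeps a summand in $M_n^*$, addresses a non-issue: membership under permutations is automatic by $\Sym(n)$-invariance. What must be shown is that $x_{r+1}=b+c$ with $(y_1,\dots,y_r,b,y_{r+1},\dots,y_n)$ and $(z_1,\dots,z_r,c,z_{r+1},\dots,z_n)$ both in $M_{n+1}^*$. Apply \Cref{lem:OO(u)} to the nondecreasing rearrangements $\yb^+,\zb^+$ with an insertion slot $k\ge r$. This gives membership whenever $y^+_r\le b\le y^+_n$ and $z^+_r\le c\le z^+_n$. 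The upper bound $x_{r+1}\le y^+_n+z^+_n$ is automatic, so by \Cref{lem:ineq} everything reduces to $y^+_r+z^+_r\le x_{r+1}$.

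The paper's pigeonhole argument only gives $y^+_r+z^+_r\le\hat x_{2r}=x_{2r+1}$. It uses that each of $\{i\mid y_i<y^+_r\}$ and $\{i\mid z_i<z^+_r\}$ has at most $r-1$ elements, while $y_i+z_i\le\hat x_{2r}$ for all $i\in[2r]$. It is the constant tail that forces $x_{2r+1}=x_{r+1}$. This is \Cref{lem:reducibility-presevation}(ii), and it is where the threshold $2r+1$ comes from. Without the constant-tail lemma, the lift at slot $r$ is unjustified, and the proposal does not yet prove the theorem.
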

	
	To prove these results, we first present a simple method for constructing equivariant Hilbert bases from generating sets. Recall that the set of irreducible elements of a monoid $M_n$ is denoted by $\irr(M_n)$. For a positive $\Sym(n)$-invariant monoid, this set coincides with its Hilbert basis (see \Cref{def:equivariant-Hilbert-basis}).
	
	\begin{lemma}
		\label{remark:equi Hb}
		Let $M_n\subseteq\ZZ^n$ be a $\Sym(n)$-invariant positive monoid, and suppose that $E_n$ is an arbitrary generating set of $\OO(M_{n})$. Then
		\[
		H_n\defas E_n\cap \irr(M_n)
		\]
		is the unique $\Sym(n)$-equivariant Hilbert basis for $M_n$ contained in $\OO(M_{n})$. In particular, $H_n$ is independent of the choice of the generating set $E_n$ of $\OO(M_{n})$.
	\end{lemma}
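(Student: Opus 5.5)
The plan is to verify two things: $\Sym(n)(H_n)=\irr(M_n)$, and minimality of $H_n$. After that, uniqueness and independence of $E_n$ follow from a characterization of $H_n$ that does not mention $E_n$.

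First I will show that $H_n=\OO(\irr(M_n))=\irr(M_n)\cap\OO(\ZZ^n)$. Since $E_n\subseteq\OO(M_n)$, the inclusion $H_n\subseteq\OO(\irr(M_n))$ is immediate. For the reverse inclusion, take $\ub\in\irr(M_n)$ with $\ub\in\OO(\ZZ^n)$. Then $\ub\in\OO(M_n)=\mndcl(E_n)$, so $\ub=\ab_1+\cdots+\ab_k$ with $\ab_j\in E_n\subseteq M_n$. Because $M_n$ is positive, $\unit(M_n)=\{\nub\}$, and we may discard zero summands. Irreducibility of $\ub$ in $M_n$ then forces $k=1$, so $\ub\in E_n\cap\irr(M_n)=H_n$. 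This identity already shows that $H_n$ does not depend on $E_n$.

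Next I will show that $\Sym(n)(H_n)=\irr(M_n)$. By \Cref{prop:irreducible}, $\irr(M_n)$ is $\Sym(n)$-invariant, which gives $\Sym(n)(H_n)\subseteq\irr(M_n)$. Conversely, every $\vb\in\irr(M_n)$ has a sorted rearrangement $\sigma(\vb)\in\OO(\ZZ^n)$ for a suitable $\sigma\in\Sym(n)$. This rearrangement is still irreducible, so it lies in $H_n$, and hence $\vb\in\Sym(n)(H_n)$. By \Cref{def:equivariant-Hilbert-basis}, $\Hc_{M_n}=\irr(M_n)$, so $\Sym(n)(H_n)=\Hc_{M_n}$.

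For minimality and uniqueness, the key observation is that each $\Sym(n)$-orbit meets $\OO(\ZZ^n)$ in exactly one point, namely the sorted vector. So if $H'\subsetneq H_n$ and $\ub\in H_n\setminus H'$, then $\ub\notin\Sym(n)(H')$, because the only ordered element of the orbit of $\ub$ is $\ub$ itself, while the elements of $H'$ are ordered and distinct from $\ub$. Hence $H_n$ is minimal. Now let $H\subseteq\OO(M_n)$ be any $\Sym(n)$-equivariant Hilbert basis. Every $\vb\in\irr(M_n)\cap\OO(\ZZ^n)$ lies in the orbit of some $\hb\in H$. Both $\vb$ and $\hb$ are ordered, so $\vb=\hb$, giving $H_n\subseteq H$. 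Since $H\subseteq\irr(M_n)\cap\OO(\ZZ^n)=H_n$, we conclude $H=H_n$.

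The main point needing care is the uniqueness of ordered representatives in orbits; it is elementary. Positivity is used only to ensure that an irreducible element cannot be written nontrivially with unit summands.
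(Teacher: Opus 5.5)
Your proof is correct and uses the same three ingredients as the paper's: irreducible elements of a positive monoid lie in every generating set, $\irr(M_n)$ is $\Sym(n)$-invariant, and each $\Sym(n)$-orbit has a unique ordered representative. The difference is only organizational. You first isolate the intrinsic identity $H_n=\OO(M_n)\cap\irr(M_n)$, which gives independence of $E_n$ at once and is exactly the identity the paper later invokes in \Cref{lem:generator-nonnegative}(iii) and in the proof of \Cref{M-W:local 2r}; you also check minimality directly, whereas the paper deduces it from uniqueness.
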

	
	\begin{proof}
		We first show that $\irr(M_n)=\Sym(n)(H_n)$. Since $\irr(M_n)$ is $\Sym(n)$-invariant by \Cref{prop:irreducible}, the inclusion $\irr(M_n)\supseteq\Sym(n)(H_n)$ is clear. For the reverse inclusion, note that $E_n$ is a $\Sym(n)$-equivariant generating set of $M_n$ by \Cref{prop:ordered-monoid}. It follows that $\irr(M_n)\subseteq \Sym(n)(E_n)$. Thus, for any $\ub\in\irr(M_n)$, there exists $\sigma\in\Sym(n)$ such that $\sigma(\ub)\in E_n$. This implies $\sigma(\ub)\in E_n\cap \irr(M_n)=H_n$, and hence $\ub\in\Sym(n)(H_n)$.
		
		Now assume that $H_n'\subseteq\OO(M_{n})$ is another set with $\irr(M_n)=\Sym(n)(H_n')$. Then for any $\ub\in H_n$, there exist $\vb\in H_n'$ and $\sigma\in\Sym(n)$ such that $\ub=\sigma(\vb)$. Since both $\ub$ and $\vb$ are ordered, we must have $\ub=\vb$. Therefore $H_n\subseteq H_n'$, and by symmetry $H_n'=H_n$. This proves the uniqueness of $H_n$. 
		
		Finally, the uniqueness of $H_n$ implies that it must be a minimal set with the property that $\irr(M_n)=\Sym(n)(H_n)$. Therefore, $H_n$ is a $\Sym(n)$-equivariant Hilbert basis for $M_n$.
	\end{proof}

	The following example illustrates the construction.
	
	\begin{example}
		We first consider the chain $\M$ from \Cref{example:M-W}(i). The set $E_{13}$ constructed there is a generating set of $\OO(M_{13}^*)$. As shown in \Cref{ex:too-big}, the element $(2^{(11)},3^{(2)})$ of $E_{13}$ is reducible. By using coordinate sums, one can check that all the remaining elements of $E_{13}$ are irreducible. Applying \Cref{remark:equi Hb}, we obtain the following $\Sym(13)$-equivariant Hilbert basis for $M_{13}^*$:
		$$H_{13}=E_{13}\cap \irr(M_{13}^*) =\{(1^{(13)}),\,(1^{(12)},2)\}\cup\{(2^{(k)},3^{(13-k)})\mid k\in[10]\}.$$
		By \Cref{M-W:local 2r}, the set
		\[
		H_n=\bigcup_{\ub\in H_{13}} \OO_{6,n}(\ub)
		=\{(1^{(n)}),\,(1^{(n-1)},2)\}\cup\{(2^{(k)},3^{(n-k)})\mid k\in[n-3]\}
		\]
		is a $\Sym(n)$-equivariant Hilbert basis for $M_{n}^*$ for all $n> 13$.
		
		Next, let $\M$ be the chain of nonnegative monoids considered in \Cref{example:M-W}(ii). As above, we find the following $\Sym(7)$-equivariant Hilbert basis for $M_{7}^*$:
		$$H_{7}=\{\eb_7,\,(-1^{(2)},5^{(5)}),\,(-1,0,3^{(5)}),\,(-1,1^{(6)})\}.$$
		By \Cref{M-W:nonnegative}, the set
		\[
		H_n=\bigcup_{\ub\in H_{7}} \OO_{3,n}(\ub)
		=\{\eb_n,\,(-1^{(2)},5^{(n-2)}),\,(-1,0,3^{(n-2)}),\,(-1,1^{(n-1)})\}
		\]
		is a $\Sym(n)$-equivariant Hilbert basis for $M_{n}^*$ for all $n> 7$.
	\end{example}
	
	Let us continue with the proofs of \Cref{M-W:local 2r,M-W:nonnegative}. To treat the case of nonnegative monoids, we need the following refinement of \Cref{cor:M-W-nonnegative}. An analogous construction for cones can be found in \cite[Theorem 4.4]{LR23}.
	
	\begin{lemma}
		\label{lem:generator-nonnegative}
		Let $\M=(M_{n})_{n\geq 1}$ be a non-trivial stabilizing $\Sym$-invariant chain of monoids with $\ind(\M)=r$, and assume that $M_n \subseteq \ZZ^n_{\ge 0}$ for all $n\ge 1$. Let $E_r$ be a generating set of $\OO(M_{r}^*)$. Define
		\begin{align*}
			E_{r+1}&=\{(u_1,\dots,u_r,u_r) \mid (u_1,\dots,u_r) \in E_r\} \cup \{\eb_{r+1}\},\\
			E_n&=\bigcup_{\ub\in E_{r+1}} \OO_{r,n}(\ub)
			\quad\text{for } n>r+1.
		\end{align*}
		Then the following statements hold for all $n\ge r+1$:
		\begin{enumerate}
			\item 
			$E_n$ is a generating set of $\OO(M_{n}^*)$.
			\item 
			$\eb_n\in \OO(M_{n}^*)\cap \irr(M_{n}^*)$.
			\item 
			If $\ub=(u_1,\dots,u_n)\in \OO(M_{n}^*)\cap \irr(M_{n}^*)\setminus\{\eb_n\}$, then $u_r=u_{r+1}=\cdots=u_n$.
		\end{enumerate} 
	\end{lemma}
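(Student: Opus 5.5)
The plan is to first reduce membership in $\OO(M_n^*)$ to a condition on the first $r$ coordinates, and then read off (i)–(iii) from explicit decompositions. Let $G_r$ be a $\Sym(r)$-equivariant generating set of $M_r$. Since $\ind(\M)=r$, the monoid $M_n$ is generated by $\Sym(n)(G_r)$ for $n\ge r$, where $G_r$ is regarded inside $\ZZ^n$. Every $\vb\in G_r$ is nonnegative with at most $r$ nonzero entries. Take $\ub\in\OO(\ZZ^n)$, that is, with nondecreasing coordinates. By the rearrangement inequality, as in the proof of \Cref{lem:OO(u)}, the minimum of $\langle\ub,\sigma(\vb)\rangle$ over $\sigma\in\Sym(n)$ is attained when the entries of $\vb$ are sorted non-increasingly into the first $r$ positions. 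The key claim is therefore
\[
\ub\in\OO(M_n^*) \iff \ub\in\OO(\ZZ^n)\ \text{and}\ (u_1,\dots,u_r)\in\OO(M_r^*).
\]
For the direction ``$\Leftarrow$'', the same rearrangement argument applied in $\ZZ^r$ gives $\langle(u_1,\dots,u_r),\vb^-\rangle\ge 0$, which is exactly the minimal pairing above. For ``$\Rightarrow$'', we can use \Cref{lem:sucessive dual} repeatedly, deleting the last coordinates.

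\emph{Proof of (i).} First identify $E_n$. Inserting $n-r-1$ entries between positions $r$ and $r+1$ of $\eb_{r+1}$ gives exactly the tails $\tb_k\defas\eb_k+\dots+\eb_n$ for $r+1\le k\le n$. Inserting into $(u_1,\dots,u_r,u_r)$ gives only $(u_1,\dots,u_r,u_r,\dots,u_r)$. By the claim, all these vectors lie in $\OO(M_n^*)$, since their first $r$ coordinates are either $0$ or lie in $E_r$. Conversely, let $\xb\in\OO(M_n^*)$. Then
\[
\xb=(x_1,\dots,x_r,x_r,\dots,x_r)+\sum_{k=r+1}^n (x_k-x_{k-1})\,\tb_k ,
\]
and every coefficient $x_k-x_{k-1}$ is nonnegative. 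Write $(x_1,\dots,x_r)\in\OO(M_r^*)$ as a sum of elements of $E_r$, and extend each summand by repeating its last coordinate. This extension is additive, so it expresses the first term as a sum of elements of $E_n$. This proves (i).

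\emph{Proof of (ii).} Since $M_n\subseteq\ZZ^n_{\ge0}$ is positive and the chain is non-trivial, \Cref{duality:chain of positive monoids} gives that $M_n^*$ is positive for $n>r$. Moreover $\eb_n\in M_n^*$ and $s(\eb_n)=1>0$. By \Cref{thm:local positive and non-positive}, every nonzero element of $M_n^*$ therefore has positive coordinate sum. Any nontrivial decomposition of $\eb_n$ would split the sum $1$ into two positive integers, which is impossible. Hence $\eb_n$ is irreducible.

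\emph{Proof of (iii).} Suppose $\ub\in\OO(M_n^*)$ is irreducible and $u_k>u_{k-1}$ for some $k>r$. Set $\wb=\ub-\tb_k$. Then $\wb$ is still ordered and has the same first $r$ coordinates as $\ub$, so $\wb\in M_n^*$ by the claim. This gives $\ub=\wb+\tb_k$ with $\tb_k\ne\nub$. Positivity and irreducibility force $\wb=\nub$, so $\ub=\tb_k$ for some $k$. If $k<n$, then $\tb_k=\eb_n+(\eb_k+\dots+\eb_{n-1})$. The second summand is nonnegative, hence lies in $M_n^*$ because $M_n\subseteq\ZZ^n_{\ge0}$. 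So $\tb_k$ is reducible, a contradiction. Therefore $\ub=\eb_n$, and otherwise $u_r=\dots=u_n$.

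The main obstacle is the reduction claim: it has to be verified carefully in both directions using the rearrangement inequality and the stability index. Everything after that is elementary bookkeeping.
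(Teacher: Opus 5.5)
Your proof is correct. For (i) and (iii) it takes a more direct route than the paper; (ii) is the same argument.

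\textbf{The paper's route.} For (i), the paper only treats $n=r+1$ and then invokes \Cref{cor:M-W-nonnegative}, hence the general \Cref{M-W:local general}, to propagate to larger $n$. It obtains $E_{r+1}\subseteq M_{r+1}^*$ from the cone result \cite[Lemma 4.5]{LR23}. It then writes $\vb\in\OO(M_{r+1}^*)$ as $(v_1,\dots,v_r,v_r)+(v_{r+1}-v_r)\eb_{r+1}$. For (iii), it uses \Cref{remark:equi Hb} to get $\OO(M_n^*)\cap\irr(M_n^*)=E_n\cap\irr(M_n^*)\subseteq E_n$, and reads the conclusion off the shape of $E_n$.

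\textbf{Your route.} Your reduction claim says that for ordered $\ub$ and $n\ge r$, $\ub\in M_n^*$ if and only if $(u_1,\dots,u_r)\in M_r^*$. You justify it correctly from $M_n=\mndcl(\Sym(n)(G_r))$, $G_r\subseteq\ZZ^r_{\ge 0}$, and the rearrangement inequality. This one claim replaces both the appeal to the main theorem and the external cone lemma. It gives $E_n\subseteq\OO(M_n^*)$ and the telescoping decomposition along the tails $\eb_k+\dots+\eb_n$ for every $n$ at once. It also turns (iii) into a one-step peeling argument that does not need \Cref{remark:equi Hb}.

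\textbf{Gain over the paper.} Your argument makes explicit something the paper leaves tacit. The tails $\eb_k+\dots+\eb_n$ with $r+1\le k<n$ lie in $E_n$ but violate $u_r=\dots=u_n$. So the paper's ``follows from the construction of $E_n$'' actually needs your observation that they are reducible as $\eb_n+(\eb_k+\dots+\eb_{n-1})$. What the paper's route buys in return is brevity, since it reuses machinery already in place.

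\textbf{One silent step.} Both arguments apply \Cref{duality:chain of positive monoids} for all $n\ge r+1$, which needs $M_r\neq\{\nub\}$. This holds, because $M_r=\{\nub\}$ together with $\ind(\M)=r$ would make the whole chain trivial.
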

	
	\begin{proof}
		(i) By \Cref{cor:M-W-nonnegative}, it suffices to show that $E_{r+1}$  generates $\OO(M_{r+1}^*)$. Consider the associated chain of cones $\C=(C_{n})_{n\geq 1}$, where $C_n=\cone(M_n)$. By \cite[Lemma 4.5]{LR23}, we have $E_{r+1}\subseteq C_{r+1}^*$ . Hence, by \Cref{prop:local-dualproperties},
		\[
		E_{r+1}\subseteq C_{r+1}^*\cap\ZZ^{r+1}=M_{r+1}^*,
		\]
		so $E_{r+1}\subseteq\OO(M_{r+1}^*)$, and therefore $\mndcl(E_{r+1})\subseteq\OO(M_{r+1}^*).$ For the reverse inclusion, let $\vb=(v_1,\dots,v_r,v_{r+1})\in \OO(M_{r+1}^*)$. Then $\hat{\vb}=(v_1,\dots,v_r)\in \OO(M_{r+1}^*)=\mndcl(E_{r})$. From the construction of $E_{r+1}$ it is easily seen that
		\[
		\tilde{\vb}=(v_1,\dots,v_r, v_r)\in\mndcl(E_{r+1}).
		\]
		Since $v_{r+1}\ge v_r$, we obtain
		\[
		\vb=\tilde{\vb}+(v_{r+1}-v_r)\eb_{r+1}\in\mndcl(E_{r+1}),
		\]
		and hence $\OO(M_{r+1}^*)\subseteq\mndcl(E_{r+1})$. Thus, equality holds.
		
		(ii) Since $M_n\subseteq \ZZ^n_{\ge 0}$, we have $\eb_n\in M_n^*$, and thus $\eb_n\in \OO(M_n^*)$. It remains to show that  $\eb_n\in \irr(M_{n}^*)$. Suppose $\eb_n=\ub+\vb$ with $\ub,\vb\in M_n^*\setminus\{\nub\}$. Then
		\begin{equation*}
			%\label{eq:e_n}
			1=s(\eb_n)=s(\ub)+s(\vb).
		\end{equation*}
		As $M_n^*$ is positive (by \Cref{duality:chain of positive monoids}), \Cref{thm:local positive and non-positive} implies that $s(\ub)$ and $s(\vb)$ are integers of the same sign, a contradiction. Hence, $\eb_n$ is irreducible.
		
		(iii) Since both $E_n$ and $\OO(M_n^*)$ generate $\OO(M_n^*)$, \Cref{remark:equi Hb} yields
		\[
		\OO(M_{n}^*)\cap \irr(M_{n}^*)=E_n\cap \irr(M_{n}^*)\subseteq E_n.
		\] 
		The conclusion follows from the construction of $E_n$.
	\end{proof}
	
	The proofs of \Cref{M-W:local 2r,M-W:nonnegative} rely crucially on the fact that the insertion operator $\OO_{i,m}$ preserves both irreducibility and reducibility under suitable conditions. We begin with the preservation of irreducibility.
	
	\begin{lemma}
		\label{lem:irreducibility-presevation}
		Let $\M=(M_{n})_{n\geq 1}$ be a non-trivial stabilizing $\Sym$-invariant chain of positive monoids with $\ind(\M)=r$. Let $s,t\in\NN$ satisfy $s\ge r$ and $t\ge s+r$. Assume that $\ub\in\OO(M_{t}^*)$. 
		If $\ub\in \irr(M_{t}^*)$, then $\OO_{s,n}(\ub)\subseteq \irr(M_{n}^*)$ for all $n>t$.
	\end{lemma}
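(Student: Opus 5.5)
We argue by contradiction and reduce to dimension $t$ by deleting inserted coordinates. Fix $n>t$ and $\ub'\in\OO_{s,n}(\ub)$, so that
\[
\ub'=(u_1,\dots,u_s,\ab,u_{s+1},\dots,u_t)
\quad\text{with}\quad
\ab\in J_{n-t}(u_s,u_{s+1}).
\]
By \Cref{lem:OO(u)} (applicable since $\mu_\pm(G_r)\le r$, so $s\ge r$ and $t\ge s+r$ meet its hypotheses), $\ub'\in M_n^*$. Moreover, $M_n^*$ is positive for $n>r$ by \Cref{duality:chain of positive monoids}, so the only unit is $\nub$ and $\ub'\neq\nub$ because $\ub\ne\nub$. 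Suppose now that $\ub'=\vb+\wb$ with $\vb,\wb\in M_n^*\setminus\{\nub\}$. The goal is to produce from this a nontrivial decomposition of $\ub$ in $M_t^*$, contradicting irreducibility.

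The natural route is to delete the $n-t$ inserted coordinates, i.e.\ positions $s+1,\dots,s+n-t$, from $\vb$ and $\wb$. By iterating \Cref{lem:sucessive dual}, this gives $\hat\vb,\hat\wb\in M_t^*$ with $\hat\vb+\hat\wb=\ub$. The difficulty is that $\vb,\wb$ need not be ordered, so a deletion could conceivably produce $\hat\vb=\nub$ with $\vb\ne\nub$, and then the decomposition of $\ub$ would be trivial. To avoid this, I would choose the deleted positions cleverly rather than fixing them. Since $\vb\neq\nub$ lies in a positive monoid, \Cref{thm:local positive and non-positive} gives $s(\vb)\neq 0$, so $\vb$ has some nonzero coordinate, and likewise for $\wb$. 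The total number of positions is $n$, and we must keep $t$ of them; we need the kept positions to carry a nonzero coordinate of $\vb$ and one of $\wb$. In addition, the deleted coordinates of $\ub'$ must be exactly $n-t$ entries whose removal leaves $\ub$. Because $\ub'$ is ordered and each entry of $\ab$ lies in $[u_s,u_{s+1}]$, any $n-t$ positions among the block consisting of $\ab$ together with positions $s$ and $s+1$ can be chosen, as long as the multiset left behind in that block is $\{u_s,u_{s+1}\}$ read back in sorted order.

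This flexibility seems insufficient in general, since $\vb$ may be supported entirely inside the inserted block. I would therefore use a different reduction that does not rely on choosing positions. Sort the decomposition using the symmetry: apply the same permutation to $\vb$ and $\wb$ only if it fixes $\ub'$, which means permuting within blocks of equal entries of $\ub'$. Within the inserted block, entries of $\ub'$ are constant on runs, and we may average the information. The cleaner idea is an exchange argument. If $\vb$ restricted to the inserted positions is not determined, we look at the first coordinate vector obtained by deleting positions where $\vb$ is zero whenever possible. Concretely, by positivity and $t\ge s+r\ge 2r$, we count the positions: $\vb$ and $\wb$ each need only one kept nonzero coordinate. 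Among the $n-t+2$ positions $\{s,\dots,s+n-t+1\}$ we keep exactly two. If $\vb$ is zero outside this window, then $\wb$ agrees with $\ub'$ outside it. In that case I would compare coordinate sums and use the rearrangement inequality against the generators $G_n^-$ from the proof of \Cref{lem:OO(u)} (which vanish in the middle window) to show that $\vb$ can be replaced by a vector with support meeting the kept positions.

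The main obstacle is exactly this degenerate case, where one summand is supported only in the inserted window. My first attempt there would be as follows. The generators of $M_n$ in $G_n^-$ are zero on positions $r+1,\dots,n-r$, which contain the whole window, and $M_n^*$ is $\Sym(n)$-invariant. Hence pairings of $\vb$ with elements of $\Sym(n)(G_n^-)$ only see at most $2r$ coordinates. Using this, I would show that replacing $\vb$ by a suitable permutation $\sigma(\vb)$, and correspondingly $\wb$ by $\ub'-\sigma(\vb)$, stays in $M_n^*$ and moves support out of the window. If that fails, I would fall back to the induction $n\to n+1$, deleting a single inserted coordinate at a time and using $t\ge s+r$ to guarantee that among the $r+1$ candidate entries equal on a run some deletion keeps both summands nonzero.
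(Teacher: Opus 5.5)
You correctly identify the one real difficulty: after deleting the inserted coordinates, one summand may collapse to $\nub$. But the proposal never resolves it, and the repairs you sketch do not work. If you replace $\vb$ by $\sigma(\vb)$ and $\wb$ by $\ub'-\sigma(\vb)$, nothing guarantees that $\ub'-\sigma(\vb)$ stays in $M_n^*$. Your fallback assumes that $t\ge s+r$ produces a run of $r+1$ equal entries of $\ub$ around position $s$, and nothing in the hypotheses gives this. Suppose, say, $u_{s+1}-u_s\ge 2$ and the inserted entry $a$ satisfies $u_s<a<u_{s+1}$. Then position $s+1$ is the \emph{only} coordinate of $\xb\in\OO_{s,t+1}(\ub)$ whose deletion returns $\ub$, so there is no choice of deleted positions to exploit. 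A decomposition $\xb=\yb+\zb$ with $\yb$ supported at $s+1$ must therefore be ruled out by a different argument. Deleting all $n-t$ inserted coordinates at once is also the wrong reduction, because the collapsing summand can then be spread over the whole window.

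The paper supplies the missing ingredients. First, induct one coordinate at a time. For $n=t+1$, positivity of $M_t^*$ and \Cref{lem:sucessive dual}, applied after deleting coordinate $s+1$, give $\ub=\hat{\yb}+\hat{\zb}$ with, say, $\hat{\yb}=\nub$, so $\yb=y\eb_{s+1}$ with $y\neq 0$. Then split according to the signs occurring in $M_r$.
\begin{itemize}
\item If $M_r$ contains an element with a positive coordinate and one with a negative coordinate, then $\Sym(t+1)$-invariance of $M_{t+1}\supseteq M_r$ shows that no nonzero multiple of a unit vector lies in $M_{t+1}^*$. This contradiction needs no run structure at all.
\item If $M_r\subseteq\ZZ^r_{\ge 0}$ (the case $\ZZ^r_{\le 0}$ is symmetric), then \Cref{lem:generator-nonnegative}(iii) gives $u_r=\cdots=u_t$ for irreducible $\ub\in\OO(M_t^*)\setminus\{\eb_t\}$. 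This is the run you wanted, but it comes from the structure of the dual in the sign-definite case, not from $t\ge s+r$. Deleting the $s$-th coordinate of $\xb$ then also returns $\ub$. Irreducibility forces $\zb=z\eb_s$, hence $\ub=z\eb_s$, which is impossible for an ordered element of $M_t^*$.
\end{itemize}
Note that excluding $\ub=\eb_t$ uses $u_{s+1}=0$, i.e.\ $s+1<t$. When $r=1$ and $t=s+1$ the statement genuinely fails. For example, take $M_n=\ZZ^n_{\ge 0}$, $s=1$, $t=2$: the element $(0,1,1)=\eb_2+\eb_3\in\OO_{1,3}(\eb_2)$ is reducible in $M_3^*=\ZZ^3_{\ge 0}$. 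This edge case does not affect the paper's applications, where $t\ge s+r+1$.
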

	
	\begin{proof}
		By \Cref{duality:chain of positive monoids}, $M_{n}^*$ is a positive monoid for all $n>r$. Moreover, $\OO_{s,n}(\ub)\subseteq M_{n}^*$ by \Cref{lem:OO(u)}.  Using induction on $n$, it suffices to prove the statement for $n=t+1$. Assume to the contrary that there exists $\xb\in \OO_{s,t+1}(\ub)$ with $\xb\not\in\irr(M_{t+1}^*)$. Then 
		\begin{equation}
			\label{eq:reducible}
			\xb=\yb+\zb
		\end{equation}
		for some $\yb,\,\zb\in M_{t+1}^*\setminus\{\nub\}$. By definition, $\ub=\hat{\xb}$, where $\hat{\xb}$ is obtained from $\xb$ by deleting the its $(s+1)$-st coordinate. Applying this deletion to \eqref{eq:reducible}, we obtain
		\begin{equation}
			\label{eq:reducible-hat}
		\ub=\hat{\xb}=\hat\yb+\hat\zb,
	\end{equation}
		where $\hat{\yb},\,\hat{\zb}\in M_{t}^*$ by \Cref{lem:sucessive dual}. Since $\ub\in \irr(M_{t}^*)$, we may assume that $\hat\yb=\nub$. This implies $\yb=y_{s+1}\eb_{s+1}$ for some $y_{s+1}\in\ZZ\setminus\{0\}.$
		We distinguish the following cases:
		
		\emph{Case 1}: $M_r \nsubseteq \ZZ^r_{\ge 0}$ and $M_r \nsubseteq \ZZ^r_{\le 0}$.  
		Then $M_{t+1}$ is neither contained in $\ZZ^{t+1}_{\ge 0}$ nor in $\ZZ^{t+1}_{\le 0}$, since $M_r\subseteq M_{t+1}$. Consequently,
		\[
		a\eb_i \notin M_{t+1}^*
		\quad\text{for all } i\in [t+1] \text{ and } a\in\ZZ\setminus\{0\},
		\]
		which contradicts the fact that $\yb=y_{s+1}\eb_{s+1}\in M_{t+1}^*$.
		
		\emph{Case 2}: $M_r\subseteq \ZZ^r_{\ge 0}$. Then $M_k \subseteq \ZZ^k_{\ge 0}$ for all $k\ge 1$. Write $\ub=(u_1,\dots,u_t)$. Since $\ub\ne \eb_t$ (otherwise, $\xb=\eb_{t+1}\in\irr(M_{t+1}^*)$), \Cref{lem:generator-nonnegative} implies that $u_r=\cdots=u_t$. As $r\le s<t$, deleting the $s$-th coordinate of $\xb$ yields $\tilde{\xb}=\ub$. Thus, \eqref{eq:reducible} gives
		\[
		\ub=\tilde{\xb}=\tilde{\yb}+\tilde{\zb},
		\]
		where $\tilde{\yb},\,\tilde{\zb}\in M_{t}^*$ by \Cref{lem:sucessive dual}. Since $\ub\in \irr(M_{t}^*)$, either $\tilde{\yb}=\nub$ or $\tilde{\zb}=\nub$. If $\tilde{\yb}=\nub$, then together with $\hat\yb=\nub$, we must have $\yb=\nub$, a contradiction. If $\tilde{\zb}=\nub$, then $\zb=z_s\eb_s$ for some $z_{s}\in\ZZ\setminus\{0\}$, and hence $\hat{\zb}=z_s\eb_s$. From \eqref{eq:reducible-hat}, we obtain
		\[
		\ub=\hat\yb+\hat\zb=\hat\zb=z_s\eb_s,
		\]
		contradicting the assumption $\ub\in \OO(M_t^*)$.
		
		\emph{Case 3}: $M_r\subseteq \ZZ^r_{\le 0}$. This reduces to Case 2 by considering the chain $(-M_{n})_{n\geq 1}$.
	\end{proof}
	
	Next, we turn to the preservation of reducibility. Recall that the set of reducible elements of a monoid $M$ is denoted by $\re(M)$.
	
	\begin{lemma}
		\label{lem:reducibility-presevation}
		Let $\M=(M_{n})_{n\geq 1}$ be a non-trivial stabilizing $\Sym$-invariant chain of positive monoids with $\ind(\M)=r$. Let $s,t\in\NN$, and assume that $\ub=(u_1,\dots,u_t)\in\OO(M_{t}^*)$.
		If $\ub\in \re(M_{t}^*)$, then $\OO_{s,n}(\ub)\subseteq \re(M_{n}^*)$ for all $n>t$, provided that one of the following holds:
		\begin{enumerate}
			\item $s\ge 2r$ and $t\ge s+2r+1$;
			\item $s\ge r$, $t\ge s+r+1$, $u_r=u_t$, and $M_r \subseteq \ZZ^r_{\ge 0}$.
		\end{enumerate}
	\end{lemma}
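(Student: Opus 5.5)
The plan is to reduce to $n=t+1$ and then lift a decomposition of $\ub$ to a decomposition of each element of $\OO_{s,t+1}(\ub)$ by inserting one new coordinate into each summand. First, the reduction. If every element of $\OO_{s,t+1}(\ub)$ is reducible, then every such element lies in $\OO(M_{t+1}^*)\cap\re(M_{t+1}^*)$. The hypotheses on $(s,t)$ still hold with $t$ replaced by $t+1$. Hence induction on $n$, via $\OO_{s,n}(\ub)=\bigcup_{\vb\in\OO_{s,n-1}(\ub)}\OO_{s,n}(\vb)$, gives the claim for all $n>t$. By \Cref{duality:chain of positive monoids}, $M_n^*$ is positive for $n>r$, so "reducible" simply means a sum of two nonzero elements of $M_n^*$.

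So let $\ub=\yb+\zb$ with $\yb,\zb\in M_t^*\setminus\{\nub\}$, and let $\xb=(u_1,\dots,u_s,a,u_{s+1},\dots,u_t)$ with $u_s\le a\le u_{s+1}$. I look for $\yb',\zb'\in M_{t+1}^*$ that arise from $\yb,\zb$ by inserting entries $b,c$ at position $s+1$ with $b+c=a$. Then $\xb=\yb'+\zb'$, and $\yb',\zb'\neq\nub$ because deleting the inserted coordinate returns $\yb,\zb$.

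The summands $\yb,\zb$ need not be ordered. However, by \Cref{lem:sym-preservation}, $\yb'\in M_{t+1}^*$ iff its sorted rearrangement lies in $\OO(M_{t+1}^*)$. This sorted rearrangement is obtained from the sorted vector $\yb^{\uparrow}$ of $\yb$ by inserting $b$ at some position. By \Cref{lem:OO(u)}, applied with $\mu_\pm(G_r)\le r$, inserting a value between $y^{\uparrow}_{p}$ and $y^{\uparrow}_{p+1}$ keeps us in the dual whenever $r\le p\le t-r$. Taking the union over these $p$, every $b\in[y^{\uparrow}_r,\,y^{\uparrow}_{t-r+1}]$ is admissible, and the same holds for $c$ with $\zb^{\uparrow}$. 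By \Cref{lem:ineq} with $m=2$, suitable integers $b,c$ exist as soon as
\[
y^{\uparrow}_r+z^{\uparrow}_r\le u_s \quad\text{and}\quad u_{s+1}\le y^{\uparrow}_{t-r+1}+z^{\uparrow}_{t-r+1}.
\]

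The main step is to verify these two inequalities with Weyl-type inequalities for order statistics of sums. Since $\ub$ is ordered, $u_k$ is the $k$-th smallest entry of $\yb+\zb$. For such sums one has $u_k\ge y^{\uparrow}_p+z^{\uparrow}_q$ when $p+q=k+1$, and $u_k\le y^{\uparrow}_p+z^{\uparrow}_q$ when $p+q=k+t$. These follow from a pigeonhole argument on index sets. For the lower inequality I take $k=s$, $p=r$, $q=s+1-r$; then $q\ge r$ because $s\ge 2r$, so
\[
u_s\ge y^{\uparrow}_r+z^{\uparrow}_{s+1-r}\ge y^{\uparrow}_r+z^{\uparrow}_r.
\]
For the upper inequality I take $k=s+1$, $p=t-r+1$, $q=s+r$; then $q\le t-r+1$ because $t\ge s+2r+1$, so
\[
u_{s+1}\le y^{\uparrow}_{t-r+1}+z^{\uparrow}_{s+r}\le y^{\uparrow}_{t-r+1}+z^{\uparrow}_{t-r+1}.
\]
This settles case (i). The expected obstacle is getting the admissible-position bookkeeping right, namely the off-by-one between "insert after position $p$" and the ranges $\mu_+(G_r)+1\le i\le n-\mu_-(G_r)$ in \Cref{lem:OO(u)}. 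The extra slack in $t\ge s+2r+1$ should absorb this.

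For case (ii) the chain is nonnegative, so $\mu_-(G_r)=0$ and the admissible interval for $b$ becomes $[y^{\uparrow}_r,\,+\infty)$. Only the lower inequality $y^{\uparrow}_r+z^{\uparrow}_r\le u_s$ is needed, and it holds as above once the Weyl bound is available with $p=r$ and $q\ge r$. The hypothesis $u_r=u_t$ gives $u_s=u_{s+1}$, so $a=u_s$ is forced. If the lower Weyl bound needs $s\ge 2r$, I would instead insert $b,c$ by duplicating entries of $\yb,\zb$ at a coordinate $j$ with $r\le j\le t$ and $y_j+z_j=u_j=u_s$. After sorting, such a duplicate sits in a position where \Cref{lem:OO(u)} applies, using $t\ge s+r+1$ and $s\ge r$.
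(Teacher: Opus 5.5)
Your case (i) is correct and is essentially the paper's argument. The paper proves $v^+_r+w^+_r\le u_s$ and $u_{s+1}\le v^+_{t-r}+w^+_{t-r}$ by the same pigeonhole count that underlies your Weyl-type bounds. Your window $[y^{\uparrow}_r,y^{\uparrow}_{t-r+1}]$ is also right, since \Cref{lem:OO(u)} covers insertion after every position $p$ with $r\le p\le t-r$.

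\textbf{The gap is in case (ii).} Your lower bound at $k=s$ requires $q=s+1-r\ge r$, i.e.\ $s\ge 2r-1$. Case (ii) only assumes $s\ge r$, and \Cref{M-W:nonnegative} uses exactly $s=r$, $t=2r+1$, so this case cannot be skipped.

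The duplication fallback does not close the gap. The index $j\in[r,t]$ says nothing about the rank of $y_j$ in $\yb$ or of $z_j$ in $\zb$, whereas \Cref{lem:OO(u)} needs $y_j\ge y^{\uparrow}_r$ and $z_j\ge z^{\uparrow}_r$. Here is a concrete failure:
\begin{itemize}
\item Let $r=4$, $M_n=\mndcl(\Sym(n)(\eb_1+\eb_2+\eb_3+\eb_4))$ for $n\ge 4$, and $M_n=\{\nub\}$ for $n<4$. Then $M_n^*$ consists of the vectors whose four smallest entries have nonnegative sum.
\item Take $s=4$, $t=9$ and $\ub=(6^{(3)},7^{(6)})=\yb+\zb$ with $\yb=(3,3,3,-1,-1,-1,8,8,8)$ and $\zb=(3,3,3,8,8,8,-1,-1,-1)$.
\item Every $j\in[4,9]$ has $y_j=-1$ or $z_j=-1$. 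Duplicating that entry gives a vector with four entries equal to $-1$, which is not in $M_{10}^*$.
\item The indices $j\le 3$ are unusable, since $u_j=6\ne 7=a$.
\end{itemize}

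\textbf{The fix.} Use $u_r=\cdots=u_t$ to move the index rather than the insertion. Since $r\le 2r-1\le t$, you have $u_s=u_{2r-1}$. Your own Weyl bound at $k=2r-1$ with $p=q=r$ then gives $y^{\uparrow}_r+z^{\uparrow}_r\le u_{2r-1}=a$. The paper argues the same way with $u_{2r}$.

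At the upper end, you do not need insertion after the last coordinate. \Cref{lem:OO(u)} does not literally cover that, since it requires $s<t$. Because $\mu_-(G_r)=0$, every position $p\in[r,t-1]$ is admissible, which gives the window $[y^{\uparrow}_r,y^{\uparrow}_t]$. The bound $a=y_s+z_s\le y^{\uparrow}_t+z^{\uparrow}_t$ is then automatic. In the example above this yields $b=3$, $c=4$.

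Finally, for the induction in case (ii), state that the hypothesis $u_r=u_t$ persists for elements of $\OO_{s,t+1}(\ub)$. This holds because $s\ge r$.
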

	
	\begin{proof}
		By induction on $n$, it suffices to consider the case $n=t+1$. Since $\ub\in \re(M_{t}^*)$, we can write
		\begin{equation}
			\label{eq:u=v+w}
			\ub=\vb+\wb
		\end{equation}
		with $\vb=(v_1,\dots,v_{t}),\, \wb=(w_1,\dots,w_{t})\in M_{t}^*\setminus \{\nub\}$. Let $\ub'\in\OO_{s,t+1}(\ub)$, so that
		\[
		\ub'=(u_1,\dots,u_s,a,u_{s+1},\dots,u_t)
		\quad\text{for some } a\in[u_s,\,u_{s+1}].
		\] 
		We aim to construct $\vb',\wb'\in M_{t+1}^*\setminus\{\nub\}$ such that $\ub'=\vb'+\wb'$.
		Let $\vb^+=(v_1^+,\dots,v_t^+)$ and $\wb^+=(w_1^+,\dots,w_t^+)$ denote the non-decreasing rearrangements of $\vb$ and $\wb$, respectively.
		
		(i)  Suppose $s\ge 2r$ and $t\ge s+2r+1$. We claim that 
		$$v^+_r + w^+_r \le u_{s}\le u_{s+1}\le v^+_{t-r}+w^+_{t-r}.$$
		Indeed, since $\ub\in\OO(M_{t}^*)$, it follows from \eqref{eq:u=v+w} that 
		$$v_i+w_i\le u_{2r}\le u_{s}\le u_{s+1}\le u_{t-2r}\le v_j+w_j$$
		for all $i\in [2r]$ and $j\in [t-2r,\,t]$.
		%Thus, $\vb+\wb$ has at least $2r$ coordinates that does not exceed $u_{s}$ and also at least $2r+2$ coordinates that is at least $u_{s+1}$.
		Now suppose, to the contrary, that $v^+_r + w^+_r > u_{s}$. Then for every $i\in[2r]$,
		$$v_i+w_i\le u_{s}<v^+_r + w^+_r,
		$$ 
		so at least one of the inequalities $v_i < v_r^+$ or $w_i < w_r^+$ must hold. Hence,
		\[
		[2r]\subseteq\{i\mid v_i<v^+_r\}\cup\{i\mid w_i<w^+_r\}.
		\] 
		However, by definition of the ordered coordinates, each of the sets on the right has cardinality at most $r-1$. Thus their union has size at most $2r-2$, which is impossible. Therefore, $v_r^+ + w_r^+ \le u_s$. The inequality $u_{s+1} \le v_{t-r}^+ + w_{t-r}^+$ follows by a similar argument applied to the indices $j\in[t-2r,\,t]$.
		
		Since $a\in[u_s,\,u_{s+1}]$, \Cref{lem:ineq} implies that there exist $b,\, c \in \NN$ with $b\in [v^+_r,\,v^+_{t-r}]$ and $c\in [w^+_r,\,w^+_{t-r}]$ such that 
		$a=b+c.$
		Define
		\[
		\vb'=(v_1,\dots,v_s,b,v_{s+1},\dots,v_t)
		\quad\text{and}\quad
		\wb'=(w_1,\dots,w_s,c,w_{s+1},\dots,w_t).
		\]
		Then 
		$\ub'=\vb'+\wb'.$
		It remains to show that $\vb',\, \wb'\in M_{t+1}^*\setminus \{\nub\}$.
		Since  $b\in [v^+_r,\,v^+_{t-r}]$, we have $(\vb')^+\in \OO_{k,t+1}(\vb^+)$ for some $k \in [r,\, t-r]$. As $k\ge r$ and $t\ge k+r$, it follows from \Cref{lem:OO(u)} that $\OO_{k,t+1}(\vb^+)\subseteq M_{t+1}^*$. Hence, $(\vb')^+ \in M_{t+1}^*,$ and so $\vb'\in M_{t+1}^*$. Similarly, $\wb'\in M_{t+1}^*$. Finally, we have $\vb',\wb'\ne \nub$ since $\vb,\wb\ne \nub$.
		
		(ii) Since $s, s+1, 2r \in [r,\, t]$ and $\ub \in \OO(M_t^*)$ with $u_r = u_t$, we obtain $u_{2r}=u_s=a=u_{s+1}$. Arguing as in (i), this gives $v^+_r + w^+_r \le u_{2r}=a.$ 
		We show that $a\le v^+_{t-1} + w^+_{t-1}$. From 
		\eqref{eq:u=v+w} and $u_r = \cdots = u_t$, we have
		\[
		a=v_r+w_r=v_{r+1}+w_{r+1}=\cdots=v_t+w_t.
		\]  
		Reordering if necessary, assume that
		\[
		v_r\le v_{r+1}\le \cdots\le v_t.
		\]
		Then
		\[
		w_t\le w_{t-1}\le \cdots\le w_r.
		\]
		It follows that $v^+_{t-1} \ge v_{t-1}$ and $w^+_{t-1}\ge w_{r+1}$. 
		Hence,
		\[
		v^+_{t-1} + w^+_{t-1}\ge v_{t-1}+w_{r+1}\ge v_{r+1}+w_{r+1}=a.
		\]
		Now by \Cref{lem:ineq}, we can write $a = b + c$ with $b\in [v^+_r,\,v^+_{t-1}]$ and $c\in [w^+_r,\,w^+_{t-1}]$. Defining $\vb'$ and $\wb'$ as in (i), we obtain $\ub' = \vb' + \wb'$ with $\vb', \wb' \ne \nub$.
		Finally, since $b \in [v_r^+, v_{t-1}^+]$, we have $(\vb')^+ \in \OO_{k,t+1}(\vb^+)$ for some $k \in [r,\, t-1]$. As $M_r \subseteq \ZZ_{\ge 0}^r$, \Cref{lem:OO(u)} implies $(\vb')^+ \in M_{t+1}^*$, and hence $\vb' \in M_{t+1}^*$. Similarly, $\wb' \in M_{t+1}^*$. This completes the proof.
	\end{proof}
	
	We are now ready to prove \Cref{M-W:local 2r,M-W:nonnegative}.
	
	\begin{proof}[Proof of \Cref{M-W:local 2r}]
		Let $E_{2r}$ be a generating set of  $\OO(M_{2r}^*)$ and define
		\[  
		E_n=\bigcup_{\ub\in E_{2r}} \OO_{r,n}(\ub)
		\quad\text{for } n>2r.
		\]
		Then by \Cref{cor:M-W-simplified}, $E_n$ generates $\OO(M_n^*)$ for all $n>2r$.
		Since $H_{4r+1}$ is the  $\Sym(4r+1)$-equivariant Hilbert basis for $M_{4r+1}^*$ contained in $\OO(M_{4r+1}^*)$, \Cref{remark:equi Hb} yields 
		$$H_{4r+1} = E_{4r+1}\cap \irr(M_{4r+1}^*).$$ 
		We need to show that
		\[
		H_{n} = E_{n}\cap \irr(M_{n}^*)
		\quad\text{for all }n>4r+1.
		\]
		Fix $n>4r+1$. By \Cref{lem:irreducibility-presevation}, irreducibility is preserved under $\OO_{2r,n}$, and hence 
		$$H_{n} =\bigcup_{\ub\in H_{4r+1}} \OO_{2r,n}(\ub)\subseteq \OO(M_{n}^*)\cap \irr(M_{n}^*).$$ 
		This gives
		\[
		H_{n} \subseteq E_{n}\cap \irr(M_{n}^*)
		\]
		since $\OO(M_n^*)\cap \irr(M_n^*)=E_n\cap \irr(M_n^*)$ by \Cref{remark:equi Hb}.
		For the reverse inclusion, it suffices to prove that $E_n\setminus H_n\subseteq\re(M_{n}^*).$ Let $\vb=(v_1,\dots,v_n)\in E_n\setminus H_n$, and set 
		$$\wb=(v_1,\dots,v_{2r},v_{n-2r},\dots,v_n).$$
		Then $\wb\in E_{4r+1}$ and $\vb\in\OO_{2r,n}(\wb)$. Moreover, since $H_n=\bigcup_{\ub\in H_{4r+1}} \OO_{2r,n}(\ub)$ and $\vb\not\in H_{n}$, we must have $\wb\not\in H_{4r+1}$. Therefore, $\wb\in E_{4r+1}\setminus H_{4r+1}$, and hence $\wb\in \re(M_{4r+1}^*)$ because $H_{4r+1} = E_{4r+1}\cap \irr(M_{4r+1}^*)$. Applying \Cref{lem:reducibility-presevation}(i), we get $\vb\in \re(M_n^*)$, as required.
	\end{proof}
	
	\begin{proof}[Proof of \Cref{M-W:nonnegative}]
		We use similar reasoning as in the proof of \Cref{M-W:local 2r}. Let $E_r$ be a generating set of $\OO(M_r^*)$, and define $E_n$ for $n\ge r+1$ as in \Cref{lem:generator-nonnegative}. Then \Cref{remark:equi Hb} gives
		\[
		H_{2r+1} = E_{2r+1}\cap \irr(M_{2r+1}^*),
		\]
		and we aim to prove
		\[
		H_{n} = E_{n}\cap \irr(M_{n}^*)
		\quad\text{for all } n>2r+1.
		\]
		The inclusion $H_{n} \subseteq E_{n}\cap \irr(M_{n}^*)$ follows as before. For the reverse inclusion, note that $\eb_{2r+1}\in H_{2r+1}$ by \Cref{lem:generator-nonnegative}, hence $\eb_n\in H_n$. Let $\vb=(v_1,\dots,v_n)\in E_n\setminus H_n$. By construction of $E_n$, we have
		\[
		v_r=v_{r+1}=\cdots=v_n.
		\]
		Set $\wb=(v_1,\dots,v_{2r+1})$. Arguing as in the previous proof, one gets $\wb\in \re(M_{2r+1}^*)$.
		Finally, by \Cref{lem:reducibility-presevation}(ii), we obtain $\vb\in\OO_{r,n}(\wb)\subseteq\re(M_{n}^*)$, which completes the proof.
	\end{proof}
	
	We conclude with a discussion of Hilbert bases for the ordered dual monoids $\OO(M_n^*)$. We first show that the construction in \Cref{M-W:local general} preserves the minimality of generating sets.
	
	\begin{proposition}
		\label{lem:minimal set}
		Let $\M=(M_{n})_{n\geq 1}$ be a stabilizing $\Sym$-invariant chain of monoids with $\ind(\M)=r$, and let $G_r$ be a $\Sym(r)$-equivariant generating set for $M_r$. Choose integers $s<t$ such that $s \ge \mu_+(G_r)$ and $t\ge \max \{s+\mu_-(G_r),r\}$. Suppose $E_{t}$ is a generating set for $\OO(M_{t}^*)$.
		For $n>t$, define
		$$E_n=\bigcup_{\ub\in E_{t}} \OO_{s,n}(\ub).$$ 
		Then the following are equivalent:
		\begin{enumerate}
			\item $E_t$ is a minimal generating set of $\OO(M_{t}^*)$;
			\item $E_m$ is a minimal generating set of $\OO(M_{m}^*)$ for some $m> t$;
			\item $E_n$ is a minimal generating set of $\OO(M_{n}^*)$ for all $n\ge t$.
		\end{enumerate}
	\end{proposition}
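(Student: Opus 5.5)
The plan is to compare $E_n$ with $E_t$ through the coordinate-deletion map. Let $\pi_n\colon\ZZ^n\to\ZZ^t$ delete the coordinates in positions $s+1,\dots,s+(n-t)$. It is additive, and by \Cref{lem:sucessive dual} (applied repeatedly) it preserves the ordering, so $\pi_n(\OO(M_n^*))\subseteq\OO(M_t^*)$. Moreover $\pi_n(E_n)=E_t$, and $\pi_n(\vb)=\ub$ for every $\vb\in\OO_{s,n}(\ub)$.

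First I reduce to consecutive steps. Since $\OO_{s,n}(\ub)=\bigcup_{\vb\in\OO_{s,n-1}(\ub)}\OO_{s,n}(\vb)$, we have $E_n=\bigcup_{\vb\in E_{n-1}}\OO_{s,n}(\vb)$. The pair $(s,n-1)$ again satisfies the hypotheses of \Cref{M-W:local general}, and $E_{n-1}$ generates $\OO(M_{n-1}^*)$ by that theorem. So it suffices to prove, for every $t'\ge t$, that $E_{t'}$ is minimal if and only if $E_{t'+1}$ is minimal. By induction this gives all three equivalences; the implication (iii)$\Rightarrow$(ii) is trivial anyway.

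\emph{Non-minimality goes up.} Suppose some $\ub\in E_t$ lies in $\mndcl(E_t\setminus\{\ub\})$, say $\ub=\yb_1+\dots+\yb_m$ with $\yb_i\in E_t\setminus\{\ub\}$. Take any $\ub'\in\OO_{s,t+1}(\ub)$, with inserted entry $a\in[u_s,u_{s+1}]$. Exactly as in the proof of \Cref{M-W:local general}, \Cref{lem:ineq} gives integers $z_i\in[y_{i,s},y_{i,s+1}]$ with $\sum_i z_i=a$. Inserting $z_i$ into $\yb_i$ yields $\zb_i\in\OO_{s,t+1}(\yb_i)\subseteq E_{t+1}$ with $\ub'=\sum_i\zb_i$. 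Since $\pi_{t+1}(\zb_i)=\yb_i\neq\ub=\pi_{t+1}(\ub')$, each $\zb_i\neq\ub'$. Hence $\ub'\in\mndcl(E_{t+1}\setminus\{\ub'\})$ and $E_{t+1}$ is not minimal. This proves (ii)$\Rightarrow$(i) after iterating.

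\emph{Non-minimality goes down.} Suppose $\xb\in E_{t+1}$ satisfies $\xb=\zb_1+\dots+\zb_m$ with $\zb_i\in E_{t+1}\setminus\{\xb\}$. Applying $\pi=\pi_{t+1}$ gives $\ub=\pi(\xb)=\sum_i\pi(\zb_i)$ with $\pi(\zb_i)\in E_t$. If no $\pi(\zb_i)$ equals $\ub$, then $E_t$ is not minimal and we are done.

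The main obstacle is the remaining case, say $\pi(\zb_1)=\ub$. Then $\zb_1$ differs from $\xb$ only in the inserted entry, and $\sum_{j\ge2}\pi(\zb_j)=\nub$.
\begin{enumerate}
\item If $\OO(M_t^*)$ is positive, this forces $\pi(\zb_j)=\nub$ for $j\ge2$. By order-preservation each such $\zb_j$ is a multiple of $\eb_{s+1}$ whose entry lies between two zeros, so $\zb_j=\nub$. Then $\zb_1=\xb$, a contradiction.
\item In general, I would first try to exclude a nonzero unit part. The elements $\pi(\zb_j)$, $j\ge2$, are units of $\OO(M_t^*)$ lying in $E_t$, and the classification in \Cref{duality: local non-positive} makes such units very restricted (multiples of $\epb_t$, or the monoid $\Zk$-type cases that contain no nonzero ordered units except in $\ZZ\epb_t$). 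Using this, I would compare $\zb_1$ and $\xb$ directly and exhibit either $\ub$ or some element of $E_t$ as redundant.
\item Alternatively, I would rechoose the decomposition of $\xb$ so that no summand projects to $\ub$. Failing that, I would apply the second deletion (of coordinate $s$), as in the proof of \Cref{lem:irreducibility-presevation}, to obtain a decomposition of $\ub$ avoiding $\ub$.
\end{enumerate}
This down-step is where I expect the real work to lie.
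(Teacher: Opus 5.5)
\textbf{Verdict: genuine gap.} Your architecture is the paper's: delete the $(s+1)$-st coordinate to go down for (i)$\Rightarrow$(iii), lift through $\OO_{s,t+1}$ to go up for (ii)$\Rightarrow$(i), and reduce to one step at a time. Your up-step is correct. The paper lifts the summands $\vb_i$ arbitrarily and notes that $\sum a_i\vb_i'$ automatically lies in $\OO_{s,t+1}(\wb)$. You instead use the lemma on intermediate sums to show that every element of the fiber over $\ub$ is redundant; both work.

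The gap is in the down-step. Outside the positive case you only list possible strategies, so (i)$\Rightarrow$(iii) is not proved. This case cannot be skipped, because the proposition assumes no positivity. For instance, the zero chain gives $\OO(M_t^*)=\OO(\ZZ^t)$, and any $M_t\subseteq\Zk(\ZZ^t)$ gives $\pm\epb_t\in\OO(M_t^*)$. Your sentence ``say $\pi(\zb_1)=\ub$; then $\sum_{j\ge2}\pi(\zb_j)=\nub$'' also tacitly assumes that only one summand projects to $\ub$.

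The missing idea is elementary. It needs neither the classification of non-positive duals nor a second deletion:
\begin{itemize}
\item If $\yb$ is a unit of $\OO(M_t^*)$, then $\yb$ and $-\yb$ are both non-decreasing, so $\yb=c\epb_t$.
\item The inserted entry of an element of $\OO_{s,t+1}(c\epb_t)$ lies in $[c,c]$. Hence the only element of $E_{t+1}$ projecting to $c\epb_t$ is $c\epb_{t+1}$.
\end{itemize}

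Now finish the case analysis.
\begin{itemize}
\item If exactly one summand $\zb_1$ projects to $\ub$, the remaining $\pi(\zb_j)$ sum to $\nub$. They are therefore units $c_j\epb_t$ with $\sum c_j=0$. So $\zb_j=c_j\epb_{t+1}$, hence $\sum_{j\ge2}\zb_j=\nub$ and $\zb_1=\xb$, a contradiction.
\item If $k\ge2$ summands project to $\ub$, then $(k-1)\ub+\sum_{\text{rest}}\pi(\zb_j)=\nub$. This makes $\ub$ itself a unit, so $\ub=c\epb_t$. Then $\OO_{s,t+1}(\ub)=\{c\epb_{t+1}\}=\{\xb\}$, so those summands equal $\xb$, again a contradiction.
\end{itemize}
With this, your down-step is complete.

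The paper's own proof of (i)$\Rightarrow$(iii) performs the same projection. It simply asserts that $\hat\wb=\sum a_i\hat\vb_i$ contradicts minimality of $E_t$, without ruling out $\hat\vb_i=\hat\wb$. You were right to flag this case, and the unit observation above is what justifies that step.
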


	\begin{proof}
		By \Cref{M-W:local general}, each $E_n$ generates $\OO(M_n^*)$ for $n>t$. It therefore suffices to prove that minimality is preserved.
		
		(i) $\Rightarrow$ (iii): Assume $E_t$ is minimal. By induction on $n$, it is enough to show that $E_{t+1}$ is also minimal. Suppose not. Then there exist $\wb\in E_{t+1}$ and $\vb_1,\dots,\vb_k\in E_{t+1}$ such that
		$$\wb=\sum_{i=1}^k a_i\vb_i
		\quad\text{with } a_i\in\ZZ_{\ge0}.
		$$ 
		Deleting the $(s+1)$-st coordinate yields $\hat\wb=\sum_{i=1}^k a_i\hat\vb_i$, where by construction, $\hat\wb,\,\hat\vb_i\in E_{t}$ for all $i\in [k]$. This contradicts the minimality of $E_t$. Hence, $E_{t+1}$ is minimal.
		
		The implication (iii) $\Rightarrow$ (ii) is obvious.
		
		(ii) $\Rightarrow$ (i): It suffices to show that minimality of $E_{t+1}$ implies minimality of $E_t$. Suppose $E_t$ is not minimal. Then there exist $\wb\in E_t$ and $\vb_1,\dots,\vb_k\in E_t$ such that
		$$\wb=\sum_{i=1}^k a_i\vb_i
		\quad\text{with } a_i\in\ZZ_{\ge0}.
		$$  
		For each $i\in [k]$, choose $\vb_i'\in \OO_{s,t+1}(\vb_i)$, and set
		$$\wb'=\sum_{i=1}^k a_i\vb_i'.$$ 
		Then $\wb'\in \OO_{s,t+1}(\wb)\subseteq E_{t+1}$, while each $\vb_i'\in E_{t+1}$. This contradicts the minimality of $E_{t+1}$. Hence, $E_t$ is minimal.
	\end{proof}
	
	Applying the previous result to chains of positive monoids, we immediately obtain the following characterization of Hilbert bases for $\OO(M_n^*)$.
	
	\begin{corollary}
		\label{cor:ordered-Hilbert-bases}
		Let $\M=(M_{n})_{n\geq 1}$ be a non-trivial stabilizing $\Sym$-invariant chain of positive monoids with $\ind(\M)=r$. Let $s,t\in\NN$ satisfy $s \ge r$ and $t\ge s+r$. Suppose $E_{t}$ is a generating set of $\OO(M_{t}^*)$.
		For $n>t$, define
		$$E_n=\bigcup_{\ub\in E_{t}} \OO_{s,n}(\ub).$$ 
		Then the following are equivalent:
		\begin{enumerate}
			\item $E_t$ is the Hilbert basis of $\OO(M_{t}^*)$;
			\item $E_m$ is the Hilbert basis of $\OO(M_{m}^*)$ for some $m> t$;
			\item $E_n$ is the Hilbert basis of $\OO(M_{n}^*)$ for all $n\ge t$.
		\end{enumerate}
	\end{corollary}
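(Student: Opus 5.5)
The corollary should follow by applying \Cref{lem:minimal set} after checking its hypotheses, and then identifying ``minimal generating set'' with ``Hilbert basis'' once the monoids $\OO(M_n^*)$ are known to be positive.

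\textbf{Step 1: the hypotheses of \Cref{lem:minimal set} hold.} Let $G_r$ be any $\Sym(r)$-equivariant generating set of $M_r$, for instance $G_r=M_r$ itself. Then $\mu_+(G_r)\le r$ and $\mu_-(G_r)\le r$, since each is at most the number of coordinates. Hence $s\ge r\ge\mu_+(G_r)$. Also $t\ge s+r\ge s+\mu_-(G_r)$ and $t\ge s+r>r$, so $t\ge\max\{s+\mu_-(G_r),r\}$, and $s<t$ because $r\ge1$. Thus \Cref{M-W:local general} and \Cref{lem:minimal set} both apply to these $s$ and $t$.

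\textbf{Step 2: positivity.} By \Cref{duality:chain of positive monoids}, $M_n^*$ is positive for all $n>r_0$, where $r_0=\min\{n\mid M_n\ne\{\nub\}\}$. Since $\M$ is non-trivial and stabilizes with index $r$, we have $M_r\ne\{\nub\}$: if $M_r=\{\nub\}$, then stabilization would force $M_n=\{\nub\}$ for all $n\ge r$, and hence for all $n$, which contradicts non-triviality. So $r_0\le r<t$, and $M_n^*$ is positive for every $n\ge t$. The submonoid $\OO(M_n^*)$ is then also positive.

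\textbf{Step 3: Hilbert basis equals minimal generating set.} For a positive monoid $N$, every generating set contains $\irr(N)$. So if $\irr(N)$ generates $N$, then $\irr(N)$ is the unique minimal generating set, and a generating set is minimal exactly when it equals $\irr(N)$. For $N=\OO(M_n^*)$ we must check that $\irr(N)$ generates $N$. Here we reuse the argument of \Cref{prop:irreducible}: by \Cref{thm:local positive and non-positive}, $s(\ub)$ has a fixed strict sign on $M_n^*\setminus\{\nub\}$, hence also on $N\setminus\{\nub\}$. Induction on $|s(\ub)|$ then shows that every element of $N$ is a sum of irreducibles of $N$. (Alternatively, \Cref{M-W:local general} shows $N$ is generated by $E_n$, but the coordinate-sum argument suffices.) So ``Hilbert basis of $\OO(M_n^*)$'' means ``$\irr(\OO(M_n^*))$'', which is the same as ``minimal generating set of $\OO(M_n^*)$''.

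\textbf{Conclusion.} By Step 3, conditions (i)–(iii) of the corollary translate word for word into conditions (i)–(iii) of \Cref{lem:minimal set}, which are equivalent by that result. The only point needing care is Step 3: the notion of Hilbert basis was introduced for $\Sym(n)$-invariant monoids, while $\OO(M_n^*)$ is not invariant. The coordinate-sum induction handles this, because positivity of the ambient monoid is inherited by the submonoid.
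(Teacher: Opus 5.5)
Your proposal is correct and takes the same approach as the paper, which obtains the corollary directly from \Cref{lem:minimal set}: its hypotheses hold because $\mu_\pm(G_r)\le r$, and positivity of $M_n^*$ for $n\ge t$ (via \Cref{duality:chain of positive monoids}) identifies minimal generating sets of $\OO(M_n^*)$ with Hilbert bases. Your Step~3 goes slightly beyond the paper. Its coordinate-sum argument shows that $\irr(\OO(M_n^*))$ generates $\OO(M_n^*)$, which makes explicit why ``Hilbert basis'' is well defined for this submonoid even though it is not $\Sym(n)$-invariant and possibly not affine; the paper leaves this point implicit.
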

	
	\begin{example}
		Consider again the chain $\M$ from \Cref{example:M-W}(i). The set $E_4$ given there is the Hilbert basis of $\OO(M_{4}^*)$. For $n>4$, while the set $E_n=\bigcup_{\ub\in E_{4}} \OO_{2,n}(\ub)$ is not an equivariant Hilbert basis for $M_{n}^*$ (see \Cref{ex:too-big}), it is the Hilbert basis of $\OO(M_{n}^*)$ by \Cref{cor:ordered-Hilbert-bases}. 
	\end{example}

	%-------------------------------------------------------
	
\end{document}